\documentclass{article}
\usepackage[symbol]{footmisc}

\usepackage{amsfonts}
\usepackage{amsmath}
\usepackage{a4wide}
\usepackage{amssymb}
\usepackage{hyperref}
\usepackage{tikz-cd}
\usetikzlibrary{cd}

\newtheorem{lem}{Lemma}[section]
\newtheorem{theo}[lem]{Theorem}
\newtheorem{coro}[lem]{Corollary}
\newtheorem{propo}[lem]{Proposition}
\newtheorem{rema}[lem]{Remark}
\newtheorem{defi}[lem]{Definition}

\newenvironment{proof}{\paragraph*{Proof}}
{\par}

\newcommand\GL{{\mathrm{GL}}}

\newcommand\eps\varepsilon
\newcommand\ph\varphi

\newcommand\F{{\mathbb F}}
\newcommand\Q{{\mathbb Q}}

\newcommand\PPP{{\mathbb P}}

\newcommand\Z{{\mathbb Z}}

\title{Regular models of modular curves in prime level over $\Z_p^{\mathrm{ur}}$}


\author{Bas \stepcounter{footnote}Edixhoven\thanks{While this work was in progress, B.\,E. passed away, on January 16, 2022. 
He is deeply missed. (P.\,P.)}{\ } and Pierre Parent}

\setcounter{tocdepth}4

\hfuzz=3pt

\begin{document}

\maketitle 
\begin{abstract}
We give regular models for modular curves associated with (normalizer of) split and non-split Cartan subgroups of 
$\GL_2 (\F_p )$ (for $p$ any prime). We then compute the group of connected components of the fiber at~$p$ of the N\'eron 
model of their Jacobians. 

AMS 2020 Mathematics Subject Classification  11G18 (primary), 11G20, 14G35 (secondary). 
\end{abstract}

\tableofcontents

\section{Introduction}
\label{Introduction} 

For $p$ a prime number, semistable models for modular curves associated with all maximal subgroups of $\GL_2 (\Z /p\Z )$ were 
determined by the present authors in \cite{EdixP20}. Recall those maximal subgroups are either Borel, 
normalizer of split Cartan and normalizer of non-split Cartan, whose corresponding curves are generally denoted by $X_0 (p)$,
$X_{\mathrm{s}}^+ (p)$ and $X_{\mathrm{ns}}^+ (p)$, respectively; to these one must add three exceptional subgroups (and the 
relevant curves). In the present article, we propose to describe {\em regular} models over $\Z$ (or, actually, over the 
maximal unramified extension $\Z_p^{\mathrm{ur}}$ of $\Z_p$, for convenience). Such models are well-known for $X_0 (p)$ since 
at least the foundational work of Deligne and Rapoport (\cite{DR73}), and we decided not to consider the case of exceptional 
subgroups, which we feel to be of lesser arithmetic interest. We have therefore focussed on the case of curves 
associated with either split or non-split Cartan subgroups in characteristic $p$, and their normalizers. What was previously 
known in that context was the only 
case of the curves $X_{\mathrm{s}} (p)$ associated with split Cartan subgroups (but not their normalizer), which had been 
studied under the form $X_0 (p^2 )$ by the first-named author (in what is probably his first published work, coming out from 
his PhD thesis (see 
\cite{Edix89b}, \cite{Edix89})). Here we recover in that case the results of op. cit. with our different method, and 
we cover the other situations. We actually give two versions, for each case, of regular models: the first ones are 
the {\em minimal regular models with normal crossings}, which come out naturally from taking Galois quotients of our previous 
semistable models and resolving singularities; then after contracting, we obtain the {\em minimal regular models}. 
(Surprisingly enough, the latter appear to be 
totally reduced, whereas the Galois quotients we started with were not - anywhere, at least in the non-split case). A look at 
Figures~\ref{figureNSgrossier} and \ref{figureSgrossier} 
(models for $X_{\mathrm{ns}} (p)$ vs. $X_{\mathrm{s}} (p)$, respectively), or Figures~\ref{figureNSgrossier+} and 
\ref{figureS+grossier} ($X_{\mathrm{ns}}^+ (p)$ vs. $X_{\mathrm{s}}^+ (p)$), shows the morality of the situation, or at least 
the strong geometric parallelism between the split and non-split cases: the special 
fibers are essentially the same, except that reduced projective lines in the split case have been cut into small pieces in 
the non-split case. Those $\PPP^1$s are $j$-lines in the split situations, whereas their broken equivalents in the non-split 
cases have no modular interpretation, as they just result from blow-ups in the course of the regularization process.

   Our models finally allow to compute the component groups of the N\'eron models of the Jacobians of those curves. 
Here, another noteworthy feature is that the Jacobian over $\Z$ of the curve $X_{\mathrm{s}}^+ (p)$ 
(therefore also $X_0 (p^2 )/w_{p^2}$) has trivial component group: $J_{\mathrm{s}}^+ (p)$ has connected fibers over $\Z$, 
so the situation is similar to that of $X_1 (p)$, as was proven by B.~Conrad, the second-named 
author and W.~Stein in the article~\cite{CEdSt03} with that precise title.

  We feel modular curves are objects which are classical enough to justify the present work; knowing their regular models is 
for instance required to describe some other important features, such as their height (cf. the recent works~\cite{BBC20} or 
\cite{BMC22}). But as in the case of the semistable 
article \cite{EdixP20}, we have also been led by diophantine motivations. Indeed, when working out the recent quadratic 
Chabauty method for modular curves as developped by J. Balakrishnan and her coauthors (see e.g. \cite{BM17}, \cite{BM21}, 
\cite{BD23}), one needs to have semistable models at one's disposal. On 
the other hand, the geometric version of quadratic Chabauty, developped by the first-named author and G. Lido (see 
\cite{EdLi22}) requires regular models. In particular, one needs to know the exponent of the component groups of the 
N\'eron models of the Jacobians (in order to map every component of the curve to the Jacobian's neutral component: check the 
$m$ in Section~2 of \cite{EdLi22}). In the case of $X_{\mathrm{ns}}^+ (p)$, we find it to be just $(p-1)$ (see Proposition~
\ref{componentsXns+}). Beyond that numerical datum, it seems that the complete description of our regular models shall in 
fact be useful for modular versions of non-abelian Chabauty (see forthcoming work by G.~Lido et al.).

\medskip

{\bf Note from the second-named author.}
While that text was in progress we learnt, in the last weeks of 2021, of the illness of the first-named author. Bas Edixhoven 
passed away on January 16, 2022, a few months before his 60th birthday. The present work was ripe enough, in the summer 
before, for Bas to give a talk about it in Oberwolfach (July 22, 2021); yet it was immature enough for him to mention then 
that ``{\em we [were] not yet happy with all proofs/computations}\footnote{Handwritten notes for his lecture (private 
communication).}''. I (P.\,P.) have tried to complete the writing-up and make (me) happy enough with it. 
However, as a colleague and friend kindly put, ``{\em  Bas had his own standards}'', which would have been ridiculous to 
mimic. Not only do I therefore claim all mistakes that could remain, but I also apologize to those who could feel 
disappointed with the style. Beste Bas, bedankt, bedankt voor alles.

\medskip  
 
{\bf Acknowledgements.} The second-named author would like to thank Bill Allombert for his advices on safety checks with 
PARI/GP, Qing Liu and Dino Lorenzini for helpful discussions on regular models, and Reinie Ern\'e for her help to improve the 
presentation of this article. Thanks are also due to the referee for his acute reading and for suggesting nice examples which 
illustrate and corroborate our computations. This work benefited from the financial support of ANR-20-CE40-0003 Jinvariant. 

\medskip

{\bf Notation.} Through all the text, we safely assume that our primes $p$ are greater than $5$, unless explicitely stated 
otherwise. (Note by the way that for $p=2, 3$ all our modular curves have genus $0$, so there is no question about regular 
models.) The notation $\Z_{p^2}$ stands for the ring of integers of the unique non-ramified quadratic extension $\Q_{p^2}$ of 
$\Q_p$ (Witt vectors of $\F_{p^2}$), and $\Z_p^{\mathrm{ur}}$ for the ring of integers of the maximal unramified extension 
$\Q_p^{\mathrm{ur}}$ of $\Q_p$ (Witt vectors of $\overline{\F}_p$).

\section{Some structural results on $X(p)$}
\label{lesStructuraux}

All through this text, we freely use results displayed in~\cite{EdixP20}, to which we refer for details about notation or definitions. In this section we sum up and develop some of the basics of op. cit.

\bigskip

Fix once and for all a prime number $p(>3)$. 
Consider some representable finite \'etale moduli problem ${\cal P}$ over $({\mathrm{Ell}})_{\Z_p}$. The moduli problem 
$({\cal P},[\Gamma (p)])$ classifies triples $(E/S,\mathrm{a},\phi)$ for $S$ a $\Z_p$-scheme, $E/S$ an elliptic curve, 
$\mathrm{a}\in{\cal P} (E/S)$ and $\phi\in[\Gamma (p)]
(E/S)$ (in the sense of \cite{KM85}). Katz--Mazur's theorems about $\Gamma(p)$-structures (\cite{KM85}, Theorems~3.6.0, 
5.1.1 and 10.9.1) assert that
$({\cal P},[\Gamma (p)])$ is representable by a regular $\Z_p$-scheme ${\cal M} ({\cal P} ,[\Gamma (p)])$, having a 
compactification we denote by $\overline{{\cal M}} ({\cal P} ,[\Gamma (p)])$. The existence of Weil's pairing $e_p (\cdot ,
\cdot )$ shows that the morphism $\overline{\cal M} ({\cal P} ,[\Gamma (p)]) \to {\mathrm{Spec}} (\Z_p)$ factorizes through 
${\mathrm{Spec}}(\Z_p[\zeta_p])$, with $\zeta_p$ some primitive $p^{\mathrm{th}}$ root of unity. For all integers $i$ 
in $\{ 1,\dots ,p-1 \}$, set 
\[
X_i :=\overline{\cal M} ({\cal P} ,[\Gamma (p)^{\zeta_p^i{\mathrm{-can}} } ])
\]
for the sub-moduli problem over $({\mathrm{Ell}} )_{\Z_p[\zeta_p ]}$ representing triples
\[
  (E/S/\Z_p[\zeta_p],\mathrm{a}, \phi) \quad \text{such that}
  \quad e_p (\phi (1,0) ,\phi (0,1)) =\zeta_p^i\,.
\]  
The obvious morphism
$$
\coprod_{i\in \F_p^*} X_i \to \overline{{\cal M}} ({\cal P} ,[\Gamma (p)])_{\Z_p[\zeta_p ]}
$$
induces, by normalization, an isomorphism of schemes over
$\Z_p[\zeta_p]$:
\begin{eqnarray}
\label{X_i}
\coprod_{i\in \F_p^*} X_i \stackrel{\sim}{\longrightarrow} 
\overline{{\cal M}} ({\cal P} ,[\Gamma (p)])^\sim_{\Z_p[\zeta_p ]}\,,
\end{eqnarray}
where ${{\cal M}} ({\cal P} ,[\Gamma (p)])^\sim_{\Z_p[\zeta_p ]}\to \overline{{\cal M}} ({\cal P} ,
[\Gamma (p)])_{\Z_p[\zeta_p ]}$ is the normalization. The triviality of $p^{\mathrm{th}}$ roots  
of unity in characteristic~$p$ shows that, after the base change
$\Z_p[\zeta_p]\to\F_p$, the $X_{i,\F_p}$ are not only isomorphic to
each other but actually equal. Moreover, the modular interpretation of
a $\Gamma (p)$-structure $\phi \colon (\Z /p\Z )^2 \to E(k)$, in the
generic case of an ordinary elliptic curve $E$ over a field $k$ of
characteristic $p$, amounts to choosing some line $L$ in $(\Z /p\Z
)^2$ which plays the role of ${\mathrm{Ker}} (\phi )$, then some point
$P$ in $E(k)$ which defines the induced isomorphism $(\Z /p\Z )^2 /L
\stackrel{\sim}{\longrightarrow} E[p](k)$. More precisely, we have:
\begin{theo}
\label{KatzMazur1}
{\bf (Katz--Mazur \cite{KM85}, 13.7.6).} Each curve $X_{i,\F_p}$
obtained from $X_i$ over $\Z_p[\zeta_p]$ via $\Z_p[\zeta_p]\to\F_p$, is
the disjoint union, with crossings at the supersingular points, of
$p+1$ copies of the $\overline{\cal M} ({\cal P})$-schemes
$\overline{\cal M}({\cal P}, [{\mathrm{ExIg}} (p,1 )])$
(cf. Figure~\ref{FigureX_i}). 
We label those Igusa schemes ${\mathrm{Ig}}_{i,L}$ for $(i,L)$ running through $\F_p^\times \times
\PPP^1 (\F_p )$.
\end{theo}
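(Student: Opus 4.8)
The plan is to reduce the statement to an explicit analysis of Drinfeld $[\Gamma(p)]$-structures directly in characteristic $p$, after the base change $\Z_p[\zeta_p]\to\F_p$. Since $\mu_p$ is infinitesimal over $\F_p$, the pairing values $e_p(\phi(1,0),\phi(0,1))=\zeta_p^i$ all collapse to $1$, which is exactly why the $X_{i,\F_p}$ become equal; so it suffices to describe a single $X_{i,\F_p}$ and to record the index $i$ only at the end, through the labelling $\mathrm{Ig}_{i,L}$. I would then split the analysis according to the ordinary and the supersingular loci of the $j$-line.

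On the ordinary locus, I would use the connected--\'etale sequence of the $p$-torsion,
\[
0 \to E[p]^{0} \to E[p] \to E[p]^{\mathrm{et}} \to 0,
\]
with $E[p]^{0}\cong\mu_p$ multiplicative and $E[p]^{\mathrm{et}}\cong\Z/p\Z$ \'etale (\'etale-locally on the base). A $[\Gamma(p)]$-structure $\phi\colon(\Z/p\Z)^2\to E[p]$ induces, after passage to the \'etale quotient, a surjection $(\Z/p\Z)^2\twoheadrightarrow E[p]^{\mathrm{et}}$ whose kernel is a line $L\in\PPP^1(\F_p)$. This already decomposes the ordinary locus into the $p+1$ pieces indexed by $L$. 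With $L$ fixed, the data that remains --- a generator of the quotient $(\Z/p\Z)^2/L$ mapping isomorphically to $E[p]^{\mathrm{et}}$, together with the trivialization on the multiplicative side forced by the full-set-of-sections condition --- is precisely an $[\mathrm{ExIg}(p,1)]$-structure. Hence over the ordinary part each piece is a copy of $\overline{\cal M}({\cal P},[\mathrm{ExIg}(p,1)])$, and these are the components I would call $\mathrm{Ig}_{i,L}$.

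At a supersingular point the formal group of $E$ has height $2$, so $E[p]$ is connected and its \'etale quotient vanishes; the line $L$ is then undefined, and every one of the $p+1$ components must pass through each such point. To understand how they meet, I would pass to the complete local ring at a supersingular point and describe $[\Gamma(p)]$-structures on the universal deformation of a height-$2$ formal group via Lubin--Tate / Drinfeld deformation theory; this exhibits the $p+1$ branches and shows that they cross there. Assembling the ordinary-locus description with this local picture yields the disjoint union, with crossings at the supersingular points, of $p+1$ copies of $\overline{\cal M}({\cal P},[\mathrm{ExIg}(p,1)])$.

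The main obstacle is exactly this local analysis at the supersingular points: one must control Drinfeld level structures on a one-parameter deformation of a height-$2$ formal group and verify that the closures of the $p+1$ ordinary components come together precisely as crossing branches, with the expected local multiplicities. This is the technical heart of the matter and is where the genuine input of Drinfeld's theory of level structures enters; in practice I would invoke the computation of Katz--Mazur's Chapter~13 for the crossing behaviour, the ordinary-locus decomposition above being essentially formal.
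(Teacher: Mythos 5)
The paper offers no proof of this statement at all: it is quoted directly as Theorem~13.7.6 of Katz--Mazur \cite{KM85}, and your sketch is precisely a reconstruction of Katz--Mazur's own argument (ordinary-locus decomposition via the connected--\'etale sequence, with the $p+1$ branches indexed by the kernel line $L$, and crossings at the supersingular points analyzed through Drinfeld level structures on deformations of height-$2$ formal groups), with the technical core delegated to KM Chapter~13 exactly as the paper delegates the entire statement. Your proposal is therefore correct and follows essentially the same route; the only point worth flagging is that identifying the residual ordinary-locus data as an $[{\mathrm{ExIg}}(p,1)]$-structure (rather than a plain Igusa structure) is itself a nontrivial piece of Katz--Mazur's Chapters~12--13, not a formal consequence of the connected--\'etale sequence.
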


Making that model go through the semistabilization process of~\cite{EdixP20}, we do a base change and 
perform a few blow-ups to obtain a semistable model over $\Z_p^{\mathrm{ur}} [p^{1/(p^2 -1)}]$, with special fiber as 
represented in Figure~\ref{FigureX_i} (cf Section~2.2 of op. cit.). In particular, 
the $X_i$ now have two kinds of parts: the vertical (or Igusa) ones, which already showed up in Katz--Mazur model, 
and the new horizontal (Drinfeld) ones, with projective models
\begin{eqnarray}
\label{laDrinfeldOriginale}
\alpha^p \beta -\alpha \beta^p =z^{p+1} .
\end{eqnarray}
%


\begin{figure}
\begin{center}
\scalebox{1}{
\begin{picture}(180,250)(50,-80)


\put(52,159){$p+1$ (vertical) copies of}
\put(61,147){$\overline{\cal M}({\cal P}, [{\mathrm{ExIg}} (p,1 )])$}

\put(40,150){\vector(-2,-1){20}}

\put(175,155){\vector(3,-2){20}}

 
\qbezier(-30,135)(45,110)(10,90)
\qbezier(10,90)(-20,75)(10,60)
\qbezier(10,60)(40,45)(10,30)
\qbezier(10,30)(0,25)(-5,20)

\qbezier(50,135)(-25,110)(10,90)
\qbezier(10,90)(40,75)(10,60)
\qbezier(10,60)(-20,45)(10,30)
\qbezier(10,30)(20,25)(25,20)

\put(0,130){$\cdot$}
\put(9,130){$\cdot$}
\put(18,130){$\cdot$}

\put(4,101){$\cdots$}

\put(5,74){$\cdots$}

\put(5,44){$\cdots$}

 
\qbezier(-30,130)(55,110)(10,90)
\qbezier(10,90)(-30,75)(10,60)
\qbezier(10,60)(50,45)(10,30)
\qbezier(10,30)(-2,25)(-9,20)

\qbezier(50,130)(-35,110)(10,90)
\qbezier(10,90)(50,75)(10,60)
\qbezier(10,60)(-30,45)(10,30)
\qbezier(10,30)(22,25)(29,20)


\qbezier(-30,145)(36,110)(10,90)
\qbezier(10,90)(-10,75)(10,60)
\qbezier(10,60)(30,45)(10,30)
\qbezier(10,30)(2,25)(-1,20)

\qbezier(50,145)(-16,110)(10,90)
\qbezier(10,90)(30,75)(10,60)
\qbezier(10,60)(-10,45)(10,30)
\qbezier(10,30)(18,25)(21,20)



\qbezier(10,-25)(0,-20)(-5,-15)
\qbezier(-30,-70)(45,-45)(10,-25)

\qbezier(10,-25)(20,-20)(25,-15)
\qbezier(50,-70)(-25,-45)(10,-25)


\qbezier(10,-25)(-2,-20)(-9,-15)
\qbezier(-30,-65)(55,-45)(10,-25)

\qbezier(10,-25)(22,-20)(29,-15)
\qbezier(50,-65)(-35,-45)(10,-25)


\qbezier(10,-25)(2,-20)(-1,-15)
\qbezier(-30,-78)(37,-45)(10,-25)

\qbezier(10,-25)(18,-20)(21,-15)
\qbezier(50,-75)(-15,-45)(10,-25)

\put(5,-40){$\cdots$}

\put(0,-70){$\cdot$}
\put(9,-70){$\cdot$}
\put(18,-70){$\cdot$}


\put(29,10){$\cdot$}
\put(29,2){$\cdot$}
\put(29,-6){$\cdot$}

\put(-12,10){$\cdot$}
\put(-12,2){$\cdot$}
\put(-12,-6){$\cdot$}


\put(-70,3){supersingular points}

\put(-40,15){\vector(1,2){18}} 
\put(-40,13){\vector(2,1){19}}
\put(-40,0){\vector(1,-1){20}}

\put(-40,24){\vector(1,3){24}}

\put(-21,63){$\cdot$}
\put(-21,60){$\cdot$}
\put(-21,57){$\cdot$}



%

\put(90,37){\vector(-1,0){39}}


\qbezier(105,115)(215,110)(305,117)

\put(322,116){$D_{i,s}$}

\put(195,120){$\cdot$}
\put(215,119){$\cdot$}
\put(230,120){$\cdot$}

\qbezier(103,92)(215,90)(305,97)

\put(323,95){$D_{i,s}$}

\put(195,96){$\cdot$}
\put(215,96){$\cdot$}
\put(230,97){$\cdot$}

\qbezier(104,62)(215,60)(305,68)

\put(195,66){$\cdot$}
\put(215,66){$\cdot$}
\put(230,67){$\cdot$}

\qbezier(103,32)(215,32)(305,38)

\put(195,36){$\cdot$}
\put(215,36){$\cdot$}
\put(230,37){$\cdot$}

\put(323,67){$\cdot$}
\put(322,47){$\cdot$}
\put(321,27){$\cdot$}
\put(322,7){$\cdot$}
\put(323,-16){$\cdot$}


\qbezier(103,-26)(215,-29)(305,-26)

\put(195,-22){$\cdot$}
\put(215,-23){$\cdot$}
\put(230,-23){$\cdot$}

\qbezier(104,-53)(215,-54)(305,-51)

\put(320,-53){$D_{i,s}$}
\put(195,-49){$\cdot$}
\put(215,-49){$\cdot$}
\put(230,-50){$\cdot$}


\qbezier(130,143)(135,0)(125,-80)

\put(122,13){$\cdot$}
\put(122,2){$\cdot$}
\put(121,-9){$\cdot$}

\qbezier(160,142)(165,0)(158,-82)

\put(152,13){$\cdot$}
\put(152,2){$\cdot$}
\put(151,-9){$\cdot$}

\qbezier(180,145)(184,0)(182,-79) 

\put(174,13){$\cdot$}
\put(174,2){$\cdot$}
\put(173,-9){$\cdot$}


\qbezier(245,145)(242,0)(252,-80) 
 
\put(249,13){$\cdot$}
\put(249,2){$\cdot$}
\put(250,-9){$\cdot$}

\qbezier(265,144)(261,0)(272,-80) 

\put(269,13){$\cdot$}
\put(269,2){$\cdot$}
\put(270,-9){$\cdot$}

\qbezier(285,145)(279,0)(292,-80)

\put(289,13){$\cdot$}
\put(289,2){$\cdot$}
\put(290,-9){$\cdot$}


\end{picture}}
\end{center}
\caption{Katz--Mazur and stable fibers of ${X}_i$}
\label{FigureX_i}
\end{figure}
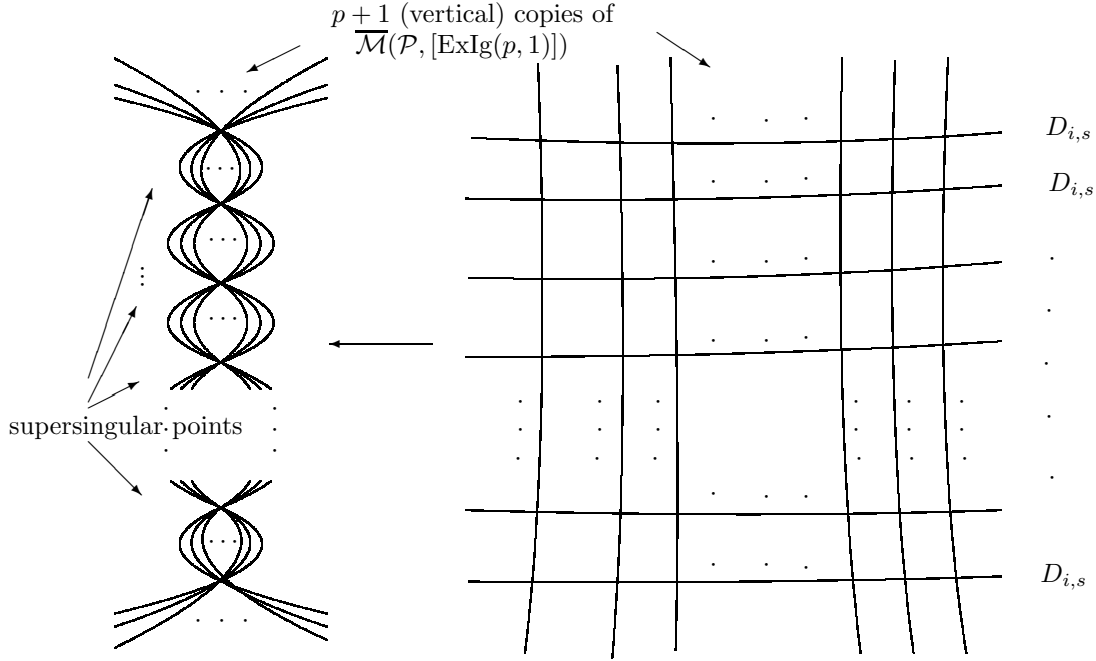

The Galois group ${\mathrm{Gal}} (\Q_{p^2} (p^{1/(p^2 -1)} )/
\Q_{p^2} )$ will be identified with $\mu_{p^2 -1} (\overline{\Q} )$ via the map
$$
\sigma \mapsto \frac{\sigma (p^{1/(p^2 -1)})}{p^{1/(p^2 -1)}} =:\zeta ,
$$
and we will also identify that group with $\mu_{p^2 -1} (\F_{p^2} )\simeq \F_{p^2}^*$, for which we choose a generator $u$. 
It acts on $\overline{{\cal M}} ({\cal P} ,[\Gamma (p)])^\sim_{\Z_p[\zeta_p ]}$ (or more precisely on each $X_i$).

   On each Drinfeld part $D_{i,s}$, with affine model $\alpha^p \beta -\alpha \beta^p =1$, the subgroup of 
${p+1}^{\mathrm{st}}$-roots of unity in $\F_{p^2}^*$ acts by $v(:= u^{p-1} )\colon (\alpha ,\beta )\mapsto (v^{-1} 
\alpha ,v^{-1} \beta)$. On the projective model $\alpha^p \beta -\alpha \beta^p =z^{p+1}$, that Galois action extends 
uniquely to
$$
(\alpha ,\beta ,z )\mapsto (v^{-1} \alpha ,v^{-1} \beta ,z),
$$  
and its fixed points are the $p+1$ points at infinity $\{ (1,0,0) ,(a,1, 0), a\in \F_p  \}$. 

\begin{rema}
\label{al'infini}
{\rm Those fixed points (at infinity) under Galois on the Drinfeld components 
are precisely the intersection points of the $D_{i,s}$ with Igusa parts (on which, on their side, the same Galois action of 
${p+1}^{\mathrm{st}}$-roots of unity is trivial).}  
\end{rema}

\medskip 

On the other hand, $\overline{{\cal M}} ({\cal P} ,[\Gamma (p)])^\sim_{\Z_p^{\mathrm{ur}} [p^{1/(p^2 -1)}]}$ is also endowed with 
a modular action of $\GL_2 (\F_p )$, also described in Section~2 of~\cite{EdixP20}. It will actually be more convenient to 
consider the action of the full group $\F_{p^2}^* \times \GL_2 (\F_p )$.
\begin{propo}
\label{stabilisateur}
The action of $\F_{p^2}^* \times \GL_2 (\F_p )$ on a given Igusa part ${\mathrm{Ig}}_{i,P}$ has stabilizer 
$$
\{(u,g), N(u):=u^{p+1} =\det (g), g\cdot P=P\} ,
$$
which acts on ${\mathrm{Ig}}_{i,P}$ by the diamond operator $\langle u^{p+1} \chi_P (g^{-1} ) \rangle_p$, for 
$\chi_P$ the character on the Borel subgroup of $\GL_2 (\F_p )$ defined by the action on the line $P$. 

   On the Drinfeld parts $D_{i,s}$ of the special fiber of $\overline{{\cal M}} ({\cal P} ,[\Gamma (p)])^\sim$, the action of 
$\F_{p^2}^* \times \GL_2 (\F_p )$ has stabilizer $\Sigma_0 :=\{(u,g), N(u)=u^{p+1}=\det (g)\}$. On each $D_{i,s}$, this 
stabilizer acts like
$$
(u,\left( {a\atop c} \ {b\atop d}\right) ) (\alpha ,\beta )=u^{-1} (a\alpha +c\beta ,b\alpha +d\beta ).
$$
\end{propo}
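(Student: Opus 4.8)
The plan is to reduce everything to the induced \emph{permutation} action of $\F_{p^2}^*\times\GL_2(\F_p)$ on the finite set of components together with the \emph{residual} automorphism of each fixed component, treating the two factors separately and then combining. First I would record how each factor acts on the labelling set $\F_p^\times\times\PPP^1(\F_p)$ of the Igusa parts. In the convention of \cite{EdixP20} the modular $g$ acts on a $\Gamma(p)$-structure $\phi$ by precomposition with $g^{-1}$; since, by the interpretation recalled before Theorem~\ref{KatzMazur1}, the invariant $P$ is the kernel line of the reduction of $\phi$ to the étale quotient, the factor $g$ sends $P\mapsto gP$. Bilinearity and alternation of Weil's pairing give $e_p(\phi(g^{-1}e_1),\phi(g^{-1}e_2))=e_p(\phi(e_1),\phi(e_2))^{\det(g)^{-1}}$, so $g$ multiplies the index $i$ by $\det(g)^{-1}$. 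The Galois factor $u$ acts on $\zeta_p$ through the cyclotomic character, and since the subgroup of $\F_{p^2}^*=\mu_{p^2-1}$ fixing $\Q_{p^2}(\zeta_p)$ is exactly $\mu_{p+1}=\ker N$, that character is the norm $N(u)=u^{p+1}$; thus $u$ multiplies $i$ by $u^{p+1}$ and fixes $P$. Combining the two, $(u,g)$ fixes $(i,P)$ precisely when $g^{-1}P=P$ and $u^{p+1}=\det(g)$, which is the asserted stabilizer.

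Next I would identify the automorphism a stabilizing $(u,g)$ induces on ${\mathrm{Ig}}_{i,P}$ with a diamond operator. As $g$ preserves $P$, it acts on it by the scalar $\chi_P(g)$, so precomposition by $g^{-1}$ rescales the $\mu_p$-generator $\phi|_P$ by $\chi_P(g^{-1})$; hence the modular factor contributes the diamond $\langle\chi_P(g^{-1})\rangle_p$. The Galois factor scales $\zeta_p$, and therefore the Weil-pairing datum tying the $\mu_p$- and étale generators, by $N(u)=u^{p+1}$, contributing $\langle u^{p+1}\rangle_p$; on the subgroup $\mu_{p+1}$ this is trivial, in accordance with Remark~\ref{al'infini}. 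Multiplying the two contributions yields the stated operator $\langle u^{p+1}\chi_P(g^{-1})\rangle_p$.

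For the Drinfeld parts I would argue by semi-invariance of the Drinfeld form $F(\alpha,\beta)=\alpha^p\beta-\alpha\beta^p$. The modular action on the coordinates of \cite{EdixP20} reads $(\alpha,\beta)\mapsto(\alpha,\beta)g$, and the Galois element $u$ rescales $(\alpha,\beta)$ by $u^{-1}$, extending the $v^{-1}$-scaling recorded above for $v\in\mu_{p+1}$. A direct computation in characteristic $p$, using $a^p=a$ for $a\in\F_p$ together with the Frobenius, gives $F((\alpha,\beta)g)=\det(g)\,F(\alpha,\beta)$, while $F(u^{-1}\alpha,u^{-1}\beta)=u^{-(p+1)}F(\alpha,\beta)$. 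Hence $(u,g)$ multiplies $F$ by $u^{-(p+1)}\det(g)$, so it preserves the affine model $F=1$ of $D_{i,s}$ exactly when $u^{p+1}=\det(g)$, i.e.\ $\Sigma_0$; the supersingular point $s$ imposes no further condition, since $\GL_2(\F_p)$ operates only on level structures. Reading off the formula then gives $(u,g)(\alpha,\beta)=u^{-1}(a\alpha+c\beta,b\alpha+d\beta)$, as claimed.

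The cleanest and most decisive step is this Drinfeld computation, whose outcome $F\mapsto u^{-(p+1)}\det(g)\,F$ simultaneously pins down the stabilizer and the explicit action. I expect the \textbf{main obstacle} to be the Igusa diamond identification: one must verify, against the Katz--Mazur normalizations of Theorem~\ref{KatzMazur1} and the conventions of \cite{EdixP20}, that the Galois contribution is exactly $u^{p+1}$ rather than its inverse or a Frobenius-twist, and that it combines multiplicatively with $\chi_P(g^{-1})$ without cancellation; fixing these normalizations once and for all is what makes the precise operator $\langle u^{p+1}\chi_P(g^{-1})\rangle_p$ come out correctly.
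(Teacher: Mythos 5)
Your overall strategy coincides with the paper's: compute the permutation action of the two factors on the component labels $(i,P)$, read off the stabilizer, then identify the residual automorphism of a stabilized component. The stabilizer computations are correct, and your Drinfeld computation via the semi-invariance $F((\alpha,\beta)g)=\det(g)\,F(\alpha,\beta)$, $F(u^{-1}\alpha,u^{-1}\beta)=u^{-(p+1)}F(\alpha,\beta)$ is a clean consistency check; note, however, that it presupposes the coordinate formula $(\alpha,\beta)\mapsto u^{-1}\bigl((\alpha,\beta)g\bigr)$, which is precisely the content the paper extracts from \cite{EdixP20} (namely $\alpha=\pi^{-1}x$, $\beta=\pi^{-1}y$, $u(\pi)=u\cdot\pi$, $(x,y)\mapsto(x,y)g$); your ``extension'' of the $v^{-1}$-scaling from $\mu_{p+1}$ to all of $\F_{p^2}^*$ is asserted rather than proved. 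Note also that your conventions are the inverses of the paper's (there $g$ acts by $\phi\mapsto\phi\circ g$, sending ${\mathrm{Ig}}_{i,P}$ to ${\mathrm{Ig}}_{i\det(g),g^{-1}P}$, and Galois shifts $i\mapsto iu^{-p-1}$); since you invert both factors simultaneously, the stabilizer is unaffected, but the precise formula for the residual action is not.

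The genuine gap is in the Igusa diamond step, exactly where you flagged your ``main obstacle''. You attribute to the Galois factor a diamond $\langle u^{p+1}\rangle_p$ (from rescaling $\zeta_p$) and to the modular factor a diamond $\langle\chi_P(g^{-1})\rangle_p$ (measured on the $\mu_p$-generator $\phi|_P$). But the Galois element acts trivially on the residue field, hence on characteristic-$p$ moduli: in the paper's description, $\overline{\gamma}(u)$ leaves the triple $(E,\mathrm{a},\phi)$ untouched and only twists the structural morphism by $\mathrm{Spec}(u^{p+1})$, so on the special fiber it is the identity on moduli and merely shifts the branch label $i$ (this is exactly what ``the $X_{i,\F_p}$ are actually equal'' buys). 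Galois therefore contributes no diamond at all; in the paper the whole operator comes from $g$ alone, acting on the \'etale line $(E[p]/P)_{\F_p}$ by the scalar $\chi_P(g)^{-1}\det(g)$, and the $u^{p+1}$ in the statement is just $\det(g)$ rewritten via the stabilizer relation. Your splitting reproduces the same product $\langle\chi_P(g^{-1})u^{p+1}\rangle_p=\langle\chi_P(g^{-1})\det(g)\rangle_p$, but the derivation does not cohere: carried out consistently in your own convention ($\phi\mapsto\phi\circ g^{-1}$), with the diamond normalized on the \'etale line as in the paper, the action comes out as $\chi_P(g)\det(g)^{-1}$, i.e.\ the \emph{inverse} of the asserted operator; repairing this requires exactly the unproven $\zeta_p$-renormalization, which would mean unwinding Katz--Mazur's $\zeta_p^i$-canonical (exotic Igusa) structures. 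As written, the argument cannot distinguish the operator from its inverse --- a distinction that matters, since the proposition is applied later (proof of Theorem~\ref{thNS}) to read $(v^2,H_{v^{p+1}})$ as the diamond $\langle v^{p+1}\rangle_p$.
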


\begin{proof} We know from~\cite{EdixP20}, Sections~2.2.3 and 2.2.4, that the two actions of $u\in \F_{p^2}^*$ and 
$g\in \GL_2 (\F_p )$ have modular interpretation:
$$
u \colon (E/S \stackrel{f}{\longrightarrow} {\mathrm{Spec}} (\Z [\zeta_p ]) ,{\mathrm{a}},\phi )_i 
\stackrel{\overline{\gamma}(u)}{\longmapsto} (E/S \stackrel{f'}{\longrightarrow} {\mathrm{Spec}} (\Z [\zeta_p ])  ,
{\mathrm{a}},\phi )_{iu^{-p-1}}
$$  
for $f' ={\mathrm{Spec}} (u^{p+1}) \circ f$ and
$$
g \colon (E/S,{\mathrm{a}},\phi )_i \mapsto (E/S,{\mathrm{a}},\phi\circ g)_{i\det (g)} ,
$$
respectively (where the index shift from $i$ to $i\det (g)$ on the line above comes from the properties of 
Weil's pairing). It follows that $u$ and $g$ induce isomorphisms:
\begin{eqnarray}
\label{gammabar}
\overline{\gamma} (u)\colon
\left\{
\begin{array}{rcl}
{\mathrm{Ig}}_{i,P} & \stackrel{\sim}{\longrightarrow} & {\mathrm{Ig}}_{iu^{-p-1},P} \\
D_{i,s} & \stackrel{\sim}{\longrightarrow} & D_{iu^{-p-1},s}  \hspace{0.1cm} 
\end{array}
\right. 
\end{eqnarray}
and
\begin{eqnarray}
r(g)\colon
\left\{
\begin{array}{rcl}
{\mathrm{Ig}}_{i,P} & \stackrel{\sim}{\longrightarrow} & {\mathrm{Ig}}_{i\det (g),g^{-1} P} \\
D_{i,s}& \stackrel{\sim}{\longrightarrow} & D_{i\det (g), s} ,
\end{array}
\right. 
\end{eqnarray}
respectively. The two actions commute, and the stabilizers of both vertical and horizontal parts obviously are as in 
the proposition. 

  For the Igusa parts, the modular interpretation shows that if $(u,g)$ is such that $u^{p+1} =\det (g)$, then on a basis
which is adapted to the line $P$ (which is to play the role of the kernel of Frobenius) such that $g(P)=P$, $g$ needs to act 
as $\left( {\chi_P (g) \atop 0}{\ast \atop \chi_P (g)^{-1} \det (g) } \right) =\left( {\chi_P (g) \atop 0}{\ast \atop \chi_P 
(g)^{-1} u^{p+1}} \right)$, whence the diamond action $\langle \chi_P (g)^{-1} u^{p+1} \rangle$ on the (generically) \'etale 
line $(E[p]/P)_{\F_p}$.

   As for the Drinfeld parts $D_{i,s}$, one knows from~\cite{EdixP20} that the coordinates $\alpha , \beta$ of op. cit. are 
such that $\alpha =\pi^{-1} x$, $\beta =\pi^{-1} y$ and $u(\pi )=u\cdot \pi$ for $\pi$ some uniformizer of $\Z_p^{\mathrm{ur}} [p^{1/(p^2 -1)}]$, e.g. $\pi$ some $p^{1/(p^2 -1)}$. (Recall that $x=Z(\phi (1,0))$ and $y=Z(\phi (0,1))$,
for $Z$ some parameter of the formal group of the supersingular elliptic curve $E_s$ underlying $D_{i,s}$.) Moreover
$(x,y)\mapsto (x,y)\cdot \left( {a \atop c} {b\atop d} \right)$ under $\left( {a \atop c} {b\atop d} 
\right)$ in $\GL_2 (\F_p )$, whence the action of $\F_{p^2}^* \times \GL_2 (\F_p )$ given in the proposition. $\Box$
\end{proof}

\section{Regular models for non-split Cartan structures}
\label{NonSplitCartan}

\subsection{Regular model for $\overline{{\cal M}} ({\cal P}, \Gamma_{\mathrm{ns}} (p))$}

We first compute a regular model for modular curves associated with a non-split Cartan group $\Gamma_{\mathrm{ns}} (p)
\subseteq \GL_2 (\F_p )$ ({\em not} its normalizer for now), endowed with some additional level structure ${\cal P}$.
 
\begin{theo} 
\label{thNS} 
Let $p>3$ be a prime, and let $[\Gamma_{\mathrm{ns}} (p)]$ be the moduli problem over $\Z [1/p]$ associated with 
$\Gamma_{\mathrm{ns}} (p)$.
Let ${\cal P}$ be a representable moduli problem, which is finite \'etale over $({\mathrm{Ell}})_{/\Z_p}$  
(for instance ${\cal P} =[\Gamma (N) ]$  for some $N\ge 3$ not divisible by $p$). 
Let $\overline{{\cal M}} ({\cal P}, \Gamma_{\mathrm{ns}} (p)) =\overline{{\cal M}} ({\cal P}, \Gamma (p))/ 
\Gamma_{\mathrm{ns}} (p)$ be the associated compactified fine moduli space. 

   The curve $\overline{{\cal M}} ({\cal P}, \Gamma_{\mathrm{ns}} (p))$ has a regular model over $\Z_p^{\mathrm{ur}}$ 
whose special fiber is made of one vertical Igusa part, from which a horizontal chain of Drinfeld 
components rises at each supersingular point.

\medskip 
 
  That vertical part is a copy of $\overline{{\cal M}} ({\cal P} )_{\overline{\F}_p} $, with multiplicity $p-1$.  

\medskip

  If $s_{\cal P}$ is the number of supersingular points of $\overline{{\cal M}} ({\cal P})
(\overline{\F}_p )$, the $s_{\cal P}$ horizontal chains of (Drinfeld) components are all copies of branches 
of the following shape: a rational curve $\PPP^1$ with multiplicity $p+1$, on which, at two points, stems 
another $\PPP^1$ with multiplicity $1$. All singular points in the special fiber are regular normal crossings; more 
precisely, the completed local rings at the geometric former intersection points (between Igusa and Drinfeld parts) are   
$$
\Z_p^{\mathrm{ur}} [[X,Y]]/(X^{p+1} \cdot Y^{p-1} -p )
$$
and those at the latter two intersection points (purely on each Drinfeld part), are 
$$
\Z_p^{\mathrm{ur}} [[X,Y]]/(X^{p+1} \cdot Y -p ).
$$
See Figure~\ref{figureNS} below. 
%

%
%
\end{theo}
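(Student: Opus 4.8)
The plan is to obtain the model by descending and quotienting the semistable model of \cite{EdixP20} and then resolving. Write $R:=\Z_p^{\mathrm{ur}}[p^{1/(p^2-1)}]$ and start from the semistable model $\coprod_{i\in\F_p^*}X_i^{\mathrm{ss}}$ over $R$, with its vertical Igusa components $\mathrm{Ig}_{i,L}$ and its horizontal Drinfeld components $D_{i,s}$ (affine equation \eqref{laDrinfeldOriginale}), together with the $\F_{p^2}^*\times\GL_2(\F_p)$-action of Proposition~\ref{stabilisateur}. I would realize $\overline{\cal M}({\cal P},\Gamma_{\mathrm{ns}}(p))$ over $\Z_p^{\mathrm{ur}}$ as the quotient of this model by $H:=\F_{p^2}^*\times\Gamma_{\mathrm{ns}}(p)$, where the first factor is $\gal(R/\Z_p^{\mathrm{ur}})\cong\mu_{p^2-1}\cong\F_{p^2}^*$ (Galois descent) and the second is the modular Cartan. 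Identifying $\Gamma_{\mathrm{ns}}(p)\cong\F_{p^2}^*$ with $\det$ the norm $N$, both factors move the index $i$ through $N$, which is onto $\F_p^*$. The quotient is normal over $\Z_p^{\mathrm{ur}}$; I would then resolve its singularities.

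The first step is the orbit/stabiliser bookkeeping for the components. Since $\Gamma_{\mathrm{ns}}(p)/\F_p^*$ acts simply transitively on $\PPP^1(\F_p)$ and $N$ is onto $\F_p^*$, all $p^2-1$ Igusa components $\mathrm{Ig}_{i,L}$ form a single $H$-orbit, and for each supersingular point $s$ the $p-1$ components $D_{i,s}$ form a single orbit; this yields one vertical part and $s_{\cal P}$ Drinfeld chains. The stabiliser of $\mathrm{Ig}_{i,L}$ is $\{(u,g):g\in\F_p^*,\ N(u)=g^2\}$ and acts on the component through the diamond operators $\langle g\rangle$ (Proposition~\ref{stabilisateur}), whose quotient of the Igusa cover is $\overline{\cal M}({\cal P})_{\overline{\F}_p}$; the subgroup acting trivially on the component is $\ker(N)\cong\mu_{p+1}$, which acts on the uniformiser by $\pi\mapsto\tau(u)\pi$ through $\mu_{p+1}$, so that $p=(\pi^{p+1})^{p-1}$ and the Igusa part has multiplicity $p-1$. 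For $D_{i,s}$ the stabiliser is $\{(u,w):N(u)=N(w)\}$, acting (Proposition~\ref{stabilisateur}) through $\mu_{p+1}\times\mu_{p+1}$; the subgroup acting trivially on the curve is the scalars $\{(w,w):w\in\F_p^*\}$, acting on $\pi$ through $\mu_{p-1}$, so that $p=(\pi^{p-1})^{p+1}$ and the main Drinfeld line gets multiplicity $p+1$.

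The heart of the argument is the local analysis on a single Drinfeld component. Diagonalising the Cartan over $\F_{p^2}$, in eigencoordinates $(\xi,\eta,z)$ the projective curve \eqref{laDrinfeldOriginale} becomes $\xi^{p+1}-\eta^{p+1}=z^{p+1}$ and the stabiliser acts by $(\xi,\eta)\mapsto(s\xi,t\eta)$ with $(s,t)\in\mu_{p+1}\times\mu_{p+1}$; the invariants $\xi^{p+1},\eta^{p+1},z$ and the relation show $D_{i,s}/(\mu_{p+1}\times\mu_{p+1})\cong\PPP^1$. The non-free orbits are exactly three: the $p+1$ points at infinity $z=0$ (stabiliser the diagonal $\mu_{p+1}$), which are the Igusa crossings of Remark~\ref{al'infini}, and the two interior orbits $\{\xi=0\}$ and $\{\eta=0\}$ (stabiliser a factor $\mu_{p+1}$). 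At the infinity orbit I would compute, from the node $ab=\pi$ of \cite{EdixP20} (with $a$ the Igusa direction scaled by $\mu_{p-1}$ through diamonds and $b$ the Drinfeld direction scaled by $\mu_{p+1}$), that the invariants $a^{p-1},b^{p+1}$ together with
\[ p=(ab)^{p^2-1}=(a^{p-1})^{p+1}(b^{p+1})^{p-1} \]
give the completed local ring $\Z_p^{\mathrm{ur}}[[X,Y]]/(X^{p+1}Y^{p-1}-p)$. At each interior orbit the quotient carries a cyclic singularity $\tfrac1{p+1}(1,1)$ (the diagonal $\mu_{p+1}$ on $(\pi^{p-1},\xi)$), resolved by a single blow-up; in the chart $\pi^{p-1}=\xi\,v$ the invariants $\xi^{p+1},v$ and $\xi^{p+1}v^{p+1}=p$ yield the exceptional $\PPP^1$ of multiplicity $1$ meeting the Drinfeld line at one point with local ring $\Z_p^{\mathrm{ur}}[[X,Y]]/(X^{p+1}Y-p)$. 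Finally I would note that each $\Z_p^{\mathrm{ur}}[[X,Y]]/(X^aY^b-p)$ with $a,b\ge 1$ is regular, since $X^aY^b\in\mathfrak m^2$ forces $p\equiv0$ in $\mathfrak m/\mathfrak m^2$, so all crossings are regular normal crossings, while the free orbits give smooth points.

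The main obstacle is precisely this local computation and the resolution of the $\tfrac1{p+1}(1,1)$ singularities: one must track how the Galois action on $\pi=p^{1/(p^2-1)}$ combines with the modular scalings, identify the correct stabilisers \emph{as acting on the local coordinates including the base direction}, and verify both that a single blow-up suffices and that it produces multiplicity-$1$ lines with exactly the stated local equations $X^{p+1}Y-p$ (interior) versus $X^{p+1}Y^{p-1}-p$ (Igusa crossing). Getting these exponents and multiplicities right, and confirming there are no further non-free orbits, is the delicate part; everything else is orbit counting and the ramification computations of the previous paragraph.
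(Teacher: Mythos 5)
Your proposal is correct and reaches all of the theorem's conclusions, but it executes the common strategy (quotient a semistable model of \cite{EdixP20}, then resolve) along a genuinely different route, in two respects. First, the paper never returns to the $\Gamma(p)$-level model: it starts from the semistable model of $\overline{\cal M}({\cal P},\Gamma_{\mathrm{ns}}(p))$ itself (Theorem~3.1 of \cite{EdixP20}, over $\Z_p^{\mathrm{ur}}[p^{2/(p^2-1)}]$), so the only group left to divide by is the Galois group $\F_{p^2}^*/\{\pm 1\}$, and explicit equations are available throughout (the Drinfeld parts are (\ref{DrinfeldXns}) with Galois acting by $U\mapsto U$, $V\mapsto u^{p-1}V$); you instead quotient the $X(p)$-model by $\F_{p^2}^*\times\Gamma_{\mathrm{ns}}(p)$ in one step, redoing the orbit/stabiliser bookkeeping from Proposition~\ref{stabilisateur}. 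Your counts are right (one Igusa orbit with inertia of order $p+1$, hence multiplicity $p-1$; for each supersingular $s$ one orbit of Drinfeld components with inertia the scalars $\F_p^*$, hence multiplicity $p+1$; exactly three non-free loci per Drinfeld curve). Second, at the two interior fixed points the paper forms the invariant ring $A=\Z_p^{\mathrm{ur}}[Y_1,\dots,Y_{p+1}]/(Y_1^k-p^{k-1}Y_k)$ \emph{downstairs} and blows it up, checking every chart $A_r$, whereas you blow up \emph{upstairs}, equivariantly, on the regular intermediate model over $\Z_p^{\mathrm{ur}}[p^{1/(p+1)}]$ and then quotient, observing that $\mu_{p+1}$ acts in each blow-up chart by a pseudo-reflection (it scales $x$ and fixes $v=\pi_1/x$, and conversely); both constructions give the minimal resolution of the $\frac{1}{p+1}(1,1)$ point, so the outputs agree, and your version trades the explicit chart computation for Serre's criterion. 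The one step where your route requires an argument that the paper's setup renders invisible is the Igusa--Drinfeld crossing: the identity $p=(a^{p-1})^{p+1}(b^{p+1})^{p-1}$ yields $\Z_p^{\mathrm{ur}}[[X,Y]]/(X^{p+1}Y^{p-1}-p)$ only if the point-stabiliser acts on $(a,b)$ through the \emph{full} group $\mu_{p-1}\times\mu_{p+1}$; were the image a proper subgroup, mixed invariants such as $a^{(p-1)/2}b^{(p+1)/2}$ would survive and the local ring would not be the stated one. This does hold, but you should say why: the stabiliser has order $p^2-1$, it acts faithfully on the completed local ring (any kernel would act trivially on all of the irreducible surface $X_i$, which is excluded by rigidity of ${\cal P}$ for the modular part and by the action on $\Q_p^{\mathrm{ur}}(\pi)$ for the Galois part), and both coordinate projections are surjective (total ramification of the Igusa cover at supersingular points, respectively the diagonal $\mu_{p+1}$ fixing the points at infinity of (\ref{laDrinfeldOriginale})); a subgroup of $\mu_{p-1}\times\mu_{p+1}$ of order $p^2-1$ with these properties is the whole group. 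This $\pm 1$-sensitive verification is exactly what the paper sidesteps by starting from the model of op.\ cit., in which $-1\in\Gamma_{\mathrm{ns}}(p)$ has already been divided out.
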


\begin{rema}
\label{bisrepetitas}
{\rm As in Remark~2.2 of~\cite{EdixP20}, we would have liked to call our ``vertical parts''  $\overline{{\cal M}} ({\cal P} 
)_{\overline{\F}_p} $ ``vertical components'', but were formally prevented from doing so because they may be not 
irreducible.} 
\end{rema}

\begin{figure}
\begin{center}
\begin{picture}(320,200)(-30,-80)


 
\qbezier(-50,60)(-35,62)(-22,65)
\qbezier(-50,62)(-35,66)(-22,65)

\qbezier(-22,65)(-2,69)(22,64)
\qbezier(-22,65)(-2,64)(22,64)

\qbezier(22,64)(33,64)(45,60)
\qbezier(22,64)(33,61)(45,58)

\qbezier(-50,61)(5,71)(45,59)

\put(-73,55){$D_s$}
\put(62,58){$p+1$}

\qbezier(-49,38)(-35,43)(-19,43)
\qbezier(-49,36)(-35,39)(-19,43)

\qbezier(-19,43)(0,49)(23,45)
\qbezier(-19,43)(0,44)(23,45)

\qbezier(23,45)(39,45)(46,41)
\qbezier(23,45)(32,42)(46,39)

\qbezier(-49,37)(5,52)(46,40)
\put(-73,35){$D_s$}
\put(62,35){$p+1$}

\qbezier(-48,-45)(-30,-41)(-18,-42)
\qbezier(-48,-47)(-28,-44)(-18,-42)

\qbezier(-18,-42)(9,-38)(35,-46)
\qbezier(-18,-42)(-0,-43)(35,-46)

\qbezier(35,-46)(45,-44)(59,-50)
\qbezier(35,-46)(45,-49)(59,-52)

\qbezier(-48,-46)(5,-36)(59,-51)
\put(-74,-45){$D_s$}
\put(67,-53){$p+1$}


\qbezier(0,105)(-5,0)(15,-70) 
\qbezier(1,105)(-4,0)(16,-70)
\qbezier(-1,105)(-6,0)(14,-70)

\put(-9,112){$p-1$}



\put(-20,-90){Galois quotient}


\put(-50,27){$\cdot$}
\put(-50,5){$\cdot$}
\put(-50,-17){$\cdot$}
\put(-50,-36){$\cdot$}

\put(50,27){$\cdot$}
\put(55,5){$\cdot$}
\put(57,-17){$\cdot$}
\put(60,-36){$\cdot$}


\put(150,6){\vector(-1,0){56}}

\put(107,-20){blow-up}


\qbezier(180,69)(245,76)(296,60)
\qbezier(180,68)(245,75)(295,59)
\qbezier(180,67)(245,74)(295,58)
\put(305,58){$D_s$}
\put(150,64){$p+1$}

\qbezier(182,36)(245,46)(295,31)
\qbezier(182,35)(245,45)(295,30)
\qbezier(182,34)(245,44)(295,29)
\put(305,27){$D_s$}
\put(150,32){$p+1$}

\qbezier(188,-44)(245,-34)(295,-44)
\qbezier(188,-45)(245,-35)(295,-45)
\qbezier(188,-46)(245,-36)(295,-46)
\put(305,-50){$D_s$}
\put(153,-49){$p+1$}

\put(222,27){$\cdot$}
\put(221,5){$\cdot$}
\put(222,-17){$\cdot$}
\put(224,-36){$\cdot$}


\qbezier(205,53)(205,65)(201,82)
\put(196,86){$1$}

\qbezier(202,23)(203,35)(198,52)
\put(192,54){$1$}

\qbezier(208,-63)(211,-45)(208,-22)
\put(202,-17){$1$}


\qbezier(275,53)(270,65)(271,82)
\put(266,86){$1$}

\qbezier(272,23)(266,35)(268,52)
\put(263,54){$1$}

\qbezier(275,-63)(263,-45)(268,-22)
\put(270,-17){$1$}


\put(232,112){$p-1$}

\qbezier(241,105)(236,0)(246,-70)
\qbezier(240,105)(235,0)(245,-70)
\qbezier(239,105)(234,0)(244,-70)

%


\put(215,-90){Regular model}


\end{picture}
\end{center}
\caption{Special fiber at $\overline{\F}_p$ of the regular model 
$\overline{{\cal M}} ({\cal P}, \Gamma_{\mathrm{ns}} (p))$}  
\label{figureNS}
\end{figure}

\begin{proof}
We start by considering the semistable model $\overline{{\cal M}} ({\cal P}, \Gamma_{\mathrm{ns}} (p))^{\mathrm{st}}$
over $\Z_p^{\mathrm{ur}} [\pi_0 ]$, for $\pi_0 :=p^{2/(p^2 -1)}$, 
given in Theorem~3.1 of~\cite{EdixP20}. So set $G := {\mathrm{Gal}} (\Q_p^{\mathrm{ur}} (\pi_0 )/
\Q_p^{\mathrm{ur}} ) \simeq \F_{p^2}^* /\{\pm 1\}$. 

\medskip

  First we determine the dual graph of the quotient arithmetic surface (and {\it not} its regularization yet). 
Proposition~\ref{stabilisateur} shows the Drinfeld components are stable under the action of Galois; as for the 
two Igusa parts ${\mathrm{Ig}}_i (p ,{\cal P}) \simeq \overline{\cal M} ({\cal P}, {\mathrm{Ig}}  (p)/\{\pm 
1\})_{\overline{\F}_p}$, $i=1$ or $d$, their stabilizer is the index $2$ subgroup of squares $\F_{p^2}^{*,2} /\{\pm 1\}$, 
and they are switched by the quotient $(\F_{p^2}^* /\{\pm 1\})/(\F_{p^2}^{*,2} /\{\pm 1\})$. 

\medskip

Now for the quotients of Igusa parts. The subgroup of Galois with order $(p+1)/2$, that is, $\mu_{p+1} (\F_{p^2}^* 
)/\{\pm 1\}$, in their stabilizer $(\F_{p^2}^{*,2} )/\{\pm 1\}$,  acts trivially on any of those Igusa parts, 
by Proposition~\ref{stabilisateur}. 
So taking the quotient by $\mu_{p+1} (\F_{p^2}^* )/\{\pm 1\}$ shows that over $\Z_p^{\mathrm{ur}} [p^{1/(p-1)}]$, 
$\overline{{\cal M}} ({\cal P}, \Gamma_{\mathrm{ns}} (p))$ has a special fiber with the
same Igusa parts $\overline{\cal M} ({\cal P}, {\mathrm{Ig}}  (p)/\{\pm 1\})_{\overline{\F}_p}$, with multiplicity
$1$. We then quotient out by the Galois group $\F_{p^2}^{*,2} /\mu_{p+1} (\F_{p^2} )$.
As non-zero homotheties belong to our Cartan subgroup, 
Proposition~\ref{stabilisateur} shows that the quotient of any of the two Igusa parts ${\mathrm{Ig}}_i (p ,
{\cal P})$, $i=1$ or $d$, of the semistable model (Theorem~3.1 of~\cite{EdixP20}) can be read as the quotient of 
$\overline{\cal M} ({\cal P}, {\mathrm{Ig}}  (p)/\{\pm 1\})_{\overline{\F}_p}$ by $\{ (v^2 , H_{v^{p+1}} ), 
v\in \F_{p^2}^* \}$, where for any $\lambda$ in $\F_p^*$, $H_{\lambda}$ is the homothety with ratio $\lambda$. 
Each such morphism $(v^2 , H_{v^{p+1}} )$ has modular interpretation 
\begin{eqnarray}
\label{diamond}
(E, \mathrm{a} ,P)\mapsto (E, \mathrm{a},\langle v^{p+1} \rangle_p P)
\end{eqnarray}
(Proposition~\ref{stabilisateur} again, and its proof). Therefore, ${\mathrm{Gal}} 
(\Q_p^{\mathrm{ur}} (p^{1/(p-1)} )/\Q_p^{\mathrm{ur}} (\sqrt{p}) ) \simeq\F_{p^2}^{*,2} /\mu_{p+1} (\F_{p^2} )$ 
acts generically freely on the ordinary locus of each Igusa part, with 
quotient a copy of $\overline{{\cal M}} ({\cal P})_{\overline{\F}_p}$. 
Looking at generic local rings of closed points on the arithmetic surface, we see that the Igusa parts occur with 
multiplicity $(p-1)/2$ in the closed fiber over $\Z_p^{\mathrm{ur}} [\sqrt{p}]$. 
Finally, the last step of order $2$, that is, taking the quotient under $\F_{p^2}^* /\F_{p^2}^{*,2}$, identifies the 
two Igusa parts, so that their multiplicity in the special fiber over $\Z_p^{\mathrm{ur}}$ is eventually $p-1$.

\medskip 

   Now we focus on Drinfeld components. Recall from Section 3.1 of~\cite{EdixP20} that one can choose parameters
$\tilde{\alpha}$, $\tilde{\beta}$ of the general Drinfeld component of the semistable $\overline{{\cal M}} ({\cal P} ,
[\Gamma (p)])^\sim$ such that it has model 
\begin{eqnarray}
\label{modeleDrX(p)}
\tilde{\alpha}^{p+1} -\tilde{\beta}^{p+1} =2A 
\end{eqnarray}
for $A=aN/2\ne 0$, with notation as in (18) in~\cite{EdixP20}. Identifying our non-split Cartan subgroup 
$\Gamma_{\mathrm{ns}} (p)$ mod $p$ with $\F_{p^2}^*$, one can choose a generator $\kappa$ of the elements with norm $1$ 
in the latter group such that  
%
$\kappa \colon (\tilde{\alpha}, \tilde{\beta} )\mapsto (\kappa^p \tilde{\alpha}, \kappa \tilde{\beta} )$. The 
coordinates $U:=\tilde{\alpha}^{p+1} -A$ and $V:=\tilde{\alpha} \tilde{\beta}$ are therefore clearly invariant under Galois
action, and as $[\overline{\F}_p ((\tilde{\alpha} ,\tilde{\beta})) \colon \overline{\F}_p ((U,V))]=p+1$, those $U$ and $V$
do generate the function field of the Drinfeld parts of the semistable $\overline{{\cal M}} ({\cal P}, \Gamma_{\mathrm{ns}} 
(p))$. Those Drinfeld parts therefore have models of shape 
\begin{eqnarray}
\label{DrinfeldXns}
U^2 =V^{p+1} +A^2  
\end{eqnarray}
(cf. (21) in~\cite{EdixP20}). Now Proposition~\ref{stabilisateur} above says that if $u$ belongs to $G={\mathrm{Gal}} 
(\Q_p^{\mathrm{ur}} (p^{2/(p^2 -1)} )/\Q_p^{\mathrm{ur}} )$, any $g$ in $\Gamma_{\mathrm{ns}} (p)$ with $N(u)=u^{p+1}\equiv 
\det g\mod p$ is such that $(u,g)$ stabilizes any Drinfeld component for $\overline{{\cal M}} ({\cal P} ,[\Gamma (p)])^\sim$. 
Therefore, the Galois action is induced, on the Drinfeld components $D_s$ of the quotient, by $u\colon (\tilde{\alpha} ,
\tilde{\beta} )\mapsto (u^{-1}g\cdot \tilde{\alpha} , u^{-1} g\cdot \tilde{\beta})$. In other words,  if $u$ is
a generator of $\F_{p^2}^*$, and $\kappa$ is in $\Gamma_{\mathrm{ns}} (p)$ with $\kappa (\tilde{\alpha} )=u^p 
\tilde{\alpha}$ and  $\kappa (\tilde{\beta})=u \tilde{\beta}$, then $u\circ \kappa (\tilde{\alpha} ,\tilde{\beta}) 
=(u^{p-1} \tilde{\alpha} ,\tilde{\beta})$. As for the Drinfeld parts of the semistable
$\overline{{\cal M}} ({\cal P}, \Gamma_{\mathrm{ns}} (p))$, the Galois action becomes 
\begin{eqnarray}
\label{actionDeGalois}
u\colon U\mapsto U, V\mapsto u^{p-1} V
\end{eqnarray}
on the parameters $U, V$ of the above model~(\ref{DrinfeldXns}).

  One therefore checks that the subgroup $\mu_{p -1} (\F_{p^2})  /\{\pm 1\} =\langle u^{p+1} \rangle  
/\{\pm 1\}$ of $\F_{p^2}^* /\{ \pm 1\}$ is the kernel of the Galois action on the Drinfeld parts in the 
fiber, so the special fiber of our descended curve over $\Z_p^{\mathrm{ur}} [p^{1/(p +1)}]$ has Drinfeld 
components with same model as above, and mutiplicity $1$. Then the quotient group $\F_{p^2}^* /\mu_{p -1} (
\F_{p})$, which is isomorphic to $\mu_{p+1} (\F_{p^2})$, acts generically freely on those Drinfeld part,
so the special fiber of the quotient modular curve over $\Z_p^{\mathrm{ur}}$ has Drinfeld components with
multiplicity $p+1$. Moreover, parameters for the Galois quotient of Drinfeld curves are now $X:=U$, $Y :=V^{p+1}$, so 
those quotients can be given equation
\begin{eqnarray}
\label{modelecon}
X^2 =Y +A^2 ;
\end{eqnarray}
that is, they are projective lines.

   As for intersection points between Drinfeld and Igusa components, their equation can be computed directly to be as in our 
theorem, which shows our curve is regular at those points (a fact which also follows from Serre's 
pseudo-reflection criterion; cf.~\cite{CEdSt03}, Theorem~2.3.9). The generic points on the descended Drinfeld 
components, on their side, have completed local rings of shape 
$$ 
\Z_p^{\mathrm{ur}} [[X,y]]/(y^{p+1} -p)
$$
hence are regular. However, regularity may fail at points of the Drinfeld curves coming from points which are fixed 
by Galois. Those are the points at infinity (which are exactly the intersection points with Igusa 
components - see Remark~\ref{al'infini} above), on one hand, which have just been discussed. And, on the 
other hand, they are the points at finite distance for which $V=0$ on the model~(\ref{DrinfeldXns}), i.e. the two points 
$(X,Y)=(\pm A,0)$ (on the model~(\ref{modelecon})) coming from  $(U,V)=(\pm A,0)$, which themselves come from the points on 
the model~(\ref{modeleDrX(p)})
$$
(\tilde{\alpha} ,\tilde{\beta} ) =(0, \zeta_{p+1}^k (-2A)^{\frac{1}{p+1}} ) \hspace{1cm} {\mathrm{and}} 
\hspace{1cm} (\tilde{\alpha} ,\tilde{\beta} ) =( \zeta_{p+1}^k 
(2A)^{\frac{1}{p+1}} ,0)
$$
(where $\zeta_{p+1}$ is some primitive $p+1^{\mathrm{st}}$-root of unity). Below those latter fixed points under Galois the 
situation demands blow-ups.

So set $S=\Z_p^{\mathrm{ur}} [\pi_1 ]$ for $\pi_1 =p^{1/(p +1)}$. 
At the two fixed points $(\pm A, 0)$ of~(\ref{DrinfeldXns}), $V$ 
can be chosen as a local parameter for the Drinfeld curve, on which Galois acts via 
$\lambda \colon V\mapsto \lambda V$ for $\lambda$ a generator of $\F_{p^2}^* /\F_p^*  \simeq \mu_{p+1} 
(\overline{\Q}_p )$. A straightforward adaptation of~\cite{CEdSt03}, Lemma~2.3.4 and its proof, shows that $V$ can 
be lifted to a parameter $x$ of the completed local rings $\Z_p^{\mathrm{ur}} [\pi_1 ][[x]]$ of our arithmetic 
surface at that point such that the action of Galois is given, for $\lambda$ in $\mu_{p+1}(\overline{\Q}_p )$, by 
\begin{eqnarray}
\label{laclassiquederegularisation}
\lambda \colon \pi_1 \mapsto \lambda \pi_1 , \hspace{1cm} x\mapsto \lambda x.
\end{eqnarray}
Therefore, the local ring of Galois invariants here has the same completion as 
$$
\bigoplus_{k\geq 0} \Z_p^{\mathrm{ur}} \, (\pi_1^{p+1-[k]} x^k )=\bigoplus_{k\geq 0} \Z_p^{\mathrm{ur}} p \pi_1^{-[k]} x^k 
$$ 
where $[k]$ denotes the integer within the range $\{1,p+1\}$ that is congruent to $k$ mod $p+1$. Setting 
$Y_k :=\pi_1^{p+1-k} x^k =p (x/\pi_1 )^k$, for $1\leq k\le p+1$, that ring can be written as
$$
A:=\Z_p^{\mathrm{ur}} [Y_1 ,\cdots , Y_{p+1}  ]/(Y_1^k -p^{k-1} Y_k )_{1\le k\le p+1} .
$$
The ring can be seen not to be regular at the maximal ideal ${\cal M}=(p, Y_1 , \dots ,Y_{p+1})$ (e.g. applying again
Serre's pseudo-reflections criterion to the ring $\Z_p^{\mathrm{ur}} [\pi_1 ][[x]]$
above), so we blow it up  
at $\cal M$. For $1\leq r\leq p+1$, we need a priori study $A_r :=A[ Y_r, p/Y_r, Y_k /Y_r ]_{1\le k\le p+1}$, 
that is, setting $Y_{k,r} :=Y_k /Y_r$ and ${\cal P}_r :=p/Y_r$,
$$
A_r =\Z_p^{\mathrm{ur}} [Y_{1,r} , Y_{2,r} ,\dots ,Y_{r-1,r} ,Y_r ,Y_{r+1,r} ,\dots ,Y_{p+1,r} , {\cal P}_r ].   
$$
We also define $A_0 :=A[Y_k /p ]_{1\le k\le p+1}$, so that, setting $Y_{k,0} :=Y_k /p$,
$$
A_0 =\Z_p^{\mathrm{ur}} [Y_{1,0}  , Y_{2,0} ,\dots ,Y_{p+1,0} ].
$$
As for the $A_r$ in the former set of rings one remarks that if $1<r<p+1$, one has $Y_{r+1,r} =x/\pi_1$, so 
that $Y_{k,r} =Y_{r+1,r}^{k-r}$ (respectively, $Y_{k,r} =Y_{r-1,r}^{r-k}$) if $k\geq r$ (respectively, $k\leq 
r$) and that ${\cal P}_r =Y_{r-1,r}^r$, $Y_r =pY_{r+1,r}^{r}$ and $Y_{r+1,r} \cdot Y_{r-1,r} =1$. 
So $A_r =\Z_p^{\mathrm{ur}} [Y_{r+1,r} ,  Y_{r+1,r}^{-1} ]$. One moreover checks that  $Y_{r+1,r} =Y_{r' +1,r'}$ for any 
$r, r'$ within the range $\{1,\dots ,p\}$, so that the $A_r$, $1< r< p+1$, are equal as subrings of the fraction 
field of $A$. When $r=1$, one still has $Y_{k,1} =Y_{2,1}^{k-1}$ for $1\le k\le p+1$,  ${\cal P}_1 Y_{2,1}=1$ 
and $Y_1 =pY_{2,1}$, so that eventually the $p$ previous rings $A_r$, $1\le r< p+1$, are all equal, setting $X:= \pi_1 /x$, 
to 
\begin{eqnarray}
\label{voiciA1}
A_1 =\Z_p^{\mathrm{ur}} [{\cal P}_1 ,Y_{2,1}] =  \Z_p^{\mathrm{ur}} [X, X^{-1} ]. 
\end{eqnarray}
When $r=p+1$, one has $Y_{k,p+1} =Y_{p,p+1}^{p+1-k}$ for $1\le k\le p+1$ and  ${\cal P}_{p+1} =Y_{p,p+1}^{p+1}$, so 
that $Y_{p,p+1}^{p+1} \cdot Y_{p+1} =p$. Also note that, with notation as in~(\ref{voiciA1}) above, 
$Y_{p,p+1} =Y_{2,1}^{-1} =X$; therefore, setting now $Y:=Y_{p+1}$:
$$
A_{p+1} \simeq \Z_p^{\mathrm{ur}} [X,Y]/(X^{p+1} \cdot Y -p ).
$$
As for $A_0$, finally, one has $Y_{1,0}^k =Y_{k,0}$ for all $1\le k\le p+1$, so that 
$$
A_0 \simeq \Z_p^{\mathrm{ur}} [Z],
$$
where, with notation as in~(\ref{voiciA1}), $Z=Y_{1,0} =Y_1 /p=Y_{2,1} = x/\pi_1 =1/X$.
   Our first blow-up of $A$ is therefore already regular and has the simple shape described in Figure~\ref{figureNS}.  This completes the proof of Theorem~\ref{thNS}.  $\Box$
\end{proof}

\subsection{Regular model for $\overline{{\cal M}} ({\cal P}, \Gamma_{\mathrm{ns}}^+ (p))$}

\begin{theo} 
\label{thNS+} 
Let $p>3$ be a prime, and let $[\Gamma_{\mathrm{ns}}^+ (p)]$ be the moduli problem over $\Z [1/p]$ 
associated with $\Gamma_{\mathrm{ns}}^+ (p)$.
Let ${\cal P}$ be a representable moduli problem, which is finite \'etale over $({\mathrm{Ell}})_{/\Z_p}$  as in 
Theorem~\ref{thNS}.  
Let $\overline{{\cal M}} ({\cal P}, \Gamma_{\mathrm{ns}}^+ (p)) =\overline{{\cal M}} ({\cal P}, \Gamma (p))/ 
\Gamma_{\mathrm{ns}}^+ (p)$ be the associated compactified fine moduli space. 
%

   Then $\overline{{\cal M}} ({\cal P}, \Gamma_{\mathrm{ns}}^+ (p))$ has a regular model over $\Z_p^{\mathrm{ur}}$ 
whose special fiber is made of one vertical Igusa part, from which grow horizontal chains of Drinfeld 
components above each supersingular point.


\medskip 
 
  That vertical part is a copy of $\overline{{\cal M}} ({\cal P} )_{\overline{\F}_p} $, with multiplicity $(p-1)/2$.  
%
%

\medskip

  If $s_{\cal P}$ is the number of supersingular points of $\overline{{\cal M}} ({\cal P})
(\overline{\F}_p)$, the $s_{\cal P}$ horizontal chains of (Drinfeld) components are all copies of branches of the 
following shape: a rational curve $\PPP^1$ with multiplicity $p+1$, on which, at two points, another $\PPP^1$ stands up, 
one with multiplicity $1$, and one with multiplicity $(p+1)/2$. All singularities in the special fiber which are 
intersection points between Igusa and Drinfeld parts have completed local rings with same completion as
$$
\Z_p^{\mathrm{ur}} [[X,Y]]/(X^{p+1} \cdot Y^{\frac{p-1}{2}} -p );
$$
as for those at the exceptional divisors on the Drinfeld parts, they are
$$
\Z_p^{\mathrm{ur}} [[X,Y]]/(X^{p+1} \cdot Y -p )
\hspace{0.5cm} and \hspace{0.5cm}
\Z_p^{\mathrm{ur}} [[X,Y]]/(X^{p+1} \cdot Y^{\frac{p+1}{2}} -p );
$$
see Figure~\ref{figureNS+} below. 
%

%
%
\end{theo}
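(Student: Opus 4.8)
The plan is to realise $\overline{\mathcal M}(\mathcal P,\Gamma_{\mathrm{ns}}^+(p))$ as the quotient $\overline{\mathcal M}(\mathcal P,\Gamma_{\mathrm{ns}}(p))/\langle w\rangle$ by the residual involution $w$ coming from the extra generator of the normalizer, and to feed the regular model of Theorem~\ref{thNS} through this further order-$2$ quotient, re-regularizing where necessary. The first task is to pin down $w$. Since $\Gamma_{\mathrm{ns}}^+(p)=\Gamma_{\mathrm{ns}}(p)\rtimes\langle\sigma\rangle$ with $\sigma$ the Frobenius conjugation ($\sigma\kappa\sigma^{-1}=\kappa^p$, $\sigma^2$ the homothety $-1$), Proposition~\ref{stabilisateur} forces us to lift $\sigma$ to a pair $(u,g_0)\in\F_{p^2}^*\times\GL_2(\F_p)$ with $g_0$ the anti-diagonal conjugation matrix ($\det g_0=-1$) and $u^{p+1}=\det g_0=-1$, so that $u\in\mu_{2(p+1)}\setminus\mu_{p+1}$. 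Reading off its action from Proposition~\ref{stabilisateur}, on the Drinfeld model~(\ref{DrinfeldXns}) one finds $w\colon U\mapsto u^{-(p+1)}U=-U$ and $V\mapsto u^{-2}V$ with $u^{-2}\in\mu_{p+1}$, hence on the regularized line~(\ref{modelecon}) the involution $U\mapsto-U$; in particular $w$ interchanges the two old branch points $(U,V)=(\pm A,0)$ and fixes only $U=0$ and $U=\infty$. On the Igusa side the same $(u,g_0)$, through the diamond description of Proposition~\ref{stabilisateur}, fixes the component and cuts its multiplicity from $p-1$ to $(p-1)/2$.

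With $w$ in hand I would form the quotient and locate where it fails to be regular. Away from the fixed locus the quotient is harmless: the two multiplicity-$1$ branches at $U=\pm A$ are merely swapped and descend to a single multiplicity-$1$ branch, with unchanged local ring $\Z_p^{\mathrm{ur}}[[X,Y]]/(X^{p+1}Y-p)$, while the multiplicity-$(p+1)$ main line maps isomorphically onto $\PPP^1_{U^2}$, again of multiplicity $p+1$. The only new phenomena sit over the two fixed points $U=0$ and $U=\infty$, where $w$ acts as a reflection and the quotient acquires a singularity that has to be blown up, exactly as in the last step of the proof of Theorem~\ref{thNS}.

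The heart of the matter is the local computation at $U=0$. Starting from the completed local ring $\Z_p^{\mathrm{ur}}[[U,y]]/(y^{p+1}-p)$ of the Theorem~\ref{thNS} model at that point (where $y$ is a fibral uniformizer, $y^{p+1}=p$), the crucial point is that a careful tracking of the Proposition~\ref{stabilisateur} action through the Galois descent shows that $w$ reflects not only the component direction $U$ but also the fibral parameter $y$, i.e.\ $(U,y)\mapsto(-U,-y)$. The ring of invariants is then generated by $a=U^2$, $b=Uy$, $c=y^2$ subject to $b^2=ac$, and since $p+1$ is even one has $c^{(p+1)/2}=y^{p+1}=p$; blowing up the resulting $A_1$-type point produces an exceptional $\PPP^1$ carrying the fibral relation $c^{(p+1)/2}=p$, that is, a new Drinfeld branch of multiplicity $(p+1)/2$, with crossing $\Z_p^{\mathrm{ur}}[[X,Y]]/(X^{p+1}Y^{(p+1)/2}-p)$. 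The identical analysis at $U=\infty$, where the multiplicity-$(p+1)$ line meets the Igusa part, turns the former crossing $X^{p+1}Y^{p-1}-p$ into $X^{p+1}Y^{(p-1)/2}-p$, consistently with the halving of the Igusa multiplicity. I expect the main obstacle to be precisely this bookkeeping of the $w$-action on the fibral parameter: establishing that $w$ reflects $y$ (and not merely $U$) is what distinguishes the fixed point $U=0$ from the swapped pair $U=\pm A$ and is responsible for the asymmetric branch of multiplicity $(p+1)/2$ in Figure~\ref{figureNS+}. Once this is secured, the blow-up and the verification of regularity (via Serre's pseudo-reflection criterion, cf.~\cite{CEdSt03}, Theorem~2.3.9, and a straightforward adaptation of \cite{CEdSt03}, Lemma~2.3.4) run parallel to Theorem~\ref{thNS}.
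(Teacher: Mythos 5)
Your proposal is sound, and it takes a genuinely different route from the paper's. The paper does not pass through Theorem~\ref{thNS} at all: it restarts from the semistable model of the normalizer curve itself (Theorem~3.3 of \cite{EdixP20}) over $\Z_p^{\mathrm{ur}}[p^{2/(p^2-1)}]$ and quotients by the full Galois group $\F_{p^2}^*/\{\pm 1\}$; this forces a case distinction $p\equiv \pm 1 \bmod 4$ for the Igusa parts, and the new branches arise from the order-$2$ inertia of Galois at the points $Y=0$ of the Drinfeld model~(\ref{DrinfeldNS+1}). You instead quotient the finished regular model of Theorem~\ref{thNS} by the single involution $w$. The decisive local algebra is literally the same in both treatments: your invariant ring $\Z_p^{\mathrm{ur}}[a,b,c]/(b^2-ac,\ c^{(p+1)/2}-p)$ at $U=0$ is the paper's $\Z_p^{\mathrm{ur}}[t,X,Y]/(t^{(p+1)/2}-p,\ X^2-tY)$, with the same blow-up producing the $\PPP^1$ of multiplicity $(p+1)/2$ and the crossing $X^{p+1}Y^{(p+1)/2}-p$. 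What your route buys: no congruence case distinction (the Theorem~\ref{thNS} model is uniform in $p \bmod 4$), reuse of Theorem~\ref{thNS} as a black box, and a transparent explanation of the asymmetric branch pattern (the multiplicity-$1$ branch is the folding of the two old ones, the multiplicity-$(p+1)/2$ branch resolves the isolated fixed point). What the paper's route buys: uniformity of method with all the other theorems, and no need to justify that $w$ extends to the regular model nor to do the descent bookkeeping your argument hinges on. Your key claim, that the descended action at $U=0$ is $(U,y)\mapsto(-U,-y)$, is correct and is proved exactly as you sketch: your lift $(u,g_0)$ sends the chosen point $(U,V)=(0,V_0)$ to $(0,u^{-2}V_0)$, so the descended involution is that lift composed with the Galois element $v$ satisfying $v^{p-1}=u^{2}$; since by~(\ref{actionDeGalois}) and~(\ref{laclassiquederegularisation}) any Galois element multiplies $V$ and the fibral uniformizer by the \emph{same} scalar $v^{p-1}$, the composite fixes $V$, negates $U$, and multiplies the fibral uniformizer by $v^{p-1}u^{p-1}=u^{p+1}=-1$.

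One caution: the phrase ``the identical analysis at $U=\infty$'' is misleading and, taken literally, would break the proof. At $U=\infty$ the involution acts \emph{trivially} on the Igusa component (its action there is through diamond operators, which die in $\overline{{\cal M}}({\cal P})_{\overline{\F}_p}$ --- this is also exactly why the Igusa multiplicity halves), so on the completed local ring $\Z_p^{\mathrm{ur}}[[X,Y]]/(X^{p+1}Y^{p-1}-p)$ it is a pseudo-reflection $(X,Y)\mapsto(X,-Y)$: the invariant ring $\Z_p^{\mathrm{ur}}[[X,Y^2]]/\bigl(X^{p+1}(Y^{2})^{(p-1)/2}-p\bigr)$ is already regular, and no blow-up or exceptional component appears there. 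If the action reflected both coordinates at $U=\infty$ as it does at $U=0$, every Igusa--Drinfeld crossing would sprout a spurious extra component, contradicting the statement. So the two eigenvalue patterns, $(-1,-1)$ at $U=0$ versus $(1,-1)$ at $U=\infty$ and along the Igusa part, must be kept distinct. Finally, you should record that ${\cal P}$ being representable makes $w$ free on the generic fiber, so the fixed locus is purely vertical and your list of fixed points (the Igusa part plus the two points per Drinfeld chain) is exhaustive; with these two points made explicit, your argument is complete and agrees with the paper's conclusions.
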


\begin{proof}

Start from the semistable model $\overline{{\cal M}} ({\cal P}, \Gamma_{\mathrm{ns}}^+ (p))^{\mathrm{st}}$ 
of Theorem~3.3 in~\cite{EdixP20}, over $\Z_p^{\mathrm{ur}} [p^{2/(p^2 -1)}]$. 
As in the proof of Theorem~\ref{thNS}, one sees that the graph of the special fiber of the quotient scheme by 
Galois remains unchanged over $\Z_p^{\mathrm{ur}} [\sqrt{p}]$. Then over $\Z_p^{\mathrm{ur}}$, there is only 
one ``vertical'', Igusa part, crossed at supersingular points by Drinfeld components (whether $p\equiv 1$ or $-1$
mod $4$). 

\medskip

   If $p\equiv -1$ mod $4$, the only Igusa part of $\overline{{\cal M}} ({\cal P}, \Gamma_{\mathrm{ns}}^+ (p))^{\mathrm{st}}$ 
over $\Z_p^{\mathrm{ur}} [p^{2/(p^2 -1)}]$ is $\overline{\cal M} 
({\cal P}, {\mathrm{Ig}}  (p)/\{\pm 1\})_{\overline{\F}_p }$. With notation as in~\cite{EdixP20}, Theorem~3.3,
one can assume that the action on $\F_p^2$ of our non-split Cartan subgroup has a basis of $\F_{p^2} /
\F_p$-conjugate eigenvectors $P_1$, $P_2$, with respect to which the normalizer of Cartan can be written as
$$
\left\{ \left( {r\atop 0}\ {0\atop r^p }\right) , w_r :=\left( {0\atop r^p}\ {r \atop 0 }\right) , r\in \F_{p^2}^*
\right\} .
$$
So writing $P$ for the $\F_p$-line spanned in $\F_p^2$ by $P_1 +P_2$, one might consider a representative 
${\mathrm{Ig}}_{i,P}$ in $\overline{{\cal M}} ({\cal P}, \Gamma (p))_{\overline{\F}_p}$ above $\overline{\cal M} ({\cal P}, {\mathrm{Ig}}  
(p)/\{\pm 1\})_{\overline{\F}_p }$, which shows that in our normalizer-of-non-split-Cartan curve, the Igusa part
is acted on trivially exactly by the Galois subgroup $\mu_{2(p+1)} (\F_{p^2})/\{ \pm 1\}$ of $\F_{p^2}^* /\{ \pm 
1\}$. (Indeed, just take the quotient by the subgroup $\langle (u^{(p-1)/2} , \left( {0\atop 1 }\ {1 \atop 0 }\
\right) )\rangle$ (for $u$ a generator of $\F_{p^2}^*$) of $\F_{p^2}^* \times \Gamma_{\mathrm{ns}}^+ (p)$, and
apply Proposition~\ref{stabilisateur}.) Taking the quotient by this Galois subgroup shows that over $\Z_p^{\mathrm{ur}} 
[p^{2/(p -1)}]$, $\overline{{\cal M}} ({\cal P}, \Gamma_{\mathrm{ns}}^+ (p))$ has a
special fiber with two Igusa parts which are still isomorphic to $\overline{\cal M} ({\cal P}, {\mathrm{Ig}}  
(p)/\{\pm 1\})_{\overline{\F}_p }$, with multiplicity $1$. Now the quotient Galois group $\F_{p^2}^* /\mu_{2(p+1)} 
(\F_{p^2})$ has {\it odd} order $(p-1)/2$; therefore, any of its elements can be given a representative $v^2$ in 
$\F_{p^2}^{*,2}$, so that it acts as $(v^2 , H_{v^{p+1}} )$, which is the diamond operator $\langle v^{p+1} \rangle_p$, on 
our ${\mathrm{Ig}}_{i,P}$ (notation as for~(\ref{diamond}), proof of Theorem~\ref{thNS}). This eventually shows that, over 
$\Z_p^{\mathrm{ur}}$, $\overline{{\cal M}} ({\cal P}, \Gamma_{\mathrm{ns}}^+ (p))_{\overline{\F}_p}$ has an Igusa 
part which is isomorphic to $\overline{\cal M} ({\cal P} )_{\overline{\F}_p }$, with multiplicity $(p-1)/2$.

\medskip

  If $p\equiv 1$ mod $4$, each of the two Igusa parts $\overline{\cal M} ({\cal P}, {\mathrm{Ig}}  (p)/C_4 
)_{\overline{\F}_p }$ of the special fiber of $\overline{{\cal M}} ({\cal P}, \Gamma_{\mathrm{ns}}^+ (p))^{\mathrm{st}}$ 
over $\Z_p^{\mathrm{ur}} [p^{2/(p^2 -1)}]$, has Galois stabilizer 
the index $2$ subgroup $\F_{p^2}^{*,2} /\{ \pm 1\}$ of squares in $\F_{p^2}^* /\{ \pm 1\}$. Inside of that,
the subgroup with trivial action can be seen to be $\mu_{2(p+1)} (\F_{p^2})/\{ \pm 1\}$, in exactly the same fashion
as for the $p\equiv -1$ mod $4$ case. Therefore, taking the quotient by this Galois subgroup shows that
over $\Z_p^{\mathrm{ur}} [p^{2/(p -1)}]$, $\overline{{\cal M}} ({\cal P}, \Gamma_{\mathrm{ns}}^+ (p))$ has a
special fiber with two Igusa parts which are still isomorphic to $\overline{\cal M} ({\cal P}, {\mathrm{Ig}}  
(p)/C_4 )_{\overline{\F}_p }$, with multiplicity $1$. The quotient Galois group $\F_{p^2}^{*,2} /\mu_{2(p+1)} 
(\F_{p^2})$, with order $(p-1)/4$, again acts as $\{ (v^2 , H_{v^{p+1}} ), v\in \F_{p^2}^* \}$, hence generically
faithfully on the Igusa part. Thus over $\Z_p^{\mathrm{ur}} [\sqrt{p}]$ we obtain a special fiber whose Igusa parts are 
copies of $\overline{\cal M} ({\cal P} )_{\overline{\F}_p }$, with multiplicity 
$(p-1)/4$. As before, those copies are switched when landing to $\Z_p^{\mathrm{ur}}$, whence the final multiplicity
$(p-1)/2$, as in the $p\equiv 1$ mod $4$ case. 

\begin{figure}
\begin{center}
\begin{picture}(320,200)(-30,-80)


 
\qbezier(-50,60)(-35,62)(-22,65)
\qbezier(-50,62)(-35,66)(-22,65)

\qbezier(-22,65)(-2,69)(22,64)
\qbezier(-22,65)(-2,64)(22,64)

\qbezier(22,64)(33,64)(45,60)
\qbezier(22,64)(33,61)(45,58)

\qbezier(-50,61)(5,71)(45,59)

\put(-73,55){$D_s^+$}
\put(62,58){$p+1$}

\qbezier(-49,38)(-35,43)(-19,43)
\qbezier(-49,36)(-35,39)(-19,43)

\qbezier(-19,43)(0,49)(23,45)
\qbezier(-19,43)(0,44)(23,45)

\qbezier(23,45)(39,45)(46,41)
\qbezier(23,45)(32,42)(46,39)

\qbezier(-49,37)(5,52)(46,40)
\put(-73,35){$D_s^+$}
\put(62,35){$p+1$}

\qbezier(-48,-45)(-30,-41)(-18,-42)
\qbezier(-48,-47)(-28,-44)(-18,-42)

\qbezier(-18,-42)(9,-38)(35,-46)
\qbezier(-18,-42)(-0,-43)(35,-46)

\qbezier(35,-46)(45,-44)(59,-50)
\qbezier(35,-46)(45,-49)(59,-52)

\qbezier(-48,-46)(5,-36)(59,-51)
\put(-74,-45){$D_s^+$}
\put(67,-53){$p+1$}


\qbezier(0,105)(-5,0)(15,-70) 
\qbezier(1,105)(-4,0)(16,-70)
\qbezier(-1,105)(-6,0)(14,-70)

\put(-15,112){$(p-1)/2$}


\put(-20,-95){Galois quotient}


\put(-50,27){$\cdot$}
\put(-50,5){$\cdot$}
\put(-50,-17){$\cdot$}
\put(-50,-36){$\cdot$}

\put(50,27){$\cdot$}
\put(55,5){$\cdot$}
\put(57,-17){$\cdot$}
\put(60,-36){$\cdot$}


\put(150,6){\vector(-1,0){56}}

\put(107,-20){blow-up}


\qbezier(180,69)(245,76)(296,60)
\qbezier(180,68)(245,75)(295,59)
\qbezier(180,67)(245,74)(295,58)
\put(305,58){$D_s^+$}
\put(150,64){$p+1$}

\qbezier(182,36)(245,46)(295,31)
\qbezier(182,35)(245,45)(295,30)
\qbezier(182,34)(245,44)(295,29)
\put(305,27){$D_s^+$}
\put(150,32){$p+1$}

\qbezier(188,-44)(245,-34)(295,-44)
\qbezier(188,-45)(245,-35)(295,-45)
\qbezier(188,-46)(245,-36)(295,-46)
\put(305,-50){$D_s^+$}
\put(153,-49){$p+1$}

\put(222,27){$\cdot$}
\put(221,5){$\cdot$}
\put(222,-17){$\cdot$}
\put(224,-36){$\cdot$}


\qbezier(205,53)(205,65)(201,82)
\put(196,86){$1$}

\qbezier(202,23)(203,35)(198,52)
\put(192,54){$1$}

\qbezier(208,-63)(211,-45)(208,-22)
\put(202,-17){$1$}


\qbezier(276,53)(271,65)(272,82)
\qbezier(275,53)(270,65)(271,82)
\put(267,86){$\frac{p+1}{2}$}

\qbezier(273,23)(267,35)(269,52)
\qbezier(272,23)(266,35)(268,52)
\put(252,53){$\frac{p+1}{2}$}

\qbezier(276,-63)(264,-45)(269,-22)
\qbezier(275,-63)(263,-45)(268,-22)
\put(270,-17){$\frac{p+1}{2}$}


\put(226,112){$(p-1)/2$}

\qbezier(241,105)(236,0)(246,-70)
\qbezier(240,105)(235,0)(245,-70)
\qbezier(239,105)(234,0)(244,-70)

%


\put(215,-95){Regular model}


\end{picture}
\end{center}
\caption{Special fiber at $\overline{\F}_p$ of the regular model $\overline{{\cal M}} ({\cal P}, 
\Gamma_{\mathrm{ns}}^+ (p))$}  
\label{figureNS+}
\end{figure}  
  
\bigskip

    Now for the Drinfeld part. With notation as in proof of Theorem~\ref{thNS} above, we know from Theorem~3.3 
of~\cite{EdixP20} that the Drinfeld components of $\overline{{\cal M}} ({\cal P},\Gamma_{\mathrm{ns}}^+ (p))^{\mathrm{st}}$ 
over $\Z_p^{\mathrm{ur}} [p^{2/(p^2 -1)}]$ have equation
\begin{eqnarray}
\label{DrinfeldNS+1}
Y^2 =X( X^{\frac{p+1}{2}} +\left( \frac{aN}{2}\right)^2 ) 
\end{eqnarray}
for $X=V^2 =(\tilde{\alpha} \tilde{\beta})^2$ and $Y=U\times V =\tilde{\alpha} \tilde{\beta}(\tilde{\alpha}^{p+1} 
-aN/2 )$, with Galois action $u \colon 
X\mapsto u^{2(p-1)} X , Y\mapsto u^{p-1} Y$ if $u\in \F_{p^2}^* /\{\pm 1\}$ (see~(\ref{actionDeGalois})). The 
Drinfeld parts therefore descend to the same model~(\ref{DrinfeldNS+1}), with multiplicity $1$, over $\Z_p^{\mathrm{ur}} 
[p^{1/(p+1)}]$. Then the Galois group acts generically faithfully on closed points in the special fiber,
so that the quotient scheme has special fiber some projective line with multiplicity $p+1$. Indeed parameters for the 
quotient scheme in the special fiber can be chosen as $R:= X^{\frac{p+1}{2}}$ and $S:=Y^2 /X$ (whence a trivial model 
$S=R+(\frac{aN}{2})^2$). The only closed points on the model~(\ref{DrinfeldNS+1}) on which the 
Galois action has inertia are either those at infinity (which correspond to intersection point(s) with Igusa 
part(s)) or points for which  $Y=0$, that is,
$$
(X,Y) =(0,0) \hspace{1cm} {\mathrm{or}}  \hspace{1cm} (X,Y)=( \left( \frac{p+1}{2}\right)^{\mathrm{nd}} 
{\mathrm{roots\ of\ }} -\left( \frac{aN}{2}\right)^2 ,0) ,
$$
which give, on the model $S=R+\left( \frac{aN}{2}\right)^2$ of the Drinfeld components downstairs (over $\Z_p^{\mathrm{ur}}$),
$$
(R,S)=(0, (\frac{aN}{2})^2 )     \hspace{1cm} {\mathrm{or}}  \hspace{1cm} (R,S)=(-(\frac{aN}{2})^2 ,0).
$$
At that set of points, Serre's pseudo-reflection criterion (Theorem~2.3.9 of~\cite{CEdSt03}) implies the quotient 
is not regular. 

\medskip
 
   (Here we may remark that if $p\equiv 1$ mod $4$, then (\ref{DrinfeldNS+1}) defines a hyperelliptic equation of 
type ``$Y^2 =X^{2g+2} +\cdots $'', which has two points at infinity: they correspond exactly to the intersection with the 
two Igusa parts on $\overline{{\cal M}} ({\cal P},\Gamma_{\mathrm{ns}}^+ (p))^{\mathrm{st}}$ over $\Z_p^{\mathrm{ur}} 
[p^{2/(p^2 -1)}]$. Whereas if $p\equiv -1$ mod $4$, then (\ref{DrinfeldNS+1}) defines an equation of type ``$Y^2 =X^{2g+1} +
\cdots $'', which has one points at infinity, and indeed in 
that case there is only one Igusa part intersecting Drinfeld. That numerology shows up again, in a slightly more 
subtle guise, in the proof of the split case 
(Theorem~\ref{thS+}) below). 

\medskip

     Let us complete the study of the above singular points (at finite distance). At the first one, $(X,Y) =(0,0)$, 
the completed local ring can be expressed as series in $Y$ and $p^{1/(p+1)}$, and the same computation 
as in the proof of Theorem~\ref{thNS} above (see (\ref{laclassiquederegularisation}) and what follows) 
shows that blowing up makes a $\PPP^1_{\overline{\F}_p}$ with multiplicity $1$ pop up. That is enough to regularize the 
situation there.  

    On the set of $(p+1)/2$ points of shape $(X,Y)=((\frac{p+1}{2})^{\mathrm{nd}} {\mathrm{\ roots\ of\ }} -
(\frac{aN}{2})^2 ,0)$, Galois acts transitively. Set $\pi_1 :=p^{1/(p+1)}$, and descend  the 
model~(\ref{DrinfeldNS+1}) to $\Z [\pi_1 ]$. Then the quotient Galois group $\F_{p^2}^* /(\F_{p^2}^* )^{p+1} 
\simeq \mu_{p+1} (\F_{p^2})$, which acts generically freely on the Drinfeld component, acts with inertia its 
order $2$ subgroup at each such point $((\frac{p+1}{2})^{\mathrm{nd}} {\mathrm{\ roots\ of\ }} 
-(\frac{aN}{2})^2 ,0)$. If $\varepsilon$ is a generator of that order $2$ subgroup and $x$ is a good lift of 
parameter of the ring mod $\pi_1$ (in the sense of Lemma~2.3.4 of~\cite{CEdSt03}, 
cf.~(\ref{laclassiquederegularisation}) above), one has 
$$
\varepsilon \colon \pi_1 \mapsto -\pi_1 , x\mapsto -x,
$$ 
and the ring of Galois invariants of the completed local ring is therefore the same as that of
$$
A =\bigoplus_{k\ge 0} \Z_p^{\mathrm{ur}} [\pi_1^2 ] x^{2k}     \bigoplus_{k\ge 0} \Z_p^{\mathrm{ur}} [\pi_1^2 ] 
\pi_1 x^{2k+1}  \simeq \Z_p^{\mathrm{ur}} [t,X,Y] /( t^{\frac{p+1}{2}} -p , X^2 -tY). 
$$ 
Once more, in a similar fashion as in the proof of Theorem~\ref{thNS} ((\ref{laclassiquederegularisation}) and around), one 
sees that $A$ is singular, but blowing up once at ${\cal M}=(t,X,Y)$ crafts a scheme
which is regular, whose special fiber is the union of  the special fiber of the Drinfeld part (projective 
line with multiplicity $p+1$) with another $\PPP^1_{\overline{\F}_p}$ having multiplicity $(p+1)/2$, which intersects 
it. The intersection point has ring of coordinates
$$
\Z_p^{\mathrm{ur}} [a,b]/(a^{p+1} b^{\frac{p+1}{2}} -p). \hspace{2cm} 
$$ 
As for the singular points at the intersection of Drinfeld and Igusa parts, note that if $p\equiv 1$ mod $4$, they are 
{\em{not}} regular in the semistable model over $\Z_p^{\mathrm{ur}} [p^{2/(p^2 -1)}]$ of Theorem~3.3 in~\cite{EdixP20}. 
However, their image in the Galois quotient is regular over $\Z_p^{\mathrm{ur}}$ (or even over the quadratic subextension of  
$\Z_p^{\mathrm{ur}} [p^{2/(p^2 -1)}]$). $\Box$   
%
 
\end{proof}

\subsection{The coarse case: $X_{\mathrm{ns}} (p)$}

\begin{theo} 
\label{thNSgrossier} 
For $p\geq 13$ a prime, let $X_{\mathrm{ns}} (p)$ be the coarse modular curve over $\Q$ associated with 
$\Gamma_{\mathrm{ns}} (p)$. 
%
%

   The {\em minimal regular model with normal crossings} of the curve $X_{\mathrm{ns}} (p)$ over $\Z_p^{\mathrm{ur}}$ has a 
special fiber which is made of one vertical Igusa component, which intersects horizontal chains of Drinfeld components 
above each supersingular point.



\medskip 
 
  The vertical part is a (smooth) projective line ($j$-line), with multiplicity $p-1$.  
%

\medskip

  At each supersingular point of that Igusa component, 
unless the corresponding $j$-invariant has extra geometric automorphisms (that 
is, $j\equiv 0$ or $1728 \mod p$), the crossing Drinfeld component is a copy of  a rational curve $\PPP^1$ with 
multiplicity $p+1$, on which,
at two points, another $\PPP^1$ springs up with multiplicity $1$. All intersection points in the special fiber are regular 
normal crossings; more precisely, the completed local rings at the former intersection points (between Igusa and those 
Drinfeld components) are   
$$
\Z_p^{\mathrm{ur}} [[X,Y]]/(X^{p+1} \cdot Y^{p-1} -p ),
$$
and those at the latter two intersection points (purely on each Drinfeld part) are
$$
\Z_p^{\mathrm{ur}} [[X,Y]]/(X^{p+1} \cdot Y -p ).
$$
Moreover, the following hold:
\begin{itemize}
\item {\bf If $p\equiv 1$ mod $12$}, the vertical Igusa component intersects transversally an extra component at 
each of the two (ordinary) $j$-invariants $1728$ and 
$0$, which are a projective line with multiplicity $(p-1)/2$ (ring at the intersection point: $\Z_p^{\mathrm{ur}} 
[[a,b]]/
(a^{p-1} b^{\frac{p-1}{2}} -p)$) and a projective line with 
multiplicty $(p-1)/3$ (ring at the intersection point: $\Z_p^{\mathrm{ur}} [[a,b]]/(a^{p-1} b^{\frac{p-1}{3}} -p)$), 
respectively. 
\item  {\bf If $p\equiv 11$ mod $12$}, the Drinfeld components above the supersingular $j\equiv 1728$ and $j\equiv 0 
\mod p$ are both projective lines, with 
multiplicity $(p+1)/2$ and $(p+1)/3$, respectively, both endowed with two extra components which are copies of $\PPP^1$ with 
multiplicity $1$
(intersection local rings: $\Z_p^{\mathrm{ur}} [[a,b]]/(a^{\frac{p+1}{2}} b -p)$ and $\Z_p^{\mathrm{ur}} [[a,b]]/
(a^{\frac{p+1}{3}} b-p)$ respectively). The rings of the intersection between the Igusa component and the special Drinfeld 
components are
$$
\Z_p^{\mathrm{ur}} [[a,b]]/(a^{p-1} b^{\frac{p+1}{2}} -p)
$$ 
and
$$
\Z_p^{\mathrm{ur}} [[a,b]]/(a^{p-1} b^{\frac{p+1}{3}} -p),
$$ 
respectively.
\item  {\bf If $p\equiv 5$ mod $12$}, we have the relevant mix between the above two situations: $j\equiv 1728$ is 
ordinary ${\mathrm{mod}}\ p$, so see the case $p\equiv 1\mod 12$, whereas $j\equiv 0$ is supersingular ${\mathrm{mod}}\ p$, 
so see the case $p\equiv 11\mod 12$.
\item  {\bf If $p\equiv 7$ mod $12$}, the situation is mutatis mutandis the same as before: this time $j\equiv 1728$ is supersingular ${\mathrm{mod}}\ p$, and $j\equiv 0$ is ordinary ${\mathrm{mod}}\ p$.
\end{itemize}
%
%

\medskip
  
  See Figure~\ref{figureNSgrossier} below. 
\end{theo}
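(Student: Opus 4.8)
The plan is to realize $X_{\mathrm{ns}}(p)$ as the quotient of the fine model $\overline{\cal M}({\cal P},\Gamma_{\mathrm{ns}}(p))$ of Theorem~\ref{thNS} by the finite group $\mathrm{Aut}({\cal P})$ acting on ${\cal P}$-structures, and to regularize that quotient. Over the ordinary locus away from $j\equiv 0,1728$ and the cusp, the underlying elliptic curves carry only the automorphism $\pm1$, already absorbed into the Cartan structure and into the Galois descent of Theorem~\ref{thNS}; there the forgetful map is generically \'etale, so the Igusa part descends to the smooth $j$-line with unchanged multiplicity $p-1$, and each Drinfeld chain over a supersingular $j\neq 0,1728$ is carried isomorphically onto its image. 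This yields at once the generic shape of the special fiber and the local rings $\Z_p^{\mathrm{ur}}[[X,Y]]/(X^{p+1}Y^{p-1}-p)$ and $\Z_p^{\mathrm{ur}}[[X,Y]]/(X^{p+1}Y-p)$, inherited verbatim from Theorem~\ref{thNS}. The whole problem thus localizes at the points lying over $j\equiv 0$ and $j\equiv 1728$.

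At those points the elliptic curve acquires extra automorphisms, with group $\mu_4$ (so $C_2=\mu_4/\{\pm1\}$) at $j=1728$ and $\mu_6$ (so $C_3=\mu_6/\{\pm1\}$) at $j=0$; write $n\in\{2,3\}$ for the order of the extra part. Everything hinges on whether the relevant $j$ is ordinary or supersingular mod $p$, which by the classical criteria is governed by $p\bmod 4$ and $p\bmod 3$: $j=1728$ is supersingular iff $p\equiv 3\pmod 4$, and $j=0$ is supersingular iff $p\equiv 2\pmod 3$. Recombining these two dichotomies produces exactly the four classes $p\equiv 1,5,7,11\pmod{12}$ of the statement, placing each of $j=0,1728$ in the ordinary or the supersingular column as listed. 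The arithmetic is self-consistent: $n\mid(p-1)$ precisely in the ordinary case and $n\mid(p+1)$ precisely in the supersingular case, which is what renders the multiplicities below integral.

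In the supersingular regime the extra part acts on the whole Drinfeld component. By Proposition~\ref{stabilisateur} the generator of $\mathrm{Aut}(E_s)=\mu_{2n}$ multiplies the formal parameter $Z$, hence $\alpha$ and $\beta$, by a primitive $2n$-th root of unity, so it fixes $U$ and scales $V$ by the primitive $n$-th root $\zeta_n$; combined with the diagonal Galois action $\sigma\colon(\pi_1,V)\mapsto(\zeta_{p+1}\pi_1,\zeta_{p+1}V)$ of Theorem~\ref{thNS}, and using $n\mid(p+1)$, one finds that the generic stabilizer of the Drinfeld component is cyclic of order $n$, acting on the transverse uniformizer $\pi_1$ by $\zeta_n$ while fixing the component coordinate. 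This is a pseudo-reflection, so the quotient stays regular along the component but turns $\pi_1^{p+1}=p$ into $(\pi_1^{n})^{(p+1)/n}=p$: the Drinfeld line now carries multiplicity $(p+1)/n$. At the two fixed points $V=0,\infty$ the action is diagonal, and a single blow-up --- the toric computation of Theorem~\ref{thNS} with $(p+1)/n$ in place of $p+1$ --- produces the two multiplicity-$1$ tails, with rings $\Z_p^{\mathrm{ur}}[[a,b]]/(a^{(p+1)/n}b-p)$ and Igusa--Drinfeld ring $\Z_p^{\mathrm{ur}}[[a,b]]/(a^{p-1}b^{(p+1)/n}-p)$. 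In the ordinary regime the point instead lies on the Igusa $j$-line, where $C_n$ fixes it and acts diagonally on the two local coordinates (one along the line, one transverse and carrying the relation attached to multiplicity $p-1$). This is a genuine $\tfrac1n(1,1)$ cyclic quotient singularity rather than a reflection; one blow-up resolves it and sprouts a transverse $\PPP^1$ of multiplicity $(p-1)/n$, with intersection ring $\Z_p^{\mathrm{ur}}[[a,b]]/(a^{p-1}b^{(p-1)/n}-p)$. Assembling the two special points in their respective regimes over the four congruence classes yields the four bullet points.

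The main obstacle is the bookkeeping of the $C_n$-action in local coordinates, and in particular the clean separation of its two guises: a pseudo-reflection on the transverse uniformizer in the supersingular case (merely rescaling a multiplicity) versus a diagonal cyclic quotient singularity in the ordinary case (creating a new component). Pinning this down requires transporting the modular meaning of the extra automorphism --- an automorphism of $E$ or $E_s$ acting on $Z$, hence on $x,y$, hence on the Drinfeld or Igusa coordinates and on the uniformizer --- through the successive Galois descents of Theorem~\ref{thNS}, and verifying that, modulo the Galois action already spent there, it contributes exactly a primitive $n$-th root of unity in the relevant direction. Once that weight is fixed the resolutions are the routine blow-ups performed above, and regularity of the listed rings is a final application of Serre's pseudo-reflection criterion (cf.~\cite{CEdSt03}, Theorem~2.3.9).
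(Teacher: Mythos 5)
Your proposal is correct in substance --- every local conclusion you state agrees with Theorem~\ref{thNSgrossier} --- but it takes the two quotients in the opposite order from the paper, so it is a genuinely different organization of the same computation. The paper starts from the semistable model of the \emph{coarse} curve $X_{\mathrm{ns}}(p)$ over $\Z_p^{\mathrm{ur}}[p^{2/(p^2-1)}]$ (Theorem~3.4 of~\cite{EdixP20}), in which the quotient by the extra automorphisms has already been performed --- in particular the exceptional Drinfeld equations such as $U^2=W^{(p+1)/2}+A_{\mathrm{ns}}$ come for free --- and the only remaining work is Galois descent plus local resolution. You instead start from the regular model of the \emph{fine} curve of Theorem~\ref{thNS}, where the Galois descent has already been spent, and quotient by $\mathrm{Aut}({\cal P})$ (effectively $\mathrm{Aut}({\cal P})/\{\pm1\}$, as $-1$ acts trivially); this cleanly localizes all new work at the points above $j\equiv 0,\,1728$. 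The price is that to compute the stabilizer weights you must climb back to the ramified base and recombine the automorphism with the Galois action --- your anti-diagonal ``generic stabilizer'' in the supersingular case, your composite $\frac{1}{n}(1,1)$ element in the ordinary case --- at which point you are performing exactly the computation the paper performs (your transverse pseudo-reflection giving multiplicity $(p+1)/n$ is the paper's descent of the exceptional Drinfeld component; your ordinary-case action is the paper's~(\ref{lactionomega3})). What your route buys is conceptual transparency and re-use of the toric blow-up of Theorem~\ref{thNS} as a black box; what the paper's route buys is never having to analyse the automorphism quotient at all.

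Two soft spots should be repaired. First, the one assertion you do not prove is precisely where the content sits: that in the ordinary case the composite stabilizer acts by the \emph{scalar} $\frac{1}{n}(1,1)$, with equal weights on the transverse uniformizer and on the deformation parameter. This is not automatic: in the parallel situation of Theorem~\ref{thNS+grossier} the corresponding action~(\ref{lactionzeta3}) is \emph{not} scalar, and the resolution there produces a chain of two components rather than a single one; so the equality of the two weights is case-specific and must genuinely be checked. It does hold here, for the reason visible in the paper's proof: the compensating Galois element acts on $\pi_2=p^{1/(p-1)}$ by the \emph{square} of its diamond value $\bar\eta$, while the automorphism acts on the deformation parameter with weight two, i.e.\ again by $\bar\eta^{2}$ (cf.\ (7) of~\cite{EdixP20} and Lemma~2.3.4 of~\cite{CEdSt03}). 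Second, the cusps: in your route $\mathrm{Aut}({\cal P})$ has non-trivial stabilizers at the cusps of $\overline{\cal M}({\cal P})$ (the forgetful map to the $j$-line is ramified there, with index the level of ${\cal P}$), unlike the paper's Galois group, which permutes the cusps of the Igusa curve freely; you must check that these stabilizers act as pseudo-reflections, trivially on the transverse direction, so that the quotient stays regular with multiplicity $p-1$ along the Igusa line. Finally, a small imprecision: ``the two fixed points $V=0,\infty$'' --- the locus $V=0$ consists of two points, and it is these that produce the two multiplicity-one tails, while $V=\infty$ is the crossing with the Igusa component.
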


\begin{rema}
\label{petitsminimauxNsplit}
{\rm{Theorem~\ref{thNSgrossier} actually gives regular models for all $p\geq 5$, but we have excluded primes $p\leq 11$ 
from its statement because of the non-minimality of our regular model with normal crossings in those cases. However, 
$X_{\mathrm{ns}} (p)$ is just a projective line if $p\leq 5$ (including $p=2, 3$), and for $p=7$ and $p=11$, the minimal 
regular model with normal crossings is obtained by contracting the central Igusa component (with multiplicity $(p-1)$).}} 
\end{rema}

\begin{proof}
As before we start from the semistable model $X_{\mathrm{ns}} (p)^{\mathrm{st}}$ of the coarse curve over 
$\Z_p^{\mathrm{ur}} [p^{2/(p^2 -1)}]$ found in Theorem~3.4 of~\cite{EdixP20}
and quotient out by the Galois group $G ={\mathrm{Gal}} (\Q_p^{\mathrm{ur}} (p^{2/(p^2 -1)})/\Q_p^{\mathrm{ur}})
\simeq \F_{p^2}^* /\{ \pm 1\}$. 
The only new feature with respect to the above Theorem~\ref{thNS} 
appears at points with extra automorphisms, that is, corresponding to isomorphism classes of elliptic curves $E$ 
with $j$-invariant $1728$ (where 
${\mathrm{Aut}}_{{\overline{\F}}_p} (E) \simeq \Z /4\Z$) or $j$-invariant $0$ (where ${\mathrm{Aut}}_{{\overline{\F}}_p} (E) 
\simeq \Z /6\Z$). In each situation, one needs to distinguish between the cases where $E$ is ordinary or supersingular. 

\medskip

    So start with the case $j= 1728$ and assume it is ordinary at $p$ ($\equiv 1$ mod $4$). Elliptic curves 
with $j\equiv 1728$ define $(p-1)/4$ points on each of the two Igusa vertical components $\overline{M} ({\mathrm{Ig}} 
(p)/\{\pm 1\} )_{\overline{\F}_p} $ in the special fiber of the semistable $X_{\mathrm{ns}} (p)^{\mathrm{st}}$. First taking 
the Galois quotient under $\mu_{p+1} (\F_{p^2} )/\{ \pm 1\}$ gives a model whose Igusa components remain the same over 
the base $\Z_p^{\mathrm{ur}} [\pi_2 := p^{1/(p -1)}]$.
The local rings at those special points with $j=1728$ have the same completion (at the closed point $(x,\pi_2 )$) as
\begin{eqnarray}
\label{voiciPlain}
A :=\Z_p^{\mathrm{ur}} [\pi_2 ] [x] .
\end{eqnarray}
Then the Galois subquotient $G_1 :={\mathrm{Gal}} (\Q_p^{\mathrm{ur}} (p^{1/(p-1)} )/\Q_p^{\mathrm{ur}} 
(\sqrt{p})) \simeq \F_{p^2}^{*,2} /\mu_{p+1} (\F_{p^2} ) \simeq \F_p^{*,2}$ of 
index $2$ (which is the stabilizer of each Igusa component) acts transitively as 
$$
\F_{p^2}^{*,2} \ni v^2 \colon (E,  P)\mapsto (E,  \langle v^{p+1} \rangle_p P)
$$
(see (\ref{diamond})) on our set of $(p-1)/4$ special 
points for which $j\equiv 1728$ in the special fiber. In particular, taking $v:=u^{(p-1)/4}$ (for $u$ some generator of
$\F_{p^2}^*$), which is a lift of 
a fourth root of unity in $\F_p^*$, we see that the stabilizer in $G_1$ of those special points is the 
order $2$ subgroup $G_1 [2]=:\langle \varepsilon \rangle$, spanned by the order 2 element $\varepsilon =v^{2(p+1)}
= -1$ in $\F_p^{*,2} \simeq G_1$. Again $x$ can be chosen (Lemma~2.3.4 of~\cite{CEdSt03}) so that  
$$
\varepsilon \colon \pi_2 \mapsto -\pi_2 , x\mapsto -x.
$$ 
We therefore consider the ring of Galois invariants:
$$
A^{\langle \varepsilon \rangle} =\bigoplus_{k\ge 0} \Z_p^{\mathrm{ur}} [\pi_2^2 ] x^{2k}     \bigoplus_{k\ge 0} 
\Z_p^{\mathrm{ur}} 
[\pi_2^2 ] \pi_2 x^{2k+1}  \simeq \Z_p^{\mathrm{ur}} [t,X,Y] /( 
t^{\frac{p-1}{2}} -p , X^2 -tY). 
$$ 
In a similar fashion as in the proof of Theorem~\ref{thNS} (cf. also the last lines of that of Theorem~\ref{thNS+}), 
one sees that $A$ is not regular, but 
blowing up once at ${\cal M}=(t,X,Y)$ produces a scheme over $\Z_p^{\mathrm{ur}} [p^{2/(p-1)}]$ which is (regular). 
Finally, taking the full Galois quotient (therefore
identifying the two Igusa parts), we obtain a scheme whose special fiber is the intersection of  the special fiber of 
the single Igusa component ($j$-line with multiplicity $p-1$) with another $\PPP^1_{\overline{\F}_p}$ with 
multiplicity $(p-1)/2$; the intersection point has ring of coordinates
$$
\Z_p^{\mathrm{ur}} [[a,b]]/(a^{p-1} b^{\frac{p-1}{2}} -p).
$$ 
%
%
%
%
%

\begin{figure}
\begin{center}

\begin{picture}(320,410)(-30,-90)

\qbezier(-50,280)(5,290)(45,278)
\qbezier(-50,281)(5,291)(45,279)
\qbezier(-50,282)(5,292)(45,280)

\put(-73,275){\scriptsize$D_2$}
\put(54,275){\scriptsize$p+1$}

\qbezier(-35,293)(-32,283)(-34,270)
\put(-37,296){\scriptsize$1$}
\put(-33,288){\scriptsize$E_2$}

\qbezier(20,301)(18,283)(21,272)
\put(21,302){\scriptsize$1$}
\put(22,292){\scriptsize$F_2$}

\qbezier(-49,306)(5,321)(46,309)
\qbezier(-49,307)(5,322)(46,310)
\qbezier(-49,308)(5,323)(46,311)

\put(-73,305){\scriptsize$D_1$}
\put(53,305){\scriptsize$p+1$}

\qbezier(-27,320)(-24,308)(-26,295)
\put(-29,323){\scriptsize$1$}
\put(-25,317){\scriptsize$E_1$}

\qbezier(30,322)(28,316)(31,295)
\put(30,323){\scriptsize$1$}
\put(33,317){\scriptsize$F_1$}

\qbezier(-48,217)(5,233)(59,218)
\qbezier(-48,216)(5,232)(59,217)
\qbezier(-48,215)(5,231)(59,216)

\put(-71,215){\scriptsize$D_k$}
\put(64,217){\scriptsize$p+1$}

\qbezier(-25,245)(-23,224)(-26,196)
\put(-24,248){\scriptsize$1$}
\put(-21,240){\scriptsize$E_k$}

\qbezier(27,245)(24,222)(26,212)
\put(28,250){\scriptsize$1$}
\put(31,242){\scriptsize$F_k$}


\qbezier(0,393)(-5,270)(-2,200) 
\qbezier(1,393)(-4,270)(-1,200)
\qbezier(-1,393)(-6,270)(-3,200)

\put(-9,398){\scriptsize$p-1$}
\put(-10,388){\scriptsize$A$}



\put(16,384){\small $j\equiv 1728$}
\put(15,385){\vector(-2,-1){12}}
\qbezier(-25,378)(5,376)(29,379)
\qbezier(-25,377)(5,375)(29,378)
\put(34,371){$\frac{p-1}{2}$}
\put(-35,375){\scriptsize$B$}


\put(20,340){\small $j\equiv 0$}
\put(17,342){\vector(-1,1){13}}
\qbezier(-25,358)(5,357)(29,359)
\qbezier(-25,357)(5,356)(29,358)
\put(34,353){$\frac{p-1}{3}$}
\put(-35,355){\scriptsize$C$}


\put(-45,+270){$\cdot$}
\put(-45,253){$\cdot$}
\put(-45,234){$\cdot$}

\put(42,266){$\cdot$}
\put(43,247){$\cdot$}
\put(44,230){$\cdot$}


\put(-20,175){$p=12k+1$}







\put(260,384){\small $j\equiv 1728$}
\put(258,382){\vector(-2,-1){14}}
\qbezier(213,373)(245,371)(269,374)
\qbezier(213,372)(245,370)(269,373)
\put(195,370){$\frac{p-1}{2}$}
\put(273,370){\scriptsize$B$}


\put(216,318){\scriptsize{$j\equiv 0$}}
\put(231,325){\vector(1,2){5}}

\qbezier(188,338)(245,341)(290,333)
\qbezier(188,337)(245,340)(290,332)
\put(297,328){\scriptsize$D_0$}
\put(164,334){$\frac{p+1}{3}$}

\qbezier(210,352)(212,343)(209,328)
\put(211,355){\scriptsize$1$}
\put(214,350){\scriptsize$E_0$}

\qbezier(256,353)(252,342)(253,327)
\put(257,354){\scriptsize$1$}
\put(260,346){\scriptsize$F_0$}


\qbezier(182,306)(245,316)(295,301)
\qbezier(182,305)(245,315)(295,300)
\qbezier(182,304)(245,314)(295,299)
\put(305,297){\scriptsize$D_1$}
\put(159,302){\scriptsize$p+1$}

\qbezier(180,279)(245,286)(296,270)
\qbezier(180,278)(245,285)(295,269)
\qbezier(180,277)(245,284)(295,268)
\put(305,268){\scriptsize$D_2$}
\put(158,274){\scriptsize$p+1$}

\qbezier(188,226)(245,236)(295,226)
\qbezier(188,225)(245,235)(295,225)
\qbezier(188,224)(245,234)(295,224)
\put(305,220){\scriptsize$D_k$}
\put(163,221){\scriptsize$p+1$}

\put(221,271){$\cdot$}
\put(222,253){$\cdot$}
\put(224,234){$\cdot$}


\qbezier(216,264)(216,273)(212,292)
\put(207,296){\scriptsize$1$}
\put(216,288){\scriptsize$E_2$}

\qbezier(202,293)(203,305)(198,322)
\put(192,319){\scriptsize$1$}
\put(201,316){\scriptsize$E_1$}

\qbezier(208,207)(211,225)(208,248)
\put(202,253){\scriptsize$1$}
\put(211,247){\scriptsize$E_k$}


\qbezier(268,267)(262,275)(263,292)
\put(258,296){\scriptsize$1$}
\put(264,288){\scriptsize$F_2$}

\qbezier(272,293)(266,305)(268,322)
\put(263,324){\scriptsize$1$}
\put(269,315){\scriptsize$F_1$}

\qbezier(275,207)(263,225)(268,248)
\put(270,253){\scriptsize$1$}
\put(273,244){\scriptsize$F_k$}


\put(232,392){\scriptsize$p-1$}
\put(225,380){\scriptsize$A$}

\qbezier(241,385)(240,270)(234,200)
\qbezier(240,385)(239,270)(233,200)
\qbezier(239,386)(238,270)(232,200)

%


\put(215,175){$p=12k+5$}






\qbezier(-50,10)(5,20)(45,8)
\qbezier(-50,11)(5,21)(45,9)
\qbezier(-50,12)(5,22)(45,10)

\put(-73,5){\scriptsize$D_2$}
\put(51,5){\scriptsize$p+1$}
\put(-74,-45){\scriptsize$D_k$}

\qbezier(-35,23)(-32,13)(-34,0)
\put(-37,26){\scriptsize$1$}
\put(-34,18){\scriptsize$E_2$}

\qbezier(20,31)(18,13)(21,2)
\put(21,32){\scriptsize$1$}
\put(21,24){\scriptsize$F_2$}




\qbezier(-49,36)(5,51)(46,39)
\qbezier(-49,37)(5,52)(46,40)
\qbezier(-49,38)(5,53)(46,41)

\put(-73,35){\scriptsize$D_1$}
\put(50,35){\scriptsize$p+1$}

\qbezier(-27,50)(-24,38)(-26,25)
\put(-29,53){\scriptsize$1$}
\put(-26,47){\scriptsize$E_{1}$}

\qbezier(30,52)(28,46)(31,25)
\put(30,53){\scriptsize$1$}
\put(33,46){\scriptsize$F_{1}$}




\qbezier(-48,-47)(5,-37)(59,-52)
\qbezier(-48,-46)(5,-36)(59,-51)
\qbezier(-48,-45)(5,-35)(59,-50)

\put(-74,-45){\scriptsize$D_k$}
\put(64,-53){\scriptsize$p+1$}

\qbezier(-25,-25)(-23,-46)(-26,-57)
\put(-24,-22){\scriptsize$1$}
\put(-21,-30){\scriptsize$E_k$}

\qbezier(27,-25)(24,-48)(26,-58)
\put(28,-20){\scriptsize$1$}
\put(31,-28){\scriptsize$F_k$}


\qbezier(0,123)(-5,0)(15,-70) 
\qbezier(1,123)(-4,0)(16,-70)
\qbezier(-1,123)(-6,0)(14,-70)

\put(-9,128){\scriptsize$p-1$}
\put(-12,117){\scriptsize$A$}



\put(16,104){\small $j\equiv 0$}
\put(15,105){\vector(-2,-1){12}}
\qbezier(-28,98)(5,96)(31,99)
\qbezier(-28,97)(5,95)(31,98)
\put(39,95){$\frac{p-1}{3}$}
\put(-36,96){\scriptsize$C$}



\qbezier(-35,70)(5,78)(45,74)
\qbezier(-35,69)(5,77)(45,73)
\put(52,73){$\frac{p+1}{2}$}

\put(4,88){\scriptsize{$j\equiv 1728$}}
\put(8,86){\vector(-1,-2){5}}

\put(-61,70){\scriptsize$D_{-1}$}

\qbezier(-21,82)(-19,66)(-21,61)
\put(-20,82){\scriptsize$1$}
\put(-19,62){\scriptsize$E_{-1}$}

\qbezier(19,82)(14,66)(19,61)
\put(22,79){\scriptsize$1$}
\put(22,61){\scriptsize$F_{-1}$}


\put(-45,-0){$\cdot$}
\put(-45,-17){$\cdot$}
\put(-45,-36){$\cdot$}

\put(42,-4){$\cdot$}
\put(43,-23){$\cdot$}
\put(44,-40){$\cdot$}


\put(-20,-95){$p=12k+7$}








\qbezier(190,99)(245,105)(289,98)
\qbezier(190,98)(245,104)(289,97)
\put(295,93){\scriptsize$D_{-1}$}
\put(169,95){$\frac{p+1}{2}$}

\qbezier(210,114)(212,108)(209,93)
\put(211,117){\scriptsize$1$}
\put(214,108){\scriptsize$E_{-1}$}

\qbezier(256,115)(252,107)(253,92)
\put(257,116){\scriptsize$1$}
\put(260,108){\scriptsize$F_{-1}$}


\qbezier(188,68)(245,71)(290,63)
\qbezier(188,67)(245,70)(290,62)
\put(297,58){\scriptsize$D_0$}
\put(164,64){$\frac{p+1}{3}$}

\qbezier(210,82)(212,73)(209,58)
\put(203,75){\scriptsize$1$}
\put(214,77){\scriptsize$E_{0}$}

\put(206,86){\scriptsize{$j\equiv 1728$}}
\put(218,91){\vector(2,1){15}}

\qbezier(256,83)(252,72)(253,57)
\put(257,79){\scriptsize$1$}
\put(255,71){\scriptsize$F_{0}$}

\put(253,86){\scriptsize{$j\equiv 0$}}
\put(251,86){\vector(-1,-2){8}}


\qbezier(182,36)(245,46)(295,31)
\qbezier(182,35)(245,45)(295,30)
\qbezier(182,34)(245,44)(295,29)
\put(305,27){\scriptsize$D_1$}
\put(160,32){\scriptsize$p+1$}

\qbezier(180,9)(245,16)(296,0)
\qbezier(180,8)(245,15)(295,-1)
\qbezier(180,7)(245,14)(295,-2)
\put(305,-2){\scriptsize$D_2$}
\put(158,4){\scriptsize$p+1$}

\qbezier(188,-44)(245,-34)(295,-44)
\qbezier(188,-45)(245,-35)(295,-45)
\qbezier(188,-46)(245,-36)(295,-46)
\put(305,-50){\scriptsize$D_k$}
\put(162,-49){\scriptsize$p+1$}

\put(221,1){$\cdot$}
\put(222,-17){$\cdot$}
\put(224,-36){$\cdot$}


\qbezier(216,-6)(216,13)(212,22)
\put(207,26){\scriptsize$1$}
\put(214,18){\scriptsize$E_2$}

\qbezier(202,23)(203,35)(198,52)
\put(192,54){\scriptsize$1$}
\put(200,46){\scriptsize$E_1$}

\qbezier(208,-63)(211,-45)(208,-22)
\put(202,-17){\scriptsize$1$}
\put(210,-25){\scriptsize$E_k$}


\qbezier(268,-3)(262,5)(263,22)
\put(258,26){\scriptsize$1$}
\put(264,18){\scriptsize$F_2$}

\qbezier(272,23)(266,35)(268,52)
\put(263,54){\scriptsize$1$}
\put(270,46){\scriptsize$F_1$}

\qbezier(275,-63)(263,-45)(268,-22)
\put(270,-17){\scriptsize$1$}
\put(273,-25){\scriptsize$F_k$}


\put(232,129){\scriptsize$p-1$}
\put(227,120){\scriptsize$A$}

\qbezier(241,122)(236,0)(246,-70)
\qbezier(240,122)(235,0)(245,-70)
\qbezier(239,123)(234,0)(244,-70)

%


\put(215,-95){$p=12k+11$}



\end{picture}
\end{center}
\caption{Special fiber at $\overline{\F}_p$ of the minimal regular model with minimal crossings of $X_{\mathrm{ns}} (p)$}  
\label{figureNSgrossier}
\end{figure}

\medskip

   Now consider ordinary points with $j\equiv 0$ (so $p\equiv 1$ mod $6$). There are $(p-1)/6$ points on each of the two 
Igusa components $\overline{M} ({\mathrm{Ig}} (p)/\{\pm 1\} )_{\overline{\F}_p} $ in the special fiber of the 
semistable $X_{\mathrm{ns}} (p)^{\mathrm{st}}$ over $\Z_ p^{\mathrm{ur}} [p^{2/(p^2 -1)}]$. Those Igusa parts can, as 
before, be descended over 
$\Z_ p^{\mathrm{ur}} [\pi_2 =p^{1/(p -1)}]$, and then the inertia of the Galois action at the points for which 
$j\equiv 0$ is the order 3 subgroup of $\F_{p^2}^* /\mu_{p+1} (\F_{p^2} )\simeq \F_p^*$, 
which is spanned by some $\omega_3 \in \F_p^*$ that is lifted to $u^{(p-1)/3} \mod \mu_{p+1} (\F_{p^2} )$ 
(with $u$ our generator of $\F_{p^2}^*$). Then $u^{(p -1)/3} \colon  \pi_2 \mapsto u^{(p^2 -1)/3} \pi_2$. 
On the other hand, Proposition~\ref{stabilisateur} implies that the Galois action of $\omega_3$ on the Igusa component is 
\begin{eqnarray}
\label{lesordres6}
(E,P)\mapsto (E,\langle u^{(p^2 -1)/6} \rangle_p P) ,
\end{eqnarray}
which is the same as that of the exceptional automorphism $[u^{(p^2 -1)/6} ]$ (action of a $6^{\mathrm{th}}$ root of unity) 
on the curve $E_0$ with $j$-invariant $0$ on $\overline{\F}_p$. From (7) in~\cite{EdixP20}, end of 
Section~2.2.2, one therefore sees that the action of $\omega_3$ on the universal deformation space $\overline{\F}_p 
[[t]]$ at the elliptic curve $E_0$ with $j$-invariant $0$ is given by $t\mapsto (u^{(p^2 -1)/6} )^2 t=\omega_3 t$. 
Summing up, if $\Z_p^{\mathrm{ur}} [\pi_2 ] [[t]]$ is the completed local ring at such a point 
over $\Z_p^{\mathrm{ur}} [\pi_2 ]$, then the inertia subgroup of Galois can be described as
\begin{eqnarray}
\label{lactionomega3}
\omega_3 \colon \pi_2 \mapsto \omega_3 \pi_2 ,\  t\mapsto \omega_3 t.
\end{eqnarray}

Therefore, computations similar to those in the previous case (when $j\equiv 1728$) show that if those points with  
$j\equiv 1728$ are ordinary at $p$ ($\equiv 1$ mod $3$), blowing up gives a regular model over $\Z_p^{\mathrm{ur}}$ 
with one extra rational component above $p$, with multiplicity $(p-1)/3$, intersecting the Igusa component in a 
singularity of shape
$$
\Z_p^{\mathrm{ur}} [[a,b]]/(a^{p-1} b^{\frac{p-1}{3}} -p).
$$

\medskip

    Now go back to the case $j\equiv 1728$, and assume it is supersingular at $p$ ($\equiv -1$ mod $4$). 
Checking~\cite{EdixP20}, Theorem~3.4 and the proof of Theorem~\ref{thNS} above, one sees that the exceptional 
Drinfeld component on the semistable model $X_{\mathrm{ns}} (p)^{\mathrm{st}}$ over $\Z_p^{\mathrm{ur}} [p^{2/(p^2 -1)} ]$ 
has equation
$$
U^2 =W^{\frac{p+1}{2}} +A_{\mathrm{ns}}
$$
(for some nonzero $A_{\mathrm{ns}} \equiv (aN/2)$ mod $p$), with notation for $U$ and $W=V^2$ as 
in~(\ref{DrinfeldXns}). The Galois action is therefore deduced 
from~(\ref{actionDeGalois}) to be $u\colon U\mapsto U, W\mapsto u^{2(p-1)} W$. The corresponding quotient Drinfeld 
component is again a rational curve, but its 
multiplicity is now $(p+1)/2$. Looking at points where Galois has inertia therefore produces singular points in the 
quotients and leads to computations completely
similar to those at the end of the proof of Theorem~\ref{thNS}. We eventually obtain that, as its plain 
Drinfeld siblings, the exceptional Drinfeld component 
with $j\equiv 1728$ is endowed with two extra components which are also copies of $\PPP^1$ with multiplicity $1$; the local 
completed singularity is
$$
\Z_p^{\mathrm{ur}} [[a,b]]/(a^{\frac{p+1}{2}} b-p).
$$ 
The ring of the intersection between the Igusa component and that special Drinfeld component is
$$
\Z_p^{\mathrm{ur}} [[a,b]]/(a^{p-1} b^{\frac{p+1}{2}} -p).
$$ 
Finally, in the case where $j\equiv 0$ is supersingular at $p$ ($\equiv -1$ mod $3$), one obtain mutatis mutandis 
a Drinfeld component which once more is a rational curve, with multiplicity $(p+1)/3$, endowed with two extra components 
which are copies of $\PPP^1$ with multiplicity $1$, and whose completed ring at the intersection point is  
$$
\Z_p^{\mathrm{ur}} [[a,b]]/(a^{\frac{p+1}{3}} b -p);
$$ 
the ring of the intersection between the Igusa component and the special Drinfeld component is
$$
\Z_p^{\mathrm{ur}} [[a,b]]/(a^{p-1} b^{\frac{p+1}{3}} -p).
$$ 
As in the case of Theorem~\ref{thNS+} (see end of its proof), singular points at the intersection of Drinfeld and Igusa 
parts may be {\em{not}} regular in the semistable model of $X_{\mathrm{ns}} (p)^{\mathrm{st}}$  over $\Z_p^{\mathrm{ur}} 
[p^{2/(p^2 -1)}]$ (see Theorem~3.4 of~\cite{EdixP20}). However, their image in the Galois quotient is regular over 
$\Z_p^{\mathrm{ur}}$ (and, more precisely, have the above shape). 

    The statement that the regular models we obtained are the {\em minimal} regular models with normal crossings is easily 
seen from the fact that no component can be contracted without losing the ``normal crossings'' property. Details will be 
displayed in the proof of next Corollary~\ref{minimalXns} (see also Figure~\ref{figureNSminimal5}). $\Box$

\end{proof}
 
\medskip 

We now describe the minimal regular models for our curves. The point is to compute self-intersections of irreducible 
components in the special fiber, then contract the projective lines with self-intersection $-1$ (Castelnuovo criterion), and 
repeat the process on the new fiber thus obtained. It turns out that one finally obtains reduced fibers. 

\medskip

(Let us recall here for the convenience of the reader that for $p\ge 5$ which one writes $p=12k+i$, with $i= 1, 5, 7$ or 
$11$, the number of supersingular $j$-invariants in characteristic $p$ is $k, k+1, k+1$ or $k+2$, respectively. In 
particular, the number of supersingular $j$-invariants which are different from $0$ and $1728$ mod $p$ is $k$ in all four 
cases.)

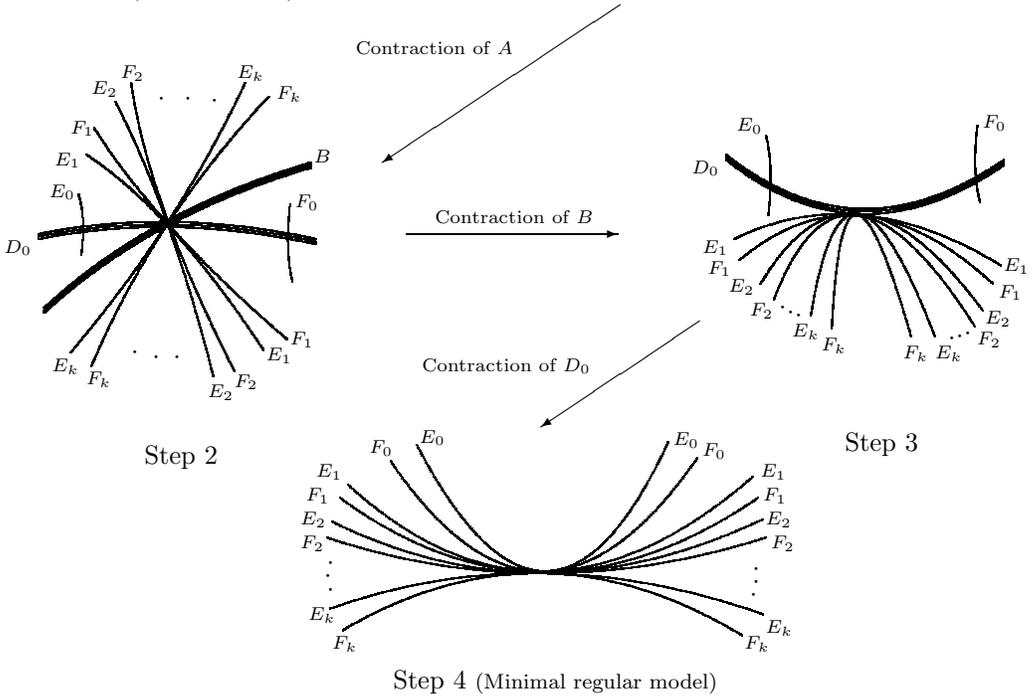
\begin{figure}
\begin{center}

\begin{picture}(320,410)(-30,-90)

\qbezier(-50,280)(5,290)(45,278)
\qbezier(-50,281)(5,291)(45,279)
\qbezier(-50,282)(5,292)(45,280)

\put(-67,275){\scriptsize$D_2$}
\put(54,275){\scriptsize$p+1$}

\qbezier(-35,293)(-32,283)(-34,270)
\put(-37,296){\scriptsize$1$}
\put(-31,272){\scriptsize$E_2$}

\qbezier(20,301)(18,283)(21,272)
\put(21,302){\scriptsize$1$}
\put(24,271){\scriptsize$F_2$}

\qbezier(-51,306)(5,321)(46,309)
\qbezier(-51,307)(5,322)(46,310)
\qbezier(-51,308)(5,323)(46,311)

\put(-67,310){\scriptsize$D_1$}
\put(53,305){\scriptsize$p+1$}

\qbezier(-27,320)(-24,308)(-26,295)
\put(-29,323){\scriptsize$1$}
\put(-24,299){\scriptsize$E_1$}

\qbezier(30,322)(28,316)(31,295)
\put(30,323){\scriptsize$1$}
\put(32,301){\scriptsize$F_1$}

\qbezier(-48,217)(5,233)(59,218)
\qbezier(-48,216)(5,232)(59,217)
\qbezier(-48,215)(5,231)(59,216)

\put(-66,215){\scriptsize$D_k$}
\put(64,217){\scriptsize$p+1$}

\qbezier(-25,245)(-23,224)(-26,206)
\put(-24,248){\scriptsize$1$}
\put(-24,205){\scriptsize$E_k$}

\qbezier(27,245)(24,222)(26,202)
\put(28,250){\scriptsize$1$}
\put(28,203){\scriptsize$F_k$}


\qbezier(0,393)(-5,270)(-2,200) 
\qbezier(1,393)(-4,270)(-1,200)
\qbezier(-1,393)(-6,270)(-3,200)

\put(-6,398){\scriptsize$p-1$}
\put(-11,390){\scriptsize$A$}



\put(-34,375){\scriptsize$B$}
\qbezier(-25,378)(5,376)(29,379)
\qbezier(-25,377)(5,375)(29,378)
\put(34,374){\scriptsize$\frac{p-1}{2}$}


\qbezier(-45,351)(5,350)(49,352)
\qbezier(-45,350)(5,349)(49,351)
\put(57,346){$\frac{p+1}{3}$}
\put(-59,348){\scriptsize$D_0$}

\qbezier(-25,365)(-23,354)(-26,336)
\put(-23,366){\scriptsize$1$}
\put(-22,358){\scriptsize$E_0$}

\qbezier(22,365)(19,342)(21,335)
\put(24,365){\scriptsize$1$}
\put(24,356){\scriptsize$F_0$}


\put(-45,+270){$\cdot$}
\put(-45,253){$\cdot$}
\put(-45,234){$\cdot$}

\put(42,266){$\cdot$}
\put(43,247){$\cdot$}
\put(44,230){$\cdot$}


\put(-70,175){\scriptsize{Minimal regular model with normal crossings}} 
\put(-25,162){\scriptsize{(cf. Theorem~\ref{thNSgrossier})}}


 
\put(83,301){\vector(1,0){76}}

\put(85,290){\scriptsize{Contraction of the $D_i$}}
\put(102,280){\scriptsize{for $i>0$}}





\qbezier(213,373)(245,371)(269,374)
\qbezier(213,372)(245,370)(269,373)
\put(195,370){\scriptsize$B$}


\qbezier(188,338)(245,341)(290,333)
\qbezier(188,337)(245,340)(290,332)
\put(174,334){\scriptsize$D_0$}

\qbezier(208,352)(210,343)(207,328)
\put(211,346){\scriptsize$E_0$}

\qbezier(256,353)(252,342)(253,327)
\put(258,345){\scriptsize$F_0$}


\qbezier(212,304)(246,310)(266,322)
\put(267,296){\scriptsize$F_1$}

\qbezier(211,324)(246,304)(266,302)
\put(207,296){\scriptsize$E_1$}

\qbezier(212,285)(246,266)(266,263)
\put(267,255){\scriptsize$F_2$}

\qbezier(211,265)(246,272)(266,285)
\put(207,255){\scriptsize$E_2$}

\put(221,251){$\cdot$}
\put(222,238){$\cdot$}
\put(224,224){$\cdot$}


\qbezier(212,227)(246,205)(266,205)
\put(267,197){\scriptsize$F_k$}

\qbezier(211,207)(246,217)(266,227)
\put(207,197){\scriptsize$E_k$}


\put(232,392){\scriptsize$A$}

\qbezier(241,385)(240,270)(234,200)
\qbezier(240,385)(239,270)(233,200)
\qbezier(239,386)(238,270)(232,200)

%


\put(215,175){Step 1}




 
\put(160,159){\vector(-3,-2){90}}

\put(60,140){\scriptsize{Contraction of $A$}}


\qbezier(-60,70)(-10,80)(45,68)
\qbezier(-60,71)(-10,81)(45,69)
\qbezier(-60,72)(-10,82)(45,70)

\put(-73,65){\scriptsize$D_0$}

\qbezier(-45,87)(-42,77)(-44,64)
\put(-56,86){\scriptsize$E_0$}

\qbezier(35,83)(33,65)(36,54)
\put(37,82){\scriptsize$F_0$}


\qbezier(-42,102)(-10,81)(25,28)
\put(-54,98){\scriptsize$E_1$}
\put(26,24){\scriptsize$E_1$}

\qbezier(-39,112)(-15,74)(34,32)
\put(-48,110){\scriptsize$F_1$}
\put(35,30){\scriptsize$F_1$}


\qbezier(-31,122)(-10,81)(6,18)
\put(-40,125){\scriptsize$E_2$}
\put(4,10){\scriptsize$E_2$}

\qbezier(-25,129)(-19,85)(14,20)
\put(-29,131){\scriptsize$F_2$}
\put(14,14){\scriptsize$F_2$}

 
\put(-15,121){$\cdot$}
\put(-5,121){$\cdot$}
\put(5,120){$\cdot$}


\qbezier(18,129)(-8,74)(-48,27)
\put(15,131){\scriptsize$E_k$}
\put(-55,20){\scriptsize$E_k$}

\qbezier(27,124)(-17,77)(-40,22)
\put(30,123){\scriptsize$F_k$}
\put(-42,15){\scriptsize$F_k$}


\put(-25,25){$\cdot$}
\put(-17,23){$\cdot$}
\put(-9,23){$\cdot$}


\qbezier(43,97)(-18,79)(-58,42)
\qbezier(43,98)(-18,80)(-58,43)
\qbezier(43,99)(-18,81)(-58,44)
\put(44,99){\scriptsize$B$}


\put(-20,-15){Step 2}




\put(79,72){\vector(1,0){80}}
\put(90,76){\scriptsize{Contraction of $B$}}


\qbezier(200,100)(250,60)(305,98)
\qbezier(200,101)(250,61)(305,99)
\qbezier(200,102)(250,62)(305,100)

\put(187,95){\scriptsize$D_0$}

\qbezier(215,109)(218,99)(216,79)
\put(204,111){\scriptsize$E_0$}

\qbezier(295,113)(293,95)(296,84)
\put(297,112){\scriptsize$F_0$}


\qbezier(203,70)(250,93)(304,60)
\put(191,65){\scriptsize$E_1$}
\put(305,58){\scriptsize$E_1$} 

\qbezier(205,62)(250,100)(301,53)
\put(193,57){\scriptsize$F_1$} 
\put(303,48){\scriptsize$F_1$}


\qbezier(213,53)(247,111)(297,43)
\put(201,50){\scriptsize$E_2$}
\put(298,38){\scriptsize$E_2$}

\qbezier(218,47)(244,116)(294,37)
\put(209,41){\scriptsize$F_2$} 
\put(295,30){\scriptsize$F_2$} 


\put(220,42){$\cdot$}
\put(223,40){$\cdot$} 
\put(226,39){$\cdot$}  


\qbezier(232,41)(245,122)(279,33)
\put(225,33){\scriptsize$E_k$}
\put(279,25){\scriptsize$E_k$}

\qbezier(240,36)(244,124)(270,33)
\put(236,28){\scriptsize$F_k$} 
\put(267,25){\scriptsize$F_k$} 


\put(284,30){$\cdot$}
\put(287,31){$\cdot$} 
\put(290,32){$\cdot$}  


\put(245,-10){Step 3}


 
\put(190,39){\vector(-3,-2){60}}

\put(85,20){\scriptsize{Contraction of $D_0$}}


\put(84,-7){\scriptsize$E_0$}
\qbezier(83,-8)(133,-104)(178,-7)
\put(180,-8){\scriptsize$E_0$}

\put(65,-11){\scriptsize$F_0$}
\qbezier(73,-14)(130,-98)(189,-13)
\put(191,-12){\scriptsize$F_0$}


\put(45,-20){\scriptsize$E_1$}
\qbezier(57,-23)(130,-90)(210,-20)
\put(213,-20){\scriptsize$E_1$}

\put(41,-29){\scriptsize$F_1$}
\qbezier(54,-28)(130,-84)(212,-28)
\put(214,-29){\scriptsize$F_1$}


\put(39,-38){\scriptsize$E_2$}
\qbezier(51,-37)(130,-76)(214,-36)
\put(215,-38){\scriptsize$E_2$}

\put(39,-47){\scriptsize$F_2$}
\qbezier(49,-43)(130,-70)(215,-43)
\put(217,-47){\scriptsize$F_2$}


\put(49,-55){$\cdot$}
\put(48,-60){$\cdot$}
\put(49,-65){$\cdot$}
 

\put(42,-74){\scriptsize$E_k$}
\qbezier(50,-70)(128,-42)(214,-72)
\put(215,-79){\scriptsize$E_k$}

\put(51,-85){\scriptsize$F_k$}
\qbezier(55,-78)(130,-34)(206,-80)
\put(207,-85){\scriptsize$F_k$}


\put(210,-57){$\cdot$}
\put(210,-62){$\cdot$}
\put(209,-67){$\cdot$}
 
  

\put(74,-100){Step 4 \footnotesize{(Minimal regular model)}}

 
\end{picture}
\end{center}
\caption{Minimal resolution of $X_{\mathrm{ns}} (p)$ in the case $p=12k+5$} 
\label{figureNSminimal5}
\end{figure}

\begin{coro}
\label{minimalXns}
Assume $p\geq 5$. Writing $p=12k+i$ with $i=1, 5, 7$ or $11$, set $n(p) =2k$ if $i=1$, $n (p) =2k+2$ if $i=5$ or $i=7$
and $n(p) =2k+4$ if $i=11$. 
    
   The {\em minimal regular model} of $X_{\mathrm{ns}} (p)$ over $\Z_p^{\mathrm{ur}}$
has a special fiber wich is reduced, made of $n(p)$ components intersecting at one common (very) singular point. Those 
components are quotients of the projective line, which might be singular at the common intersection point. See 
Figure~\ref{figureNSminimal5}.
\end{coro}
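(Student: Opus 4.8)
The plan is to treat the regular model of Theorem~\ref{thNSgrossier} as an arithmetic surface over the discrete valuation ring $\Z_p^{\mathrm{ur}}$, whose residue field $\overline{\F}_p$ is algebraically closed, and to reach both minimal models by iterated application of Castelnuovo's contractibility criterion. Over an algebraically closed residue field that criterion says simply that a vertical prime divisor $C$ can be blown down inside the regular locus precisely when $C\cong\PPP^1_{\overline{\F}_p}$ and $C^2=-1$. So the whole argument reduces to a self-intersection bookkeeping, carried out component by component and updated after each contraction.

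First I would compute the intersection matrix of the special fibre of the Theorem~\ref{thNSgrossier} model. The only input needed is already in that theorem: every listed completed local ring $\Z_p^{\mathrm{ur}}[[X,Y]]/(X^aY^b-p)$ exhibits two components, of multiplicities $a$ and $b$, crossing transversally, so it contributes $1$ to their mutual intersection number and nothing else, while distinct supersingular points contribute at distinct closed points. Feeding this into the relation $F\cdot C_j=0$ for the whole fibre $F=\sum_j m_jC_j$ (fibres are numerically trivial) gives $C_j^2=-\tfrac1{m_j}\sum_{i\ne j}m_i\,(C_i\cdot C_j)$ for each $j$. The generic Drinfeld lines $D_i$ (multiplicity $p+1$, meeting the Igusa line $A$ once and their two satellites $E_i,F_i$ once each) come out with $D_i^2=-1$; the multiplicity-one satellites $E_i,F_i$ give $-(p+1)$; the Igusa $j$-line $A$ gives a value computed from its neighbours; and the exceptional components at $j\equiv0,1728$ give the remaining values ($B^2=-2$, $D_0^2=-3$, and so on) according to the four classes $p\equiv1,5,7,11\pmod{12}$.

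Next I would run the contractions in the order suggested by Figure~\ref{figureNSminimal5}. The $D_i$ are the only $(-1)$-curves of the starting model, so contracting them yields Step~1; using the update rules $C'^2=C^2+(C\cdot D_i)^2$ and $C_1'\cdot C_2'=C_1\cdot C_2+(C_1\cdot D_i)(C_2\cdot D_i)$ one checks every surviving component is a smooth $\PPP^1$, which is the minimal model with normal crossings of the statement. For the minimal model one keeps going: the same update rules show that after Step~1 the Igusa line $A$ has become a $(-1)$-curve, and after contracting $A$ the exceptional $j\equiv1728$ component and the $j\equiv0$ Drinfeld line successively acquire self-intersection $-1$; contracting them in turn (Steps~2--4) removes exactly the high-multiplicity components $A,B,D_0$ (and their analogues in the other congruence classes), while each contraction fuses the intersection points lying on the blown-down curve. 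What survives are precisely the multiplicity-one satellites attached at the supersingular Drinfeld points, so the final fibre is reduced and all its components pass through a single image point.

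The counting of $n(p)$ then falls out: the survivors come in pairs $E_\bullet,F_\bullet$, one pair per supersingular Drinfeld component, so $n(p)$ is twice the number of supersingular $j$-invariants, namely $2k,2k+2,2k+2,2k+4$ for $p=12k+1,12k+5,12k+7,12k+11$, once one accounts for whether $j\equiv0$ and $j\equiv1728$ are ordinary or supersingular mod $p$. I expect the main obstacle to be twofold. First, the honest bookkeeping across the four congruence classes: the exceptional components at $j\equiv0,1728$ change type (an extra line on $A$ versus an extra Drinfeld line carrying its own satellites) with $p\bmod{12}$, so the self-intersections and contractibility checks must be redone in each case, making sure no stray $(-1)$-curve is left at the end. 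Second, and harder, is the local analysis at the final common point: contracting a whole string of curves meeting at or near one point can force the surviving branches to become singular there, so rather than merely counting components I would have to describe the completed local ring at that point and verify that each surviving component is a (possibly singular) quotient of $\PPP^1$, which is the content of the last clause of the statement.
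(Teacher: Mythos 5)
Your proposal is correct and follows essentially the same route as the paper's proof: Castelnuovo's criterion with self-intersections computed from triviality of the whole fibre (giving $D_i^2=-1$, $A^2=-1-k$, $B^2=-2$, $D_0^2=-3$, etc.\ in the case $p=12k+5$), contraction of the $D_i$, then $A$, then the remaining high-multiplicity components, with your update rules being exactly what the paper derives from the projection formula, and the same identification of the surviving multiplicity-one satellites (two per supersingular point) yielding $n(p)$. The paper settles the normal-crossings minimality point you leave implicit by noting that contracting $A$ produces $E_i\cdot F_i=2$, so no NC-preserving contraction is possible after Step 1.
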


\begin{proof}

One starts from the regular models obtained in Theorem~\ref{thNSgrossier}. By Castelnuovo's criterion (cf.~\cite{Liu02}, Chapter~9.3), one just needs to iteratively contract the components in the special fiber which are isomorphic to projective lines (in our case, all irreducible components are) and have self-intersection $-1$. One considers separately the four possible cases of congruence for $p$ mod $12$. Namely, take for instance some $p=12k+5$. We have labelled components on Figure~\ref{figureNSgrossier}; the only Igusa one is $A$ (with multiplicity $p-1 =12k+4$), and the horizontal ones are $B$ (with multiplicity $(p-1)/2 =6k+2$), $D_0$ (multiplicity $(p+1)/3=4k+2$), $D_1 , \dots , D_k$ (all with multiplicity $p+1=12k+6$), $E_0$ and $F_0$ (the two multiplicity $1$ components intersecting $D_0$) and similarly $E_i$, $F_i$ (the multiplicity $1$ components intersecting $D_i$) for $1\le i\le k$; see also Figure~\ref{figureNSminimal5}. Any two different components 
$X$ and $Y$ have intersection number $X\cdot Y=1$, so computing self-intersection numbers gives (as intersecting with the whole special fiber gives $0$)
$$
A^2 :=A\cdot A=\frac{-1}{12k+4} \left( (6k+2) A\cdot B + (4k+2) A\cdot D_0 +\sum_{i=1}^k (12k+6)A\cdot D_i \right) =-1-k
$$      
and similarly
$$
B^2 =-2, D_0^2 =-3, E_0^2 = F_0^2 =-4k-2\ {\mathrm{and\ for}}\ 1\le i\le k, D_i^2 =-1 \ {\mathrm{and}}\ E_i^2 =F_i^2 =-12k-6.  
$$
So one first contracts all $D_i$ for $i\ge 1$ (Step 1 in Figure~\ref{figureNSminimal5}). Let $\pi$ be the contraction 
morphism. By the projection formula (see~\cite{Liu02}, Chapter~9, Theorem~2.12), if $A' :=\pi_* (A)$ denotes the image 
of $A$ in the new scheme, then 
$$
{A'}^2 =\pi_* (A)\cdot \pi_* (A)=A\cdot \pi^* \pi_* (A)=A\cdot [A+ \sum_{i=1}^k D_i]
=-1-k +k=-1$$
(where the first two intersection products above occur on the contracted scheme, whereas the others take place in 
the original arithmetic surface). So one further contracts $A'$ (Step~2  in Figure~\ref{figureNSminimal5}). 
Note at this point that, whereas the intersection 
product of any two different components has always been $1$ so far, it is no longer the case after contracting $A'$ (one 
has $E_i \cdot F_i =2$ for $i>0$). 
By abuse of notation, we forget about the `prime' in the new scheme (so now $A'$ is denoted by $A$ again) and keep writing 
$\pi$ for the new contraction, and we let $B' :=\pi_* (B)$ denote the image of $B$, 
which satisfies
$$
{B'}^2 =B\cdot \pi^* \pi_* (B)=B\cdot [B+A] =-1
$$    
so we contract $B^{(')}$ (Step~3). Finally for $D_0' :=\pi (D_0 )$,
$$
{D_0'}^2 =D_0 \cdot [D_0 +B+2A+ \sum_{i=1}^k D_i ] =D_0^2 +2 D_0 \cdot A =-1
$$ 
(this time the first product occurs in the newest contracted arithmetic surface, whereas the last ones are in the scheme we 
started with). We therefore finally contract $D_0$ (Step~4), and we obtain the minimal regular model which is indeed reduced, 
made of $2k+2=(p+7)/6$ projective lines with multiplicity $1$, each of which intersects all the others at one highly 
singular common point. (Note that writing down the intersection matrix of our minimal regular model with normal crossings, 
and performing a Smith normalization process which parallels our successive contractions, one can obtain the intersection 
matrix of the minimal regular model.)

   The computations in the other congruence cases for $p$ are similar. Explicitely, and labelling the components as in 
Figure~\ref{figureNSgrossier}, for $p=12k+1$, one contracts the $D_i$, then $A$, then $B$, finally $C$; when $p=12k+7$, first 
contract $D_i , i\geq 1$, then $A$, $D_{-1}$ and $C$; for $p=12+11$, first the $D_i , i\geq 1$, then $A$, $D_{-1}$ and 
$D_0$. $\Box$

\end{proof}

\subsubsection{Component groups for ${J}_{\mathrm{ns}} (p)$}

\begin{propo}
\label{componentsXns}
Let $p\ge 17$ be a prime, and denote by ${\mathcal J}_{\mathrm{ns}} (p)$ the N\'eron model over $\Z_p^{\mathrm{ur}}$ of the Jacobian of $X_{\mathrm{ns}} (p)$. The group of connected components 
$({\mathcal J}_{\mathrm{ns}} (p)/{\mathcal J}^0_{\mathrm{ns}} (p))(\overline{\F}_p)$ of the special fiber of
${\mathcal J}_{\mathrm{ns}} (p)$ is: 
\begin{itemize}
\item $\Z /\frac{p+1}{2}\Z \times \Z /12(p+1)\Z \times \left( \Z /(p^ 2 -1)\Z \right)^{(p-25)/12}$ if $p=12k+1$;
\item $\Z /\frac{p+1}{6}\Z \times \Z /4(p+1)\Z \times \left( \Z /(p^ 2 -1)\Z \right)^{(p-17)/12}$ if $p=12k+5$;
\item $\Z /\frac{p+1}{4}\Z \times \Z /6(p+1)\Z \times \left( \Z /(p^ 2 -1)\Z \right)^{(p-19)/12}$ if $p=12k+7$;
\item $\Z /\frac{p+1}{12}\Z \times \Z /2(p+1)\Z \times \left( \Z /(p^ 2 -1)\Z \right)^{(p-11)/12}$ if $p=12k+11$.
\end{itemize}
For $p=2, 3, 5,$  the component groups are trivial, and for $p=7, 11$ and $13$ they are $\Z /2\Z$, $\Z /24\Z$ and $\Z /7\Z$, 
respectively.
\end{propo}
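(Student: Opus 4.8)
The engine of the proof is Raynaud's description of the component group of the Néron model of a Jacobian in terms of the intersection matrix of a regular model (see \cite{Liu02}, Chapter~10, and the analogous computation in \cite{CEdSt03}). Write the special fibre of the model of Theorem~\ref{thNSgrossier} as $\sum_i d_i \Gamma_i$, with intersection matrix $M=(\Gamma_i\cdot\Gamma_j)$ and multiplicity vector $\mathbf{d}=(d_i)_i$. Since the components $E_i,F_i$ occur with multiplicity $1$ we have $\gcd_i d_i=1$, so that
$$
({\mathcal J}_{\mathrm{ns}}(p)/{\mathcal J}^0_{\mathrm{ns}}(p))(\overline{\F}_p)\ \cong\ \ker(\mathbf{d}\colon \Z^n\to\Z)/\mathrm{im}(M)\ \cong\ \mathrm{coker}(M^\circ),
$$
where $M^\circ$ is obtained from $M$ by deleting the row and column of one fixed multiplicity-$1$ component. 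The plan is to feed into this the explicit fibre of Theorem~\ref{thNSgrossier}, exploiting its star shape.

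All intersection data is read off Theorem~\ref{thNSgrossier} and Corollary~\ref{minimalXns}: distinct adjacent components meet transversally in a single point, so the off-diagonal entries of $M$ are $0$ or $1$, and the self-intersections are exactly those of Corollary~\ref{minimalXns}. The dual graph is a star centred at the Igusa component $A$ (multiplicity $p-1$): to $A$ are attached the $k$ generic Drinfeld lines $D_i$ (multiplicity $p+1$, $D_i^2=-1$), each carrying two multiplicity-$1$ leaves $E_i,F_i$ with $E_i^2=F_i^2=-(p+1)$, together with the special components at $j\equiv 1728$ and $j\equiv 0$. The latter depend on $p\bmod 12$: either an ordinary extra vertical line meeting $A$ with multiplicity $(p-1)/2$, resp.\ $(p-1)/3$, or a supersingular Drinfeld line of multiplicity $(p+1)/2$, resp.\ $(p+1)/3$, carrying its own two multiplicity-$1$ leaves.

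To evaluate the cokernel I would eliminate variables branch by branch. In $\Z^n/\mathrm{im}(M)$ the row attached to a leaf $E$ of a Drinfeld line $D$ of multiplicity $d$ reads $-d\,E+D=0$, i.e.\ $D\equiv d\,E$; applied to both leaves of a generic $D_i$ this gives $D_i\equiv (p+1)E_i\equiv (p+1)F_i$, hence $(p+1)(E_i-F_i)\equiv 0$, and the row at $D_i$ then yields $A\equiv D_i-E_i-F_i\equiv pE_i-F_i$. Comparing these expressions for $A$ across the $k$ generic branches, together with the single remaining relation coming from the centre $A$ and the constraint defining $\ker(\mathbf{d})$, presents $\Phi$ by an explicit integer matrix of small size. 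Its Smith normal form contributes essentially one factor $\Z/(p^2-1)\Z$ per generic branch, the $p-1$ and $p+1$ combining through $A\equiv pE_i-F_i$ and $(p+1)(E_i-F_i)\equiv 0$; each \emph{ordinary} special branch (a bare extra line on $A$) imposes one further relation that lowers this exponent by one and contributes an explicit cyclic factor, while each \emph{supersingular} special branch behaves like a generic branch of reduced multiplicity. Running this separately for $p\equiv 1,5,7,11 \pmod{12}$ reproduces the counts $(p-25)/12$, $(p-17)/12$, $(p-19)/12$, $(p-11)/12$ of $\Z/(p^2-1)\Z$-factors and the two recorded cyclic factors $\Z/\tfrac{p+1}{d}\Z$ and $\Z/c(p+1)\Z$.

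The main obstacle is the linear-algebra bookkeeping rather than any conceptual point: one must keep the four congruence classes apart, since the $j\equiv 0$ and $j\equiv 1728$ branches switch between the ordinary and supersingular shapes, and in each case diagonalise the resulting relation matrix carefully enough to extract the precise cyclic decomposition (not merely its order). Finally, the small primes $p=5,7,11,13$, where some generic branches are absent ($k$ small) or the exponents in the general formula would be nonpositive, fall outside the uniform computation and must be handled directly from their explicit fibres, yielding the exceptional groups $\Z/16\Z$, $\Z/2\Z$, $\Z/24\Z$ and $\Z/7\Z$.
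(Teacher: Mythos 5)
Your proposal is correct and takes essentially the same route as the paper: both invoke Raynaud's theorem (\cite{BLR}, Ch.~9.6 / \cite{Liu02}) to identify the component group with $\ker(\mathbf{d})/\mathrm{im}(M)$ for the intersection matrix $M$ of a regular model of $X_{\mathrm{ns}}(p)$, and then compute this finite group by integer row reduction (Smith normal form), with the four congruence classes mod $12$ and the small primes treated separately. The only immaterial differences are that the paper performs the Smith reduction on the minimal regular model with normal crossings (after contracting the $D_i$, in the basis $\{E_1,F_1,\dots,E_k,F_k,A,D_0,E_0,F_0,B\}$), whereas you work with the uncontracted model of Theorem~\ref{thNSgrossier} and organize the same elimination branch by branch via the leaf relations $D\equiv dE$ and the deletion of one multiplicity-one row and column.
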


\begin{proof} By \cite{BLR}, Chapter~9.6, Corollary~3, using any of our regular models, we just need to write down the 
intersection matrices at the special fiber and put them in Smith normal form in order to compute the elementary divisors. To 
be explicit, let us again treat a particular case, when $p=12k+5$. After Step 1 of the contraction process 
(Figure~\ref{figureNSminimal5}), one obtains an intersection matrix which, in the components basis $\{ E_1 ,F_1 , E_2 , F_2 , 
\dots , E_k , F_k , A, D_0 , E_0 , F_0 , B\}$, has shape 
$$
\left(
\begin{array}{ccccccccccc}
-12k-5 & 1 & 0 & \cdots & \cdots & 0 & 1 & 0 & 0 & 0 & 0 \\
1 & -12k-5 & 0 & \cdots & \cdots & 0 & 1 &0 &0 &0 & 0\\
0 & 0 &  \vdots & \cdots &  \cdots & 0 & 1 &  0 & 0 & 0& 0\\
  \vdots &  \vdots &  \cdots & \cdots &  \cdots &  \vdots & \vdots & \vdots &  \vdots & \vdots & \vdots \\
 \vdots &  \vdots &  \cdots & \cdots &  \cdots &  \vdots & \vdots & \vdots &  \vdots & \vdots & \vdots \\
0  & 0 &  \vdots & \cdots &  0 &  0 & 1 & 0 &  0 & 0 & 0\\
0  & 0 &  \vdots & \cdots & -12k-5 & 1 & 1 & 0 &  0 & 0 & 0 \\
0  & 0 &  \vdots & \cdots & 1 & -12k-5 & 1 & 0 &  0 & 0 & 0\\
1 & 1 & \vdots & \cdots & 1 & 1 & -1 & 1 & 0 & 0 & 1 \\
0 & 0 & \vdots & \cdots & 0 & 0 & 1 & -3 & 1 & 1 & 0 \\ 
0 & 0 & \vdots & \cdots & 0 & 0 & 0 & 1 & -4k-2 & 0 & 0 \\ 
0 & 0 & \vdots & \cdots & 0 & 0 & 0 & 1 & 0 & -4k-2 & 0 \\ 
0 & 0 & \vdots & \cdots & 0 & 0 & 1 & 0 & 0 & 0 & -2 
\end{array}
\right)
$$
(see proof of Corollary~\ref{minimalXns}). After going through a lengthy but elementary manual Smith normal process, this 
does yield\footnote{One finds in GP-Pari a function {\tt matsnf} (for ``matrices Smith normal form'') which helps making 
safety checks with our formulas for small values of $p$.} elementary divisors with quotient group (when $k\ge 1$):
$$
(\Z /(2k+1)\Z )\times (\Z /24(2k+1)\Z )\times (\Z /24(2k+1)(3k+1)\Z )^{k-1} . \hspace{1cm} \Box
$$

\end{proof}

\subsection{$X_{\mathrm{ns}}^+ (p)$}

\begin{theo} 
\label{thNS+grossier} 
Let $p\geq 13$ be a prime, and let $X_{\mathrm{ns}}^+ (p)$ be the coarse modular curve over $\Q$ associated with 
$\Gamma_{\mathrm{ns}}^+ (p)$.

\medskip
 
  The {\em minimal regular model with normal crossings} of the curve $X_{\mathrm{ns}} (p)$ over $\Z_p^{\mathrm{ur}}$ has a 
special fiber made of one vertical Igusa component, intersecting horizontal chains of Drinfeld components above each 
supersingular point.

\medskip 
 
  That vertical part is a projective ($j$-)line, with multiplicity $(p-1)/2$.  

\medskip

If ${\cal S}$ is the number of supersingular $j$-invariants in $\F_{p^2}$, there are ${\cal S}$ horizontal 
chains of (Drinfeld) components. Unless the corresponding $j$-invariant has exceptional geometric automorphisms (that 
is, $j\equiv 0$ or $1728$ {\rm mod} $p$), those Drinfeld curves are all copies of  a rational curve $\PPP^1$ with 
multiplicity $p+1$, on which,
at one point, stems another $\PPP^1$ with multiplicity $1$, and at another point still another $\PPP^1$ rises, that one with 
multiplicity $(p+1)/2$. 

\medskip

  Depending on the residue class of $p\mod 12$, the vertical Igusa components moreover have the following additional 
equipments: 
\begin{itemize}
\item {\bf If $p\equiv 1$ mod $12$}, the vertical Igusa component intersects transversally an additional chain of 
components above the ordinary point $j\equiv 0\mod p$, which is made up of a projective line, with multiplicity $(p-1)/3$, 
followed by another $\PPP^1$, with multiplicity $(p-1)/6$.
\item  {\bf If $p\equiv 5$ mod $12$}, then the vertical Igusa component has one 
exceptional Drinfeld component, at the supersingular $j\equiv 0\mod p$, which is a projective line with multiplicity 
$(p+1)/3$, intersecting itself two extra projective lines, one with multiplicity $1$, the other one with multiplicity 
$(p+1)/6$.
\item  {\bf If $p\equiv 7$ mod $12$}, the vertical Igusa component intersects transversally an additional chain of components 
at the ordinary $j\equiv 0\mod p$, as in the case $p\equiv 1\ {\mathrm{mod}}\ 12$: a projective line, with multiplicity 
$(p-1)/3$, followed by another $\PPP^1$, with multiplicity $(p-1)/6$. The exceptional Drinfeld component at the supersingular 
$j\equiv 1728 \mod p$ is just a projective line with multiplicity $1$. 
\item  {\bf If $p\equiv 11$ mod $12$}, there are two exceptional Drinfeld components at the supersingular $j\equiv 0\mod p$ 
and $j\equiv 1728$, which are as in the respective cases above.   
\end{itemize}
All intersections points in the special fiber between components with multiplicity $a$ and $b$, say, are normal crossings 
points, with local equations
$$
\Z_p^{\mathrm{ur}} [[X,Y]]/(X^a \cdot Y^b -p ).
$$
  
    (See Figure~\ref{figureNSgrossier+}.) 
\end{theo}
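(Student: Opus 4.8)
\paragraph*{Proof (plan)} The plan is to run the machinery of Theorems~\ref{thNS+} and~\ref{thNSgrossier} in tandem. I would start from the semistable model of $X_{\mathrm{ns}}^+(p)$ over $\Z_p^{\mathrm{ur}}[p^{2/(p^2-1)}]$ furnished by~\cite{EdixP20}, form the quotient by $G=\mathrm{Gal}(\Q_p^{\mathrm{ur}}(p^{2/(p^2-1)})/\Q_p^{\mathrm{ur}})\simeq\F_{p^2}^*/\{\pm1\}$, and then resolve the resulting quotient singularities. Conceptually the statement is the coarse version of Theorem~\ref{thNS+}, corrected at the two $j$-invariants with extra automorphisms exactly as Theorem~\ref{thNSgrossier} corrects Theorem~\ref{thNS}. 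Hence away from $j\equiv 0,1728$ nothing new occurs, and all the real work is local over those two points.

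I would first dispose of the generic locus. Feeding the stabilizers of Proposition~\ref{stabilisateur} into the computation of Theorem~\ref{thNS+} shows, verbatim, that the (single, after the final order-$2$ identification) Igusa component descends to the $j$-line with multiplicity $(p-1)/2$, and that over each supersingular $j\not\equiv 0,1728$ the Drinfeld chain consists of the $\PPP^1$ of multiplicity $p+1$ carrying, at its two Galois-fixed finite points, branches of multiplicities $1$ and $(p+1)/2$ coming from the model~(\ref{DrinfeldNS+1}). The normal-crossing rings $\Z_p^{\mathrm{ur}}[[X,Y]]/(X^aY^b-p)$ are then read off from the very same explicit blow-ups as in Theorem~\ref{thNS+}.

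The heart of the matter is the local analysis over $j\equiv 0$ and $j\equiv 1728$. I would invoke the classical criteria that $E_{1728}$ is supersingular exactly for $p\equiv 3\bmod 4$ and $E_0$ exactly for $p\equiv 2\bmod 3$, so the four residues $p\equiv 1,5,7,11\bmod 12$ match the four ordinary/supersingular combinations in the statement. The guiding principle I would make precise is that the extra automorphism of $E_0$ has order $3$ modulo $\{\pm1\}$ whereas that of $E_{1728}$ has order $2$: the latter is \emph{absorbed} by the normalizer involution $w$ generating $\Gamma_{\mathrm{ns}}^+(p)/\Gamma_{\mathrm{ns}}(p)$ (concretely, on a suitable component $w$ realizes the order-$2$ automorphism $[i]$), so that over $j\equiv 1728$ the normalizer quotient either removes the exceptional component altogether (ordinary case, $p\equiv 1,5\bmod 12$) or merges the two multiplicity-$1$ branches of the non-normalizer picture into a single one on the Drinfeld $\PPP^1$ of multiplicity $(p+1)/2$ (supersingular case, $p\equiv 7,11\bmod 12$). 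Over $j\equiv 0$, by contrast, the order-$3$ symmetry is coprime to $w$ and both survive: the order-$3$ part reproduces the $(p-1)/3$ (resp. $(p+1)/3$) component of Theorem~\ref{thNSgrossier}, while the surviving order-$2$ from $w$ attaches one further component, of multiplicity $(p-1)/6$ in the ordinary case and $(p+1)/6$ in the supersingular case.

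The step I expect to be the main obstacle is carrying out these resolutions explicitly. In each case one must identify precisely the cyclic inertia group acting on the completed local ring $\Z_p^{\mathrm{ur}}[\pi][[t]]$---the inertia of $G$ together with the relevant element of $\F_{p^2}^*\times\Gamma_{\mathrm{ns}}^+(p)$ acting on the deformation parameter $t$ through equation~(7) of~\cite{EdixP20} (by $t\mapsto\omega_3 t$ at $j\equiv 0$, by $t\mapsto -t$ at $j\equiv 1728$)---then pass to invariants and blow up. The novelty relative to Theorems~\ref{thNS+} and~\ref{thNSgrossier} is that over $j\equiv 0$ the effective inertia has order $6$ rather than $2$ or $3$, so the invariant ring is no longer resolved by a single blow-up: a second blow-up is needed and produces the chain of two rational curves, with the multiplicities $(p-1)/3,(p-1)/6$ (resp. $(p+1)/3,(p+1)/6$) forced by the powers of $p$ appearing in the invariant monomials. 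Once every local model is regular, reading the two-variable equation at each crossing gives the asserted shape $\Z_p^{\mathrm{ur}}[[X,Y]]/(X^aY^b-p)$, and assembling the local pictures over all supersingular points yields Figure~\ref{figureNSgrossier+}.
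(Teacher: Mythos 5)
Your plan coincides with the paper's actual proof in structure and in substance: start from the semistable model of Theorem~3.5 of~\cite{EdixP20}, quotient by $G\simeq \F_{p^2}^*/\{\pm 1\}$, dispose of the generic locus by quoting the computations of Theorems~\ref{thNS+} and~\ref{thNSgrossier}, and concentrate the new work at $j\equiv 0$ and $j\equiv 1728$. Your qualitative mechanism also matches the paper's: at $j\equiv 1728$ the order-$2$ automorphism is absorbed by the normalizer (so nothing happens in the ordinary case, and in the supersingular case the two multiplicity-$1$ branches of the $X_{\mathrm{ns}}(p)$ picture become a single one on the $(p+1)/2$ Drinfeld line), while at $j\equiv 0$ the order-$3$ symmetry survives and an extra ``half'' component of multiplicity $(p\mp 1)/6$ appears.

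Two of your concrete claims about the step you yourself identify as the heart of the matter are, however, not how the computation goes. First, in the paper's bookkeeping the inertia at an ordinary $j\equiv 0$ point has order $3$, not $6$: after quotienting by the subgroup $\mu_{2(p+1)}(\F_{p^2})/\{\pm 1\}$ acting trivially on the Igusa part, the model lives over $\Z_p^{\mathrm{ur}}[\pi_2^2]$ with $\pi_2^2=p^{2/(p-1)}$, and the residual stabilizer is generated by $\zeta_3$ acting as in~(\ref{lactionzeta3}), namely $\pi_2^2\mapsto \zeta_3^2\pi_2^2$, $t\mapsto \zeta_3 t$. The novelty relative to~(\ref{lactionomega3}) is not the order of the group but that the action is no longer scalar: the weights are $(2,1)$ instead of $(1,1)$, precisely because the base uniformizer is now the square $\pi_2^2$ --- this is where your ``order $2$ from $w$'' really enters, as base descent rather than as extra inertia. (Your order-$6$ group is the stabilizer one sees if one declines to descend the base first; it has the same invariant ring, so this is a defensible bookkeeping, but it is not what ``inertia'' means after the descent.) Second, no second blow-up is needed: the invariant ring is $\Z_p^{\mathrm{ur}}[\pi_6][X,Y]/(Y^3-\pi_6 X)$ with $X=t^3$, $Y=\pi_2^2 t$, $\pi_6=p^{6/(p-1)}$, an $A_2$-type singularity, and a \emph{single} blow-up at $(\pi_6,X,Y)$ is already regular; its exceptional locus consists of the two $\PPP^1$'s at once, of multiplicities $2$ and $1$ over $\Z_p^{\mathrm{ur}}[\pi_6]$, hence $(p-1)/3$ and $(p-1)/6$ over $\Z_p^{\mathrm{ur}}$ (and similarly $(p+1)/3$, $(p+1)/6$ on the supersingular side). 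Neither point would derail you --- carrying out your plan would produce exactly this invariant ring and reveal that one blow-up suffices --- but as stated both predictions are inaccurate.
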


\begin{rema}
\label{petitsminimauxNsplit+}
{\rm{As in Remark~\ref{petitsminimauxNsplit}, note that Theorem~\ref{thNS+grossier} in fact gives regular models for all 
$p\geq 5$, but we have discarded primes $p\leq 11$ 
from its statement because of the non-minimality of our regular model with normal crossings in those cases. However, 
$X_{\mathrm{ns}}^+ (p)$ is just a projective line if $p\leq 7$ (including $p=2, 3$), and for $p=11$ the minimal regular 
model with normal crossings is obtained from the regular model above by contracting the vertical Igusa component (wich has 
multiplicity $(p-1)/2=5$).}}
\end{rema}

\begin{rema}
\label{etmaintenantlegenredeNsplit+}
{\rm{As a sanity check, we can notice that Theorem~\ref{thNS+grossier} provides SNC-models (in the terminology 
of~\cite{Dino90}), with genus-$0$ irreducible components in their special fiber. So loc. cit. shows that the genus of the 
$X_{\mathrm{ns}}^+ (p)$ can be read on the dual graph $G$ of their special fiber as
$$
g(X_{\mathrm{ns}}^+ (p)) =\beta +\frac{1}{2} \sum_{v} (r_v -1)(d_v -2),
$$
where $\beta$ is the Betti number of $G$ (which here is $0$), and the above sum runs through the vertices $v$ of $G$, for 
which $d_v$ denotes the degree of $v$ and $r_v$ is the multiplicity of the associated irreducible component 
(see~\cite{Dino90}, formula on top of p. ~150). With that formula in hand, it is then easy to compute genera for each of the 
four congruence cases mod $12$ of Theorem~\ref{thNS+grossier}, and check that one finds back the formula:
$$
g(X_{\mathrm{ns}}^+ (p))=\frac{p^2 -10\, p +23 +6\left( {-1 \atop p} \right) +4\left( {-3 \atop p} \right)}{24} 
$$
obtained on the generic fibers using Riemann-Hurwitz formula (see e.g.~\cite{Ma76}, p. 117). 
}}
\end{rema}

\begin{figure}
\begin{center}

\begin{picture}(320,410)(-30,-90)

\qbezier(-50,280)(5,290)(45,278)
\qbezier(-50,281)(5,291)(45,279)
\qbezier(-50,282)(5,292)(45,280)

\put(-73,275){\scriptsize$D_2$}
\put(52,275){\scriptsize$p+1$}

\qbezier(-32,293)(-29,283)(-31,270)
\qbezier(-31,293)(-28,283)(-30,270)
\put(-44,296){$\frac{p+1}{2}$}

\qbezier(20,301)(18,283)(21,272)
\put(21,302){\scriptsize$1$}

\qbezier(-49,306)(5,321)(46,309)
\qbezier(-49,307)(5,322)(46,310)
\qbezier(-49,308)(5,323)(46,311)

\put(-73,305){\scriptsize$D_1$}
\put(52,305){\scriptsize$p+1$}

\qbezier(-27,320)(-24,308)(-26,295)
\qbezier(-26,320)(-23,308)(-25,295)
\put(-29,323){$\frac{p+1}{2}$}

\qbezier(30,322)(28,316)(31,295)
\put(30,323){\scriptsize$1$}

\qbezier(-48,217)(5,233)(59,218)
\qbezier(-48,216)(5,232)(59,217)
\qbezier(-48,215)(5,231)(59,216)

\put(-66,215){\scriptsize$D_k$}
\put(64,217){\scriptsize$p+1$}

\qbezier(-25,245)(-23,224)(-26,206)
\qbezier(-24,245)(-22,224)(-25,206)
\put(-27,248){$\frac{p+1}{2}$}

\qbezier(27,245)(24,222)(26,212)
\put(28,250){\scriptsize$1$}


\qbezier(0,373)(-5,270)(-2,200) 
\qbezier(1,373)(-4,270)(-1,200)
\qbezier(-1,373)(-6,270)(-3,200)

\put(-9,384){$\frac{p-1}{2}$}



\put(20,340){\small{$j\equiv 0$}}
\put(17,342){\vector(-1,1){13}}
\qbezier(-25,358)(5,357)(29,359)
\qbezier(-25,357)(5,356)(29,358)
\put(34,353){$\frac{p-1}{3}$}

\qbezier(-17,370)(-14,368)(-16,345)
\qbezier(-16,370)(-13,368)(-15,345)
\put(-32,373){$\frac{p-1}{6}$}


\put(-45,+270){$\cdot$}
\put(-45,253){$\cdot$}
\put(-45,234){$\cdot$}

\put(42,266){$\cdot$}
\put(43,247){$\cdot$}
\put(44,230){$\cdot$}


\put(-20,175){$p=12k+1$}







\qbezier(188,338)(245,341)(290,333)
\qbezier(188,337)(245,340)(290,332)
\put(164,334){$\frac{p+1}{3}$}

\qbezier(210,352)(212,343)(209,328)
\qbezier(211,352)(213,343)(210,328)
\put(206,355){$\frac{p+1}{6}$}
\qbezier(256,353)(252,342)(253,327)
\put(257,354){\scriptsize$1$}

\put(214,318){\scriptsize{$j\equiv 0$}}
\put(226,325){\vector(1,1){10}}


\qbezier(182,306)(245,316)(295,301)
\qbezier(182,305)(245,315)(295,300)
\qbezier(182,304)(245,314)(295,299)
\put(305,297){\scriptsize$D_1$}
\put(157,302){\scriptsize$p+1$}

\qbezier(180,279)(245,286)(296,270)
\qbezier(180,278)(245,285)(295,269)
\qbezier(180,277)(245,284)(295,268)
\put(305,268){\scriptsize$D_2$}
\put(157,274){\scriptsize$p+1$}

\qbezier(188,226)(245,236)(295,226)
\qbezier(188,225)(245,235)(295,225)
\qbezier(188,224)(245,234)(295,224)
\put(305,220){\scriptsize$D_k$}
\put(164,221){\scriptsize$p+1$}

\put(221,271){$\cdot$}
\put(222,253){$\cdot$}
\put(224,234){$\cdot$}


\qbezier(216,264)(216,273)(212,292)
\qbezier(217,264)(217,273)(213,292)
\put(207,296){$\frac{p+1}{2}$}

\qbezier(200,293)(201,305)(196,322)
\qbezier(199,293)(200,305)(195,322)
\put(192,324){$\frac{p+1}{2}$}

\qbezier(208,207)(211,225)(208,248)
\qbezier(207,207)(210,225)(207,248)
\put(202,253){$\frac{p+1}{2}$}


\qbezier(268,267)(262,275)(263,292)
\put(258,296){\scriptsize$1$}

\qbezier(272,293)(266,305)(268,322)
\put(263,324){\scriptsize$1$}

\qbezier(275,207)(263,225)(268,248)
\put(270,253){\scriptsize$1$}


\put(232,380){$\frac{p-1}{2}$}

\qbezier(241,370)(240,270)(234,200)
\qbezier(240,370)(239,270)(233,200)
\qbezier(239,370)(238,270)(232,200)

%


\put(215,175){$p=12k+5$}







\qbezier(-50,10)(5,20)(45,8)
\qbezier(-50,11)(5,21)(45,9)
\qbezier(-50,12)(5,22)(45,10)

\put(-73,5){\scriptsize$D_2$}
\put(54,5){\scriptsize$p+1$}

\qbezier(-35,23)(-32,13)(-34,0)
\qbezier(-36,23)(-33,13)(-35,0)
\put(-48,25){$\frac{p+1}{2}$}

\qbezier(20,31)(18,13)(21,2)
\put(21,32){\scriptsize$1$}

\qbezier(-49,36)(5,51)(46,39)
\qbezier(-49,37)(5,52)(46,40)
\qbezier(-49,38)(5,53)(46,41)

\put(-73,35){\scriptsize$D_1$}
\put(54,35){\scriptsize$p+1$}

\qbezier(-27,50)(-24,38)(-26,25)
\qbezier(-26,50)(-23,38)(-25,25)
\put(-29,53){$\frac{p+1}{2}$}

\qbezier(30,52)(28,46)(31,25)
\put(30,53){\scriptsize$1$}

\qbezier(-48,-47)(5,-37)(59,-52)
\qbezier(-48,-46)(5,-36)(59,-51)
\qbezier(-48,-45)(5,-35)(59,-50)

\put(-68,-52){\scriptsize$D_k$}
\put(65,-53){\scriptsize$p+1$}

\qbezier(-24,-25)(-22,-46)(-25,-57)
\qbezier(-25,-25)(-23,-46)(-26,-57)
\put(-24,-22){$\frac{p+1}{2}$}

\qbezier(27,-25)(24,-48)(26,-58)
\put(28,-20){\scriptsize$1$}


\qbezier(0,123)(-5,0)(15,-70) 
\qbezier(1,123)(-4,0)(16,-70)
\qbezier(-1,123)(-6,0)(14,-70)

\put(-9,131){$\frac{p-1}{2}$}



\put(16,104){\small{$j\equiv 0$}}
\put(15,105){\vector(-2,-1){12}}
\qbezier(-28,98)(5,96)(31,99)
\qbezier(-28,97)(5,95)(31,98)
\put(36,92){$\frac{p-1}{3}$}

\qbezier(-17,105)(-14,93)(-16,80)
\qbezier(-16,105)(-13,93)(-15,80)
\put(-32,113){$\frac{p-1}{6}$}



\qbezier(-40,70)(0,78)(40,74)
\put(43,74){\scriptsize$1$}

\put(20,64){\small{$j\equiv 1728$}}
\put(16,67){\vector(-2,1){10}}


\put(-45,-0){$\cdot$}
\put(-45,-17){$\cdot$}
\put(-45,-36){$\cdot$}

 \put(42,-4){$\cdot$}
\put(43,-23){$\cdot$}
\put(44,-40){$\cdot$}


\put(-20,-95){$p= 12k+7$}




\qbezier(190,99)(245,105)(289,98)
\put(178,97){\scriptsize$1$}

\put(258,108){\small{$j\equiv 1728$}}
\put(255,109){\vector(-2,-1){12}}



\qbezier(188,68)(245,71)(290,63)
\qbezier(188,67)(245,70)(290,62)

\put(164,64){$\frac{p+1}{3}$}

\put(213,48){\scriptsize{$j\equiv 0$}}
\put(225,54){\vector(1,1){10}}

\qbezier(210,82)(212,73)(209,58)
\qbezier(211,82)(213,73)(210,58)
\put(208,85){$\frac{p+1}{\mathrm{6}}$}

\qbezier(256,83)(252,72)(253,57)
\put(259,84){\scriptsize$1$}


\qbezier(182,36)(245,46)(295,31)
\qbezier(182,35)(245,45)(295,30)
\qbezier(182,34)(245,44)(295,29)
\put(305,27){\scriptsize$D_1$}
\put(157,32){\scriptsize$p+1$}

\qbezier(180,9)(245,16)(296,0)
\qbezier(180,8)(245,15)(295,-1)
\qbezier(180,7)(245,14)(295,-2)
\put(305,-2){\scriptsize$D_2$}
\put(156,4){\scriptsize$p+1$}

\qbezier(188,-44)(245,-34)(295,-44)
\qbezier(188,-45)(245,-35)(295,-45)
\qbezier(188,-46)(245,-36)(295,-46)
\put(305,-50){\scriptsize$D_k$}
\put(164,-49){\scriptsize$p+1$}

\put(221,1){$\cdot$}
\put(222,-17){$\cdot$}
\put(224,-36){$\cdot$}


\qbezier(216,-6)(216,13)(212,22)
\qbezier(215,-6)(215,13)(211,22)
\put(207,26){$\frac{p+1}{2}$}

\qbezier(202,23)(203,35)(198,52)
\qbezier(201,23)(202,35)(197,52)
\put(186,52){$\frac{p+1}{2}$}

\qbezier(208,-63)(211,-45)(208,-22)
\qbezier(209,-63)(212,-45)(209,-22)
\put(202,-17){$\frac{p+1}{2}$}


\qbezier(268,-3)(262,5)(263,22)
\put(258,26){\scriptsize$1$}

\qbezier(272,23)(266,35)(268,52)
\put(263,54){\scriptsize$1$}

\qbezier(275,-63)(263,-45)(268,-22)
\put(270,-17){\scriptsize$1$}


\put(232,132){$\frac{p-1}{2}$}

\qbezier(241,122)(236,0)(246,-70)
\qbezier(240,122)(235,0)(245,-70)
\qbezier(239,123)(234,0)(244,-70)

%


\put(215,-95){$p=12k+11$}


\end{picture}
\end{center}
\caption{Special fiber above $\overline{\F}_p$ of the minimal regular model with normal crossings of $X_{\mathrm{ns}}^+ (p)$}  
\label{figureNSgrossier+}
\end{figure}
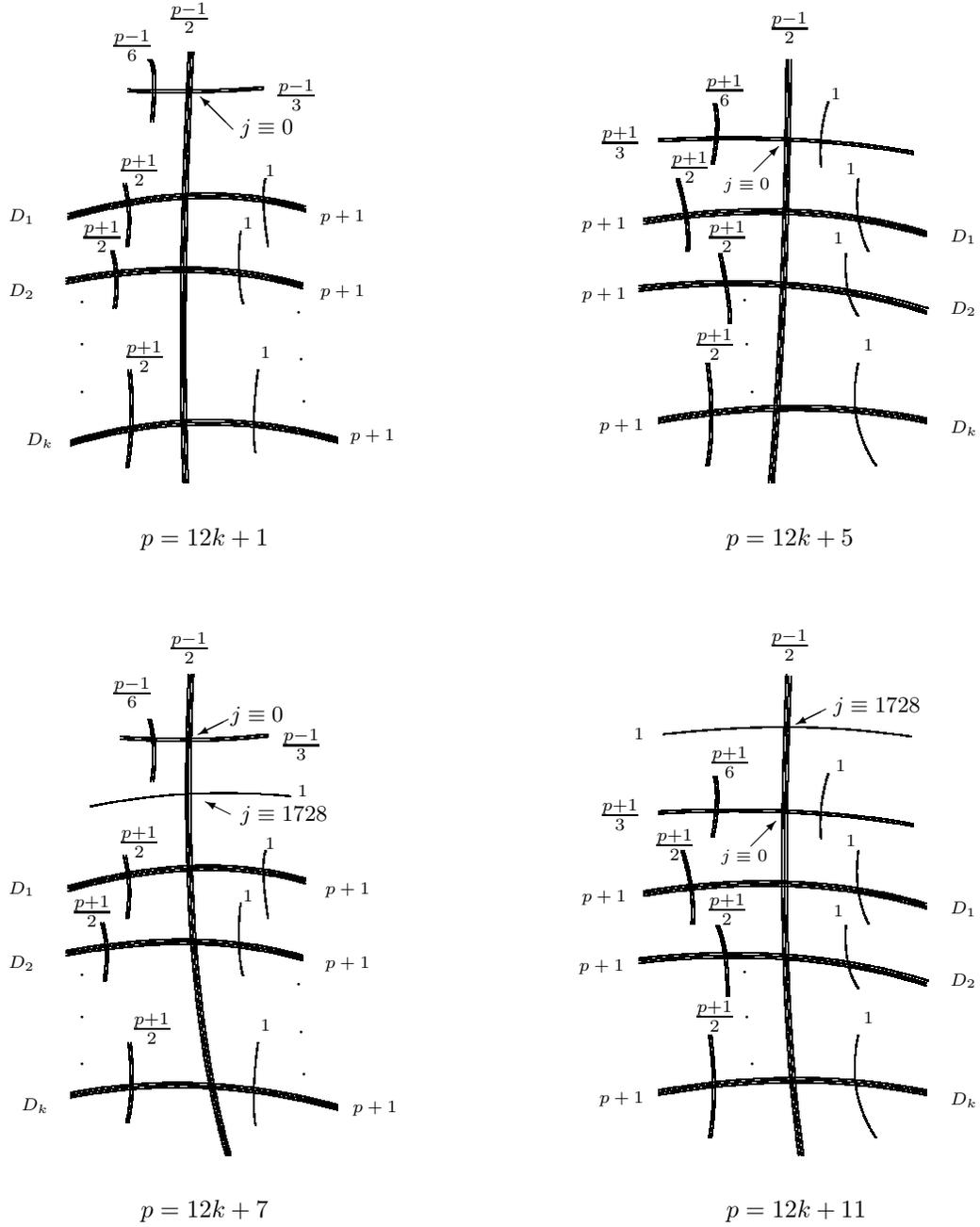

\begin{proof} 

As for Theorem~\ref{thNSgrossier} above, we start from the semistable model $X_{\mathrm{ns}}^+ (p)^{\mathrm{st}}$ over 
$\Z_p^{\mathrm{ur}} [p^{2/(p^2 -1)}]$ of Theorem~3.5 in~\cite{EdixP20}, and quotient out by the Galois 
group $G ={\mathrm{Gal}} (\Q_p^{\mathrm{ur}} (p^{2/(p^2 -1)})/\Q_p^{\mathrm{ur}} )\simeq \F_{p^2}^* /\{ \pm 1\}$, 
taking care of points with exceptional automorphisms. 

\medskip

    The case where $j\equiv 0$ is ordinary $\mod p$ ($\equiv 1\mod 3$) is not quite the same as what happens for 
$X_{\mathrm{ns}} (p)$ in~Theorem~\ref{thNSgrossier}, because of Galois action.
There is still a total of $(p-1)/6$ points with that $j$-invariant on the (either $1$- or $2$-components) Igusa locus of 
our semistable model, which is $\overline{M} ({\mathrm{Ig}} (p)/\{\pm 1\} )_{\overline{\F}_p} $ if $p\equiv -1 
\mod 4$ and $\overline{M} ({\mathrm{Ig}} (p)/C_4 )_{\overline{\F}_p}$ if $p\equiv 1 \mod 4$. As in the proof of 
Theorem~\ref{thNS+}, those Igusa parts can be descended over 
$\Z_ p^{\mathrm{ur}} [\pi_2^2 =p^{2/(p -1)}]$, and then the inertia of the Galois action at the points for which 
$j\equiv 0$ is the order $3$ subgroup of $\F_{p^2}^* /\mu_{2(p+1)} (\F_{p^2} )\simeq \F_p^* /\{ \pm 1\}$, 
which is spanned by $\zeta_3 := u^{(p-1)/3} \mod \mu_{2(p+1)} (\F_{p^2} )$ 
for $u$ our generator of $\F_{p^2}^*$. Then $\zeta_3 \colon  \pi_2^2 \mapsto u^{2(p^2 -1)/3} \pi_2^2$. 
On the other hand, Proposition~\ref{stabilisateur} implies that the Galois action of $\zeta_3$ on the Igusa component is 
$(E,P)\mapsto (E,\langle u^{(p^2 -1)/6} \rangle_p P)$ (see (\ref{lesordres6})),
which is the same as that of the exceptional automorphism $[u^{(p^2 -1)/6} ]$ (action of a $6^{\mathrm{th}}$ root of unity) 
on the curve $E_0$ with $j$-invariant $0$ on $\overline{\F}_p$. As in the proof of Theorem~\ref{thNSgrossier}, from (7) 
in~\cite{EdixP20}, end of Section~2.2.2, one sees that the action of $\zeta_3$ on the universal deformation space 
$\overline{\F}_p [[t]]$ at the elliptic curve $E_0$ with $j$-invariant $0$ is given by $t\mapsto (u^{(p^2 -1)/6} )^2 
t=u^{(p^2 -1)/3} t$. Summing up, if $\Z_p^{\mathrm{ur}} [\pi_2^2 ] [[t]]$ is the completed local ring at such a point 
over $\Z_p^{\mathrm{ur}} [\pi_2^2 ]$, then the inertia subgroup of Galois can be described as
\begin{eqnarray}
\label{lactionzeta3} 
\zeta_3 \colon \pi_2^2 \mapsto \zeta_3^2 \pi_2^2 ,\  t\mapsto \zeta_3 t.
\end{eqnarray} 
Notice the difference with the analogous situation~(\ref{lactionomega3}) of Theorem~\ref{thNS+} as now the Galois action on 
the tangent space at our exceptional points is {\em not} diagonal. In any case, that action is not by pseudo-reflections 
either, so the quotient is not regular, and we blow it up. Over $\Z_p^{\mathrm{ur}} [\pi_6 :=\pi_2^6 =p^{6/(p-1)}]$, the 
Igusa quotient has multiplicity $3$, and looking at invariants shows that, putting $X=t^3$ and $Y=\pi_2^2 t$, each of our 
singularity quotients with $j\equiv 0$ there has local ring with same completion as 
$$
\Z_p^{\mathrm{ur}} [\pi_6 ][X,Y]/(Y^ 3 -\pi_6 X).
$$
Blowing it up at ${\cal M}=(\pi_6 ,X,Y)$ produces three affine schemes with rings 
$$
\Z_p^{\mathrm{ur}} [\pi_6 ][X_1 ,Y_1 ]/(X_1^2 Y_1^3 -\pi_6 ), \hspace{0.5cm} \Z_p^{\mathrm{ur}} [\pi_6 ][X_2 ,Y_2 ]/
(X_2^2 Y_2 -\pi_6 )\hspace{0.2cm} {\mathrm{\ and\ }} \hspace{0.2cm} \Z_p^{\mathrm{ur}} [\pi_6 ][X_3 ]
$$
(with $X_1 :=X$, $Y_1 := Y/X$; $X_2 := \pi_6 /Y$, $Y_2 :=X/Y$; and $X_3 :=Y/\pi_6$).

In other words, our blowing up creates, at each point with $j\equiv 0\mod p$, a chain consisting of a projective line with 
multiplicity $2$, on which stems another projective line with multiplicity $1$. Now going all the way down to the quotient 
over $\Z_p^{\mathrm{ur}}$, 
all our $(p-1)/6$ points are permuted (regardless of whether $p$ is $1$ or $-1$ mod $4$), so that one eventually obtains a 
single point with $j\equiv 0\mod 3$ on the Igusa $j$-line, from which goes out a chain consisting of a $\PPP^1$ with 
multiplicity $(p-1)/3$, followed by another one with multiplicity $(p-1)/6$. The crossing points have the expected shape.  
 
\medskip

     When $j\equiv 1728$ is ordinary mod $p$ ($\equiv 1\mod 4$), then: nothing happens. (The Galois quotient of the 
Igusa component is regular at those special points.) Indeed, compare with the proof of Theorems~\ref{thNSgrossier}, around 
(\ref{voiciPlain}): over $\Z_p^{\mathrm{ur}} [\pi_2 :=p^{1/(p-1)} ]$, the local ring at some point 
with $j\equiv 1728$ on an Igusa component $\overline{M} ({\mathrm{Ig}} (p)/C_4 )_{\overline{\F}_p}$ is now
$\Z_p^{\mathrm{ur}} [\pi_2] [[x]]$, with Galois action of the quadratic subextension: $\pi_2 \mapsto -\pi_2$, $x\mapsto 
x$. Whence the regularity.

\medskip

   Now if $j\equiv 1728$ is supersingular mod $p$ ($\equiv -1\mod 4$), then as in the proof of 
Theorem~\ref{thNSgrossier}, one checks through Theorem~\ref{thNS+}
above and its proof, and Theorem~3.5 of~\cite{EdixP20}, that a parameter on the Drinfeld component (which is a 
projective line) in the special fiber of the semistable model $X_{\mathrm{ns}}^+ (p)^{\mathrm{st}}$ 
over $\Z_p^{\mathrm{ur}} [p^{2/(p^2 -1)}]$ is 
${\cal X} =V^2 =({\tilde{\alpha}} \tilde{\beta} )^2$ (with notation as in~(\ref{DrinfeldXns}), or as in proof of 
Theorem~3.5 of~\cite{EdixP20}). Therefore, if $u$ is a generator of  $\F_{p^2}^*$, 
then on our exceptional Drinfeld component, the Galois action is defined by $u\colon {\cal X}\mapsto u^{2(p-1)} 
{\cal X}$ (cf. (\ref{actionDeGalois})). Therefore, the Galois quotient, over $\Z_p^{\mathrm{ur}}$, 
has a special Drinfeld component which is a projective line with multiplicity $(p+1)/2$. On the Drinfeld  component 
above (over $\Z_p^{\mathrm{ur}} [p^{2/(p^2 -1)}]$) (which again is a projective line with multiplicity $1$), that 
Galois action has exactly two fixed points(at ${\cal X}=0$ and ${\cal X}=\infty$). One has to be the 
intersection point with the Igusa component. The other one creates a non regular point on 
the quotient Drinfeld curve over $\Z_p^{\mathrm{ur}}$. An avatar on the computations seen many 
times now (cf. e.g.~(\ref{laclassiquederegularisation}) and below)
shows that blowing up at that point causes a projective line with multiplicity $1$ to appear. Summing-up: at the 
supersingular point $j\equiv 1728$, some projective line, say $D_{-1}$, with multiplicity $(p+1)/2$, intersects our Igusa 
(vertical) component, and some other projective line (say ${\cal E}_{-1}$) with multiplicity $1$ pops-up at some other point 
of $D_{-1}$. The resulting scheme is now regular with normal crossings, but not {\em minimal} with such properties: indeed, 
one readily checks that $D_{-1}$ has self-intersection $-1$. Contracting it, only ${\cal E}_{-1}$ survives, and we now have 
the desired minimality property.

\medskip

   Finally, let us consider the case when $j\equiv 0$ is supersingular mod $p$ ($\equiv -1\mod 3$). With notation as in the 
proof of Theorem~3.5 of~\cite{EdixP20}, the exceptional Drinfeld component above $j\equiv 0$, for the curve over 
$\Z_p^{\mathrm{ur}} [p^{2/(p^2 -1)}]$, has equation 
$$
{\cal Y}^2 ={\cal X} ({\cal X}^{\frac{p+1}{6}} +A_{\mathrm{ns}} )
$$
with ${\cal X}=X^3 =V^6$ and ${\cal Y}= XY=UV^3$.
With the same $u$ as just before that means that Galois acts by $u\colon {\cal X}\mapsto u^{6(p-1)}{\cal X}, 
{\cal Y}\mapsto u^{3(p-1)}{\cal Y}$ (see~(\ref{DrinfeldXns}) and around). This first shows that our exceptional Drinfeld 
component is a projective line (with parameter ${\cal Z}:={\cal Y}^2 /{\cal X}$) with multiplicity $(p+1)/3$. Then the same 
reasoning as at the end of the proof of Theorem~\ref{thNS+} 
(see~(\ref{DrinfeldNS+1}) and nearby) shows that after blowing up, two extra projective lines, one with multiplicity 
$1$, and one with multiplicity $(p+1)/6$, arise on that last Drinfeld component with $j\equiv 0$.  
 
\medskip

    As at the end of proof of Theorem~\ref{thNSgrossier}, the assertion that the regular models we obtained are the {\em 
minimal} regular models with normal crossings (at least when $p>5$) follows from the fact that no component can be 
contracted without losing the ``normal crossings'' property. Again, details will be displayed in the proof of next 
Corollary~\ref{minimalXns^+}. $\Box$
  
\end{proof}

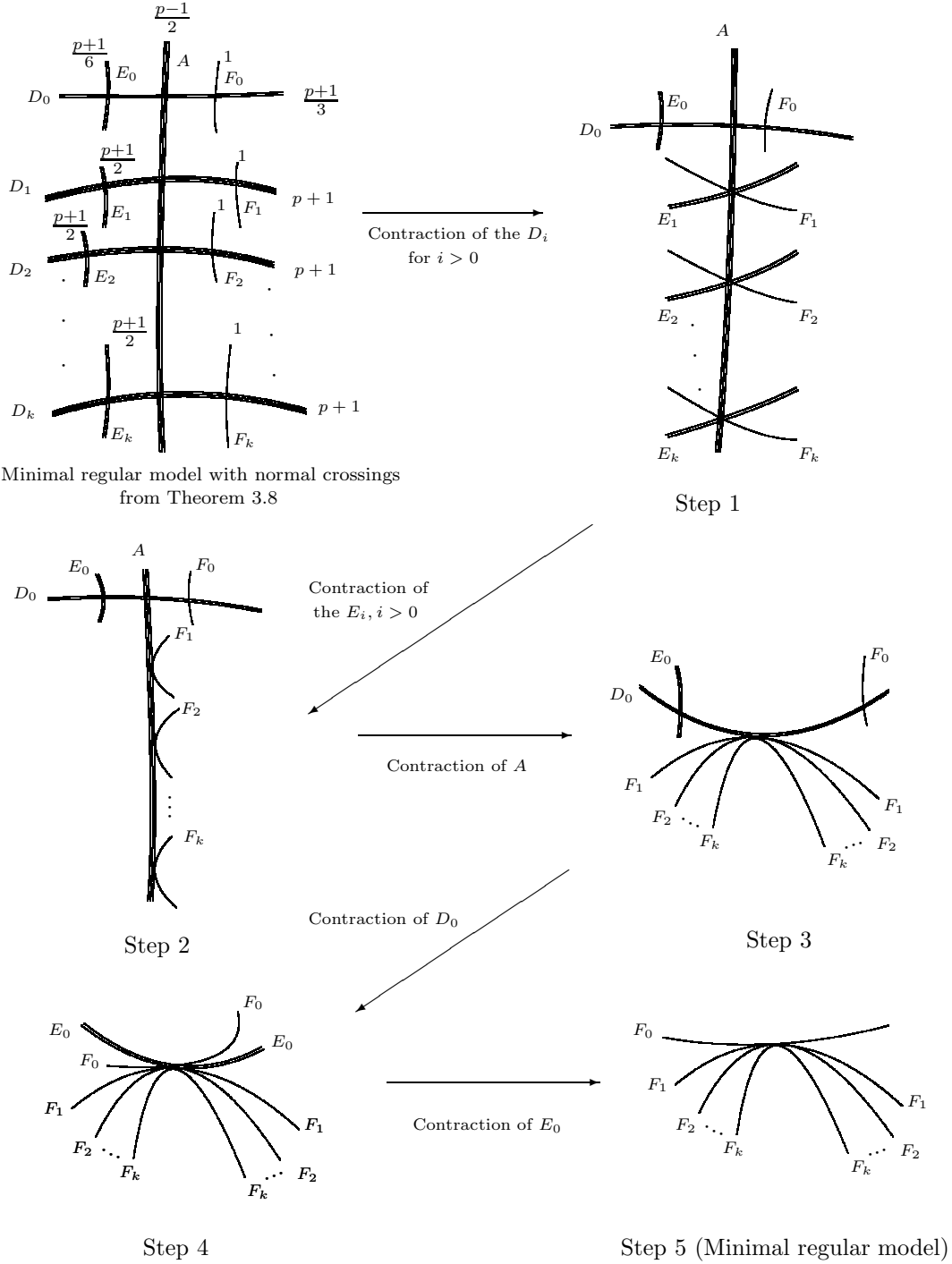
\begin{figure}
\begin{center}



\begin{picture}(320,510)(-30,-140)

\qbezier(-51,306)(5,321)(46,309)
\qbezier(-51,307)(5,322)(46,310)
\qbezier(-51,308)(5,323)(46,311)

\put(-67,310){\scriptsize$D_1$}
\put(53,305){\scriptsize$p+1$}

\qbezier(-27,320)(-24,308)(-26,295)
\qbezier(-28,320)(-25,308)(-27,295)
\put(-29,321){$\frac{p+1}{2}$}
\put(-24,299){\scriptsize$E_1$}

\qbezier(30,322)(28,316)(31,295)
\put(30,323){\scriptsize$1$}
\put(32,301){\scriptsize$F_1$}


\qbezier(-50,280)(5,290)(45,278)
\qbezier(-50,281)(5,291)(45,279)
\qbezier(-50,282)(5,292)(45,280)

\put(-67,275){\scriptsize$D_2$}
\put(54,275){\scriptsize$p+1$}

\qbezier(-35,293)(-32,283)(-34,270)
\qbezier(-36,293)(-33,283)(-35,270)
\put(-49,293){$\frac{p+1}{2}$}
\put(-31,272){\scriptsize$E_2$}

\qbezier(20,301)(18,283)(21,272)
\put(21,302){\scriptsize$1$}
\put(24,271){\scriptsize$F_2$}

\qbezier(-48,217)(5,233)(59,218)
\qbezier(-48,216)(5,232)(59,217)
\qbezier(-48,215)(5,231)(59,216)

\put(-66,215){\scriptsize$D_k$}
\put(64,217){\scriptsize$p+1$}

\qbezier(-26,245)(-24,224)(-27,206)
\qbezier(-25,245)(-23,224)(-26,206)
\put(-24,248){$\frac{p+1}{2}$}
\put(-24,205){\scriptsize$E_k$}

\qbezier(27,245)(24,222)(26,202)
\put(28,250){\scriptsize$1$}
\put(28,203){\scriptsize$F_k$}


\qbezier(0,373)(-5,260)(-2,200) 
\qbezier(1,373)(-4,260)(-1,200)
\qbezier(-1,373)(-6,260)(-3,200)

\put(-6,381){$\frac{p-1}{2}$}
\put(4,363){\scriptsize$A$}

\qbezier(-45,351)(5,350)(49,352)
\qbezier(-45,350)(5,349)(49,351)
\put(57,346){$\frac{p+1}{3}$}
\put(-59,348){\scriptsize$D_0$}

\qbezier(-25,365)(-23,354)(-26,336)
\qbezier(-26,365)(-24,354)(-27,336)
\put(-41,366){$\frac{p+1}{6}$}
\put(-22,358){\scriptsize$E_0$}

\qbezier(22,365)(19,342)(21,335)
\put(24,365){\scriptsize$1$}
\put(24,356){\scriptsize$F_0$}


\put(-45,+270){$\cdot$}
\put(-45,253){$\cdot$}
\put(-45,234){$\cdot$}

\put(42,266){$\cdot$}
\put(43,247){$\cdot$}
\put(44,230){$\cdot$}


\put(-70,188){\footnotesize{Minimal regular model with normal crossings}} 
\put(-20,178){\footnotesize{from Theorem~\ref{thNS+grossier}}}


 
\put(83,301){\vector(1,0){76}}

\put(85,290){\scriptsize{Contraction of the $D_i$}}
\put(102,280){\scriptsize{for $i>0$}}



\qbezier(188,338)(245,341)(290,333)
\qbezier(188,337)(245,340)(290,332)
\put(174,334){\scriptsize$D_0$}

\qbezier(208,352)(210,343)(207,328)
\qbezier(209,352)(211,343)(208,328)
\put(211,346){\scriptsize$E_0$}

\qbezier(256,353)(252,342)(253,327)
\put(258,345){\scriptsize$F_0$}

\qbezier(212,304)(246,310)(266,322)
\qbezier(213,303)(247,309)(267,321)
\put(267,296){\scriptsize$F_1$}

\qbezier(211,324)(246,304)(266,302)
\put(207,296){\scriptsize$E_1$}

\qbezier(212,285)(246,266)(266,263)
\put(267,255){\scriptsize$F_2$}

\qbezier(211,265)(246,272)(266,285)
\qbezier(212,264)(247,271)(267,284)
\put(207,255){\scriptsize$E_2$}

\put(221,251){$\cdot$}
\put(222,238){$\cdot$}
\put(224,224){$\cdot$}


\qbezier(212,227)(246,205)(266,205)
\put(267,197){\scriptsize$F_k$}

\qbezier(211,207)(246,217)(266,227)
\qbezier(212,206)(247,216)(267,226)
\put(207,197){\scriptsize$E_k$}


\put(232,376){\scriptsize$A$}

\qbezier(241,370)(240,270)(234,200)
\qbezier(240,370)(239,270)(233,200)
\qbezier(239,370)(238,270)(232,200)

%


\put(215,175){Step 1}


 
\put(180,169){\vector(-3,-2){120}}

\put(60,140){\scriptsize{Contraction of}}
\put(62,130){\scriptsize{the $E_i , i>0$}}



\qbezier(-50,138)(-0,141)(40,133)
\qbezier(-50,137)(0,140)(40,132)
\put(-65,138){\scriptsize$D_0$}

\qbezier(-30,148)(-25,138)(-29,128)
\qbezier(-29,148)(-24,138)(-28,128)
\put(-42,149){\scriptsize$E_0$}

\qbezier(10,149)(8,138)(11,127)
\put(12,150){\scriptsize$F_0$}


\qbezier(1,122)(-14,107)(3,96)
\put(3,121){\scriptsize$F_1$}

\qbezier(5,91)(-13,78)(2,62)
\put(6,89){\scriptsize$F_2$}

\put(0,51){$\cdot$}
\put(0,47){$\cdot$}
\put(0,43){$\cdot$}


\qbezier(2,37)(-13,22)(4,7)
\put(7,35){\scriptsize$F_k$}

\put(-15,156){\scriptsize$A$}


\qbezier(-10,150)(-5,50)(-8,10)
\qbezier(-9,150)(-4,50)(-7,10)
\qbezier(-8,150)(-3,50)(-6,10)

%


\put(-18,-12){Step 2}

\put(81,80){\vector(1,0){90}}
\put(93,65){\scriptsize{Contraction of $A$}}


\qbezier(200,100)(250,60)(305,98)
\qbezier(200,101)(250,61)(305,99)

\put(187,95){\scriptsize$D_0$}

\qbezier(215,109)(218,99)(216,79)
\qbezier(216,109)(219,99)(217,79)
\put(204,111){\scriptsize$E_0$}

\qbezier(295,113)(293,95)(296,84)
\put(297,112){\scriptsize$F_0$}


\qbezier(205,62)(250,100)(301,53)
\put(193,57){\scriptsize$F_1$} 
\put(303,48){\scriptsize$F_1$}


\qbezier(215,50)(246,113)(297,40)
\put(205,43){\scriptsize$F_2$} 
\put(300,32){\scriptsize$F_2$} 


\put(217,42){$\cdot$}
\put(220,40){$\cdot$} 
\put(223,39){$\cdot$}  


\qbezier(231,41)(245,120)(278,33)
\put(225,33){\scriptsize$F_k$}
\put(279,25){\scriptsize$F_k$}


\put(286,30){$\cdot$}
\put(289,31){$\cdot$} 
\put(292,32){$\cdot$}  


\put(245,-10){Step 3}



 
\put(170,23){\vector(-3,-2){90}}

\put(060,-0){\scriptsize{Contraction of $D_0$}}
 


\qbezier(-35,-42)(3,-72)(40,-52)
\qbezier(-36,-43)(003,-73)(41,-53) 
\put(-50,-47){\scriptsize$E_0$}
\put(44,-52){\scriptsize$E_0$}


\qbezier(-25,-60)(36,-64)(30,-37)
\put(-37,-60){\scriptsize$F_0$}
\put(32,-35){\scriptsize$F_0$}


\qbezier(-40,-78)(5,-40)(56,-87)
\put(-52,-80){\scriptsize$F_1$} 
\put(58,-87){\scriptsize$F_1$}


\qbezier(-30,-90)(1,-27)(48,-100)
\put(-40,-97){\scriptsize$F_2$} 
\put(55,-108){\scriptsize$F_2$} 

\put(-28,-98){$\cdot$}
\put(-25,-100){$\cdot$} 
\put(-22,-101){$\cdot$}  


\qbezier(-14,-99)(-2,-20)(33,-107)
\put(-20,-107){\scriptsize$F_k$}
\put(34,-115){\scriptsize$F_k$}


\put(41,-110){$\cdot$}
\put(44,-109){$\cdot$} 
\put(47,-108){$\cdot$}  



\put(-10,-140){Step 4}


\put(93,-67){\vector(1,0){90}}
\put(104,-87){\scriptsize{Contraction of $E_0$}}


\qbezier(210,-48)(255,-56)(305,-43)

\put(197,-47){\scriptsize$F_0$}


\qbezier(215,-68)(260,-30)(311,-77)
\put(203,-70){\scriptsize$F_1$} 
\put(313,-77){\scriptsize$F_1$}

\qbezier(225,-80)(256,-17)(307,-90)
\put(215,-87){\scriptsize$F_2$} 
\put(310,-98){\scriptsize$F_2$} 


\put(227,-88){$\cdot$}
\put(230,-90){$\cdot$} 
\put(233,-91){$\cdot$}  


\qbezier(241,-89)(253,-10)(288,-97)
\put(235,-97){\scriptsize$F_k$}
\put(289,-105){\scriptsize$F_k$}


\put(296,-100){$\cdot$}
\put(299,-99){$\cdot$} 
\put(302,-98){$\cdot$}



\qbezier(-40,-78)(5,-40)(56,-87)
\put(-52,-80){\scriptsize$F_1$} 
\put(58,-87){\scriptsize$F_1$}


\qbezier(-30,-90)(1,-27)(48,-100)
\put(-40,-97){\scriptsize$F_2$} 
\put(55,-108){\scriptsize$F_2$} 

\put(-28,-98){$\cdot$}
\put(-25,-100){$\cdot$} 
\put(-22,-101){$\cdot$}  


\qbezier(-14,-99)(-2,-20)(33,-107)
\put(-20,-107){\scriptsize$F_k$}
\put(34,-115){\scriptsize$F_k$}


\put(41,-110){$\cdot$}
\put(44,-109){$\cdot$} 
\put(47,-108){$\cdot$}  


\put(192,-140){Step 5 (Minimal regular model)}

 
\end{picture}
\end{center}

\caption{Minimal resolution of $X_{\mathrm{ns}}^+ (p)$ in the case $p=12k+5$} 
\label{figureNS+minimal5}
\end{figure}

\begin{coro}
\label{minimalXns^+}
Let $p\geq 5$. If $p=12k+i$, for $i=1, 5, 7$ or $11$, set $n(p) =k$ if $i=1$, $n (p) =k+1$ if $i=5$ or $i=7$, 
and $n(p) =k+2$ when $i=11$.

    The {\em minimal regular model} of $X_{\mathrm{ns}}^+ (p)$ over $\Z_p^{\mathrm{ur}}$, has a reduced fiber,  
which is made up of $n(p)$ components (which is precisely the number of supersingular invariants), intersecting at one common 
singular point. (As in Corollary~\ref{minimalXns}, components are quotients of the projective line, which might be singular 
at the common intersection point.) See Figure~\ref{figureNS+minimal5}.

\end{coro}

\begin{rema}
\label{reduitpasreduit}
{\rm  As already noticed, $X_{\mathrm{ns}}^+ (p)\simeq \PPP^1_\Q$ for $p=2, 3, 5$ and $7$; moreover, 
$X_{\mathrm{ns}}^+ (11)$ is known to be an elliptic curve of Kodaira type III from the work of Ligozat (\cite{Lg77}), 
which can also be found by our computations.

  Note also that, whereas the Galois quotient of the semistable model for $X_{\mathrm{ns}}^+ (p)$ we started with has no 
reduced component (at all), the minimal regular model is, to the contrary, totally reduced... Of course we knew a priori that 
there had to be reduced components in order to host the specialization of points with values in $\Q$, which are known to 
exists in many cases (CM points). (Actually, for all $p$, CM elliptic curves even furnish an infinite number of CM points 
with values in $\Q_p^{\mathrm{ur}}$ which specialize to any of those reduced components.)}
\end{rema} 

\begin{proof}

This is very similar to the proof of Corollary~\ref{minimalXns}. Again we give a picture of the minimal resolution in the 
case $p=12k+5$ (see 
Figure~\ref{figureNS+minimal5}): using Castelnuovo's criterion, one first contracts the horizontal components with 
multiplicity $(p+1)$, then those with multiplicity $(p+1)/2$, then the vertical Igusa component, the  
horizontal one with multiplicity $(p+1)/3$, and finally that with multiplicity $(p+1)/6$. The other cases for $p$ mod $12$ 
are settled in the same fashion.

    As in the proof of 
Corollary~\ref{minimalXns}, one can remark that writing down the intersection matrix of our minimal regular model with normal 
crossings, and performing a Smith normalization process which parallels our successive contractions, one obtains the 
intersection matrix of our minimal regular model, which for~$X_{\mathrm{ns}}^+ (p)$ say with $p=12k+5$, has shape in the
components basis $\{ F_0 , F_1 , \dots , F_k \}$ of Figure~\ref{figureNS+minimal5}:
\begin{equation*}
\left(
\begin{array}{ccccc}
(5-p)/3 & 4  & 4 &  \cdots  & 4 \\ 
4 & 13-p  & 12 & \cdots  & 12 \\
\vdots & \vdots &  \vdots & \vdots & \vdots \\
4 & 12 &  \cdots & 13-p  & 12 \\
4 & 12 &  \cdots & 12 & 13-p 
\end{array}
\right)
\end{equation*}

$\Box$

\end{proof}

\subsubsection{Component groups for ${J}_{\mathrm{ns}}^+ (p)$}

\begin{propo}
\label{componentsXns+}
Let $p\ge 17$ be a prime, and denote by ${\mathcal J}_{\mathrm{ns}}^+ (p)$ the N\'eron model over $\Z_p^{\mathrm{ur}}$ of the 
Jacobian of $X_{\mathrm{ns}}^+ (p)$. The component group ${\mathcal J}_{\mathrm{ns}}^+ (p)/{\mathcal J}^{+,0}_{\mathrm{ns}} (p)$ 
of the special fiber of ${\mathcal J}_{\mathrm{ns}}^+ (p)$ is:
\begin{itemize}
\item $\Z /12\Z \times \left( \Z /(p -1)\Z \right)^{(p-25)/12}$ if $p=12k+1$;
\item $\Z /4\Z \times \left( \Z /(p -1)\Z \right)^{(p-17)/12}$ if $p=12k+5$;
\item $\Z /6\Z \times \left( \Z /(p -1)\Z \right)^{(p-19)/12}$ if $p=12k+7$;
\item $\Z /2\Z \times \left( \Z /(p -1)\Z \right)^{(p-11)/12}$ if $p=12k+11$.
\end{itemize}
When $p=11$, the component group is $\Z /2\Z $, and it is trivial when $p= 2, 3, 5, 7$ or $13$.
\end{propo}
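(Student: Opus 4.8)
The plan is to proceed exactly as in the proof of Proposition~\ref{componentsXns}. By \cite{BLR}, Chapter~9.6, Corollary~3, any regular model of $X_{\mathrm{ns}}^+ (p)$ over $\Z_p^{\mathrm{ur}}$ determines the component group as the torsion subgroup of the cokernel of the intersection matrix of its special fiber, equivalently — since the multiplicities have gcd one (every exceptional chain carries a multiplicity-$1$ line $F_0$) — as the cokernel of that matrix after deleting any one row and column. I would take the minimal regular model with normal crossings of Corollary~\ref{minimalXns^+}, whose components are all projective lines, and organize the computation by the residue of $p$ modulo~$12$, treating $p=12k+5$ as the representative case and reading the incidences off Figure~\ref{figureNS+minimal5}.

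First I would record, for $p=12k+5$, the component basis $\{E_1,F_1,\dots,E_k,F_k,A,D_0,E_0,F_0\}$ together with the multiplicities: $6k+3$ and $1$ on the pendant pair $(E_i,F_i)$ attached to each of the $k$ generic supersingular points, $6k+2$ on the Igusa line $A$, and $4k+2,\,2k+1,\,1$ on the exceptional chain $D_0,E_0,F_0$ above $j\equiv 0$. Off the diagonal each crossing pair meets in a single point, so the entries are $0$ or $1$, except that the contraction of the multiplicity-$(p+1)$ lines $D_i$ has merged $A,E_i,F_i$ at a common point, giving $A\cdot E_i=A\cdot F_i=E_i\cdot F_i=1$. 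The diagonal self-intersections then come from $d_C\,C^2=-\sum_{C'\neq C}d_{C'}\,(C\cdot C')$, yielding $E_i^2=-1$, $F_i^2=-p$, $A^2=-(k+1)$, $D_0^2=E_0^2=-2$ and $F_0^2=-(4k+2)$. The key feature, absent from the two-multiplicity-$1$ situation of Proposition~\ref{componentsXns}, is that the higher pendant $E_i$ now has self-intersection $-1$.

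I would then bring the matrix to Smith normal form by successively pivoting on these $(-1)$ self-intersections, which is the exact algebraic counterpart of the geometric contractions in Steps~2--5 of Figure~\ref{figureNS+minimal5}: eliminating the $k$ lines $E_i$ turns $A^2$ into $-1$ and replaces each $F_i$ by $F_i^2=-(p-1)$ with $F_i\cdot A=2$, after which eliminating $A$, then $D_0$, then $E_0$ (each a $(-1)$-curve in turn) collapses everything onto the $k+1$ lines $F_0,F_1,\dots,F_k$. Deleting $F_0$ leaves the $k\times k$ matrix $-(p-1)\,I+12\,J$, with $J$ the all-ones matrix, whose Smith form is immediate: the entry gcd is $\gcd(12,p-1)=4$ and the remaining invariant factors equal $p-1$, so the component group is $\Z/4\Z\times(\Z/(p-1)\Z)^{k-1}$, matching the stated list and giving exponent $\mathrm{lcm}(4,p-1)=p-1$. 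The other three residue classes go the same way, the only change being the shape of the exceptional corner at $j\equiv 0$ and $j\equiv 1728$ (an extra two-step chain at an ordinary $j$ when $p\equiv 1,7$, and an extra multiplicity-$(p+1)/2$ Drinfeld line at a supersingular $j\equiv 1728$ when $p\equiv 7,11$), which replaces the $\Z/4\Z$ factor by $\Z/12\Z$, $\Z/6\Z$ or $\Z/2\Z$ and adjusts the number of $\Z/(p-1)\Z$ summands. The small primes $p=5,7,11,13$ lie outside the generic count, and I would simply compute their small explicit matrices by hand.

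The main obstacle is bookkeeping rather than conceptual: carrying the four $p$-dependent families of matrices, of growing size $2k+O(1)$, through the pivoting without losing track of the elementary divisors, and in particular pinning down how the exceptional chains at $j\equiv 0,1728$ alter both the leading cyclic factor and the exponent of $\Z/(p-1)\Z$ (namely $(p-25)/12$, $(p-17)/12$, $(p-19)/12$ or $(p-11)/12$ according as $p\equiv 1,5,7,11\bmod 12$). As in Proposition~\ref{componentsXns}, I would guard the hand computation by checking the outputs against \texttt{matsnf} for several small $p$ in each class.
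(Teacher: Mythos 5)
Your proposal is correct and follows essentially the same route as the paper, which proves this proposition by declaring the arguments identical to those of Proposition~\ref{componentsXns}: apply \cite{BLR}, Chapter~9.6, Corollary~3 to a regular model (the minimal one with normal crossings from Corollary~\ref{minimalXns^+}), write down the intersection matrix, and extract the elementary divisors via Smith normal form, with the small primes handled separately. Your explicit data for $p=12k+5$ — the multiplicities, the self-intersections $E_i^2=-1$, $F_i^2=-p$, $A^2=-(k+1)$, $D_0^2=E_0^2=-2$, $F_0^2=-(4k+2)$, the unit-pivot reductions mirroring the contraction steps, and the final matrix $-(p-1)I+12J$ with Smith form $\mathrm{diag}\bigl(4,p-1,\dots,p-1\bigr)$ — all check out and reproduce the stated group $\Z/4\Z\times\left(\Z/(p-1)\Z\right)^{(p-17)/12}$.
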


\begin{proof}
The arguments are the same as those of Proposition~\ref{componentsXns}. Note that for small primes, the zero group of 
components also trivially follows from the fact that the genus of $X_{\mathrm{ns}}^+ (p)$ is $0$ for $p=5, 7$ (and $p=2,3$
of course). It is however $3$ for $p=13$ (and $1$ if $p=11$); see for instance p.~117 of \cite{Ma76}. $\Box$

\end{proof}

\section{Regular models for split Cartan structures}
\label{SplitCartan}

Computations of regular models for split Cartan modular curves in prime level have already been performed, under 
the guise of $\Gamma_0 (p^2 )$-structure, by the first-named author, in~\cite{Edix89}. Here we redo the proofs for 
completeness, and add the normalizer-of-Cartan case, via the different method 
used in the previous non-split Cartan situation (starting from semistable models and computing Galois quotients).  
The next results therefore necessarily closely parallel those of previous section, to which we refer for details which might 
be skipped here.

\subsection{Regular model for $\overline{{\cal M}} ({\cal P}, \Gamma_{\mathrm{s}} (p))$}
\label{S}
  
\begin{theo} 
\label{thS} 
Let $p>3$ be a prime, let $[\Gamma_{\mathrm{s}} (p)]$ be the moduli problem over $\Z [1/p]$ associated with 
$\Gamma_{\mathrm{s}} (p)$, and let ${\cal P}$ be a representable moduli problem which is finite \'etale over 
$({\mathrm{Ell}})_{/\Z_p}$. Let $\overline{{\cal M}} ({\cal P}, \Gamma_{\mathrm{s}} (p)) =\overline{{\cal M}} 
({\cal P}, \Gamma (p))/ \Gamma_{\mathrm{s}} (p)$ be the associated compactified fine moduli space. 

\medskip

   Then $\overline{{\cal M}} ({\cal P}, \Gamma_{\mathrm{s}} (p))$ has a regular model over $\Z_p^{\mathrm{ur}}$ 
whose special fiber is made up of three ``vertical'' Igusa parts, which are linked together, at each supersingular 
point, by Drinfeld components.

\medskip 
 
  All three vertical parts are copies of $\overline{{\cal M}} ({\cal P} )_{\overline{\F}_p} $. One of them (call 
it the {\rm central one}) has multiplicity $p-1$; the other two (call them {\rm external ones}) have multiplicity 
$1$.  

\medskip

  If $s_{\cal P}$ is the number of supersingular points of $\overline{{\cal M}} ({\cal P})
(\overline{\F}_p )$, the $s_{\cal P}$ horizontal chains of (Drinfeld) components are all copies of a rational curve 
$\PPP^1$ with multiplicity $p+1$. 

   At any intersection point in the special fiber between two irreducible components, of multiplicity $a$ and $b$, say (both in 
$\{ 1, p-1, p+1 \}$), the completed local ring is 
$$
\Z_p^{\mathrm{ur}} [[X,Y]]/(X^{a} \cdot Y^{b} -p ).
$$ 
\end{theo}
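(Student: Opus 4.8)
The plan is to follow the proof of Theorem~\ref{thNS} step by step, the only serious difference being the bookkeeping of an extra vertical part and a pleasant simplification at the regularisation stage. First I would start from the semistable model of $\overline{{\cal M}}({\cal P},\Gamma_{\mathrm{s}}(p))$ over $\Z_p^{\mathrm{ur}}[\pi_0]$, $\pi_0:=p^{2/(p^2-1)}$, provided by \cite{EdixP20}, set $G:=\mathrm{Gal}(\Q_p^{\mathrm{ur}}(\pi_0)/\Q_p^{\mathrm{ur}})\simeq\F_{p^2}^*/\{\pm1\}$, and read off the dual graph of the Galois quotient from Proposition~\ref{stabilisateur}. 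The Drinfeld components are again $G$-stable. The new point is purely group-theoretic: the split Cartan (the diagonal torus) fixes its two eigenlines $L_1,L_2\in\PPP^1(\F_p)$ and permutes the remaining $p-1$ lines in a single orbit, so the Igusa copies $\mathrm{Ig}_{i,L}$ descend to \emph{three} vertical parts, the two ``external'' ones sitting over $L_1,L_2$ and one ``central'' one coming from the length-$(p-1)$ orbit; each is a copy of $\overline{{\cal M}}({\cal P})_{\overline{\F}_p}$.

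For the multiplicities I would use throughout the single principle that a component $C$ which is reduced over $\Z_p^{\mathrm{ur}}[\pi_0]$ descends with multiplicity $|G|/|I_C|=(p^2-1)/(2|I_C|)$, where $I_C\le G$ is its inertia (the subgroup fixing $C$ pointwise), the extension being tame since $p\nmid|G|$. For the central part the inertia is exactly the one met in Theorem~\ref{thNS}, namely the Galois elements trivialisable by a \emph{scalar} Cartan matrix, i.e.\ $\mu_{p+1}(\F_{p^2})/\{\pm1\}$, whence multiplicity $p-1$. The external parts are where the two cases diverge: on an eigenline \emph{every} Galois element can be absorbed, for given $u\in\F_{p^2}^*$ the pair $(u,\mathrm{diag}(u^{p+1},1))$ lies in $\F_{p^2}^*\times\Gamma_{\mathrm{s}}(p)$, fixes $\mathrm{Ig}_{i,L_1}$ (one checks the index $i$ is preserved using~(\ref{gammabar})) and acts, by Proposition~\ref{stabilisateur}, through the diamond $\langle u^{p+1}\chi_{L_1}(g^{-1})\rangle=\langle1\rangle$, hence trivially. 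Thus $I_C=G$ for the external parts and their multiplicity is $1$. Finally the Drinfeld parts carry inertia $\mu_{p-1}(\F_{p^2})/\{\pm1\}$, exactly as in~(\ref{actionDeGalois}), so they descend with multiplicity $p+1$, and the $\mu_{p+1}$-quotient turns them into projective lines as in~(\ref{modelecon}).

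The cleanest point, and the reason the split case needs \emph{no} blow-up at all, is the fixed-point analysis on the Drinfeld curve~(\ref{laDrinfeldOriginale}). After the split quotient its two coordinates $\alpha^{p-1},\beta^{p-1}$ are scaled by the \emph{same} factor $u^{-(p-1)}$ under Galois, a homogeneous scaling whose only fixed points lie at infinity; as in Remark~\ref{al'infini} these are precisely the three points meeting the Igusa parts (those with $\alpha=0$, with $\beta=0$, and the single orbit-point coming from the non-eigenlines, matching $L_1$, $L_2$ and the central part). In particular the finite fixed points (the ``$V=0$'' points) that forced the blow-ups in Theorems~\ref{thNS} and~\ref{thNS+} simply do not occur. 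It then remains only to compute the completed local rings at the three crossings directly, as was done for the Igusa--Drinfeld intersections in Theorem~\ref{thNS}: a multiplicity-$a$ component meeting a multiplicity-$b$ one gives $\Z_p^{\mathrm{ur}}[[X,Y]]/(X^a\,Y^b-p)$ (so $X^{p+1}Y^{p-1}-p$ at the central crossing and $X^{p+1}Y-p$ at the two external ones), which is regular because $a+b\ge2$ makes $(X,Y)$ the maximal ideal with two-dimensional cotangent space; regularity can also be read off Serre's pseudo-reflection criterion as in \cite{CEdSt03}.

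I expect the genuine work to be bookkeeping rather than geometry. The two delicate points are, first, to verify that the $p-1$ non-eigenline Igusa copies together with the $\zeta_p^i$-labelling really collapse to a \emph{single} central part of multiplicity $p-1$ (and not to several parts, nor with the external parts getting swapped by $G$, which happens only for the normalizer treated later), and second, to confirm that the homogeneity of the scaling is an honest consequence of the split structure rather than a coincidence of coordinates. Once the three parts and their inertia groups are correctly identified, the multiplicities, the absence of finite Galois-fixed points, and the normal-crossing equations all follow from the computations already carried out in the non-split case, which I would cite rather than repeat.
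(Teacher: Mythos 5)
Your route is the paper's route: start from the semistable model of Theorem~4.1 of \cite{EdixP20} over $\Z_p^{\mathrm{ur}}[\pi_0]$, quotient by $G\simeq \F_{p^2}^*/\{\pm 1\}$, observe that the two external Igusa parts are fixed pointwise (so descend with multiplicity $1$), treat the two central ones exactly as in Theorem~\ref{thNS} (they are swapped and give a single copy of $\overline{{\cal M}}({\cal P})_{\overline{\F}_p}$ of multiplicity $p-1$), and note that the Drinfeld components descend to projective lines of multiplicity $p+1$. Your key observation is also the paper's: every Galois-fixed point of a split Drinfeld component lies at infinity on the original model~(\ref{laDrinfeldOriginale}) (the ``$V=0$'' points have $\alpha\beta=0$, hence $z=0$), so all such points are Igusa--Drinfeld crossings and, unlike in Theorems~\ref{thNS} and~\ref{thNS+}, no blow-up is needed. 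Your tame-inertia formula (multiplicity $=|G|/|I_C|$) is a legitimate repackaging of the paper's stepwise descent and gives the correct multiplicities $1$, $p-1$, $p+1$.

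The gap sits exactly where you propose to ``cite rather than repeat'': the completed local rings at the crossings with the two external, multiplicity-$1$ Igusa parts. There is nothing in the non-split case to cite for these. In Theorem~\ref{thNS} the rings $\Z_p^{\mathrm{ur}}[[X,Y]]/(X^{p+1}Y-p)$ arise from the blow-up chart $A_{p+1}$ of a \emph{singular} quotient point, not from any Igusa--Drinfeld crossing; here they must instead come from a Galois-invariants computation whose essential input your sketch never mentions: by Theorem~4.1 of \cite{EdixP20}, those points of the semistable model are double points of thickness $(p-1)/2$, with ring $\Z_p^{\mathrm{ur}}[\pi_0][[x,y]]/(xy-\pi_0^{(p-1)/2})$, fixed by the full Galois group. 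The paper first descends to $\Z_p^{\mathrm{ur}}[\pi_1]$, $\pi_1=p^{1/(p+1)}$, where this becomes the \emph{regular} ring $\Z_p^{\mathrm{ur}}[\pi_1][[x,y]]/(xy-\pi_1)$, and then takes invariants under the residual cyclic group of order $p+1$ acting by $x\mapsto x$, $y\mapsto\lambda y$, $\pi_1\mapsto\lambda\pi_1$, getting $\Z_p^{\mathrm{ur}}[[x,Y]]/(x^{p+1}Y-p)$. Your regularity remark (that $a+b\ge 2$ makes $\Z_p^{\mathrm{ur}}[[X,Y]]/(X^aY^b-p)$ regular) is true but cannot replace this computation: it is circular, since one must first prove that the quotient's local ring has normal-crossing form, and multiplicity bookkeeping alone does not give that --- indeed in the non-split case the analogous Galois-quotient points were genuinely singular. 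The central crossing is in better shape, since the identical configuration (multiplicity $p-1$ meeting $p+1$) occurs in Theorem~\ref{thNS}; even there, though, the paper's split-case proof actually carries out the argument, via the d\'evissage of the index-$2$ stabilizer $S=S[\frac{p+1}{2}]\times S[\frac{p-1}{2}]$ into pseudo-reflections, rather than relying on the assertion made in Theorem~\ref{thNS}. So the skeleton of your proof is right, but these two local computations (especially the external one) must be done, not cited.
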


For a shape of the special fiber of the curve, see Figure~\ref{figureSgrossier} (which, precisely speaking, represents the 
coarse case $X_{\mathrm{s}}^+ (p)$).

\begin{proof}
As in the proof of Theorem~\ref{thNS}, we start by considering the semistable model over $\Z_p^{\mathrm{ur}} 
[\pi_0 =p^{2/(p^2 -1)} ]$ given in Theorem~4.1 of~\cite{EdixP20}. 

\medskip

   We begin with the vertical components. Remark that the two external Igusa parts in the special fiber, which are both 
copies of $\overline{{\cal M}} ({\cal P})_{\overline{\F}_p}$, are acted on trivially by Galois. They
therefore give rise to the same Igusa parts in the quotient modular curve over $\Z_p^{\mathrm{ur}}$. As for the 
other two Igusa parts, the situation is exactly the same as for those in the non-split Cartan case; therefore, the 
same arguments as in the beginning of the proof of Theorem~\ref{thNS} show they give rise to a copy of $\overline{{\cal M}} 
({\cal P} )_{\overline{\F}_p} $ with multiplicity $(p-1)$.

\medskip

  As for the Drinfeld parts, the situation is almost similar to the non-split Cartan case 
(Theorem~\ref{thNS}) but not quite. Recall that those Drinfeld components in the special fiber over 
$\Z_p^{\mathrm{ur}} [p^{2/(p^2 -1)}]$ have models 
\begin{eqnarray}
\label{DrinfeldXs}
U^2 =V^{p+1} +A_{\mathrm{s}},   
\end{eqnarray}
where $U=\alpha^{p} \beta -a/2$, $V=\alpha \beta$ and $A_{\mathrm{s}} =a^2 /4$ in $\overline{\F}_p$ some non-zero
element $a$ in $\overline{\F}_p$, with notation of~\cite{EdixP20}, Theorem~4.1, (27) and its proof. If $u$ is a 
generator of $\F_{p^2}^*$, its Galois action which can be read on the Drinfeld component as that of $(u,\left( {1 \atop 0} 
\ {0\atop u^{p+1}}\right) )\in \F_{p^2}^* \times \Gamma_{\mathrm{s}} (p)$, here induces $U\stackrel{\cdot  u}
{\longrightarrow} U$, $V\stackrel{\cdot u}{\longrightarrow} u^{p-1} V$ (cf. Proposition~\ref{stabilisateur}). Therefore, 
the quotient Drinfeld curves are projective lines, with parameters $X=U$, $Y=V^{p+1}$, linked by equations of the
shape $X^2 =Y +A_{\mathrm{s}}$. 
Looking at local rings at closed points of the arithmetic surface shows the new Drinfeld components appear with 
multiplicity $p+1$ in the special fiber above $\Z_p^{\mathrm{ur}}$. Also note that the quotient curve is regular, 
except perhaps under orbits of points where Galois does not act freely. The latter are, on the 
model~(\ref{DrinfeldXs}), the two points for which $V=0$ (but this implies $\alpha =0$ or $\beta =0$, so those 
points are quotients of points at infinity on the model for the full Drinfeld curves (\ref{laDrinfeldOriginale}) of 
$X(p)$ in Section~\ref{lesStructuraux}). Or perhaps, they are points at infinity for (\ref{DrinfeldXs}), so 
again they come from points at infinity for (\ref{laDrinfeldOriginale}). In all cases, those possibly non-regular 
points are precisely the four intersection points with Igusa parts.  
 
\medskip

   We therefore just check what happens at intersection points between Igusa and Drinfeld components. First we 
treat those at the intersection with the central Igusa parts. In our initial 
semistable model over $\Z_p^{\mathrm{ur}} [\pi_0 =p^{2/(p^2 -1)}]$, they are double points of thickness $1$ with 
complete local ring
\begin{eqnarray}
\label{thickness1}
\Z_p^{\mathrm{ur}} [\pi_0 ][[x,y ]]/(xy -\pi_0 ),
\end{eqnarray}
for $x$ and $y$ parameters corresponding to the Igusa and Drinfeld components, respectively (\cite{EdixP20}, 
Theorem~4.1). The stabilizer of each of those double point is the subgroup of 
index $2$ in the total Galois group, that is,
$$
S :={\mathrm{Gal}} (\Q_p^{\mathrm{ur}} (\pi_0 )/\Q_p^{\mathrm{ur}} (\sqrt{p})) \simeq \F_{p^2}^{*,2} /\{\pm 1\} .
$$
By the straightfowrard adaptation of the Lemma~2.3.4 from~\cite{CEdSt03} already used in our previous proofs, one can assume 
Galois acts on $x$ and $y$ via (Teichm\"uller lifts of) characters. Recall that $u$ denotes a generator of the full Galois
group $\F_{p^2}^* /\{\pm 1\}$, so that $u^2$ is a generator of $S$. The order of the action on $x$ and $y$ and 
the preservation of the equation in~(\ref{thickness1}) imply that Galois acts via 
$$
u^2\colon  x\mapsto u^{2(p+1)} x, \ y\mapsto u^{2(1-p)} y.
$$  
Making a d\'evissage if one wishes, by decomposing $S$ as $S[\frac{p+1}{2}] \times S[\frac{p-1}{2}]$, we see that 
on the two successive subquotients, Galois acts via pseudo-reflections on the cotangent space, which implies that the 
quotient ring is regular, from Serre's criterion (2.3.9 in~\cite{CEdSt03}) -- and indeed that ring is 
$$
\Z_p^{\mathrm{ur}} [[ x^{\frac{p-1}{2}} ,y^{\frac{p+1}{2}},\sqrt{p} ]] /(x^{\frac{p^2 -1}{4}} y^{\frac{p^2 -1}{4}} -\sqrt{p} ).
$$
Going down the last step to $\Z_p^{\mathrm{ur}}$ finally gives a singularity with equation
$$
\Z_p^{\mathrm{ur}} [[ X,Y ]] /(X^{p+1} Y^{p-1} -p) .
$$

Finally, let us check points of intersection with the external Igusa components. Any double point in the 
semistable model 
over $\Z_p^{\mathrm{ur}} [\pi_0 =p^{\frac{2}{p^2 -1}}]$ has completed local ring 
$$
\Z_p^{\mathrm{ur}} [\pi_0 ] [[x,y]]/(xy-\pi_0^{\frac{p-1}{2}} )
$$
and is fixed by the full Galois group (Theorem~4.1 of~\cite{EdixP20}). Taking the quotient by $(\F_{p^2}^* /\{ \pm 1\} 
)^{p-1}$ and setting $\pi_1 :=p^{1/(p+1)}$, we obtain singularities of the shape
$$
\tilde{A} :=\Z_p^{\mathrm{ur}} [\pi_1 ] [[ x,y ]] /(x y -\pi_1 )
$$
which now is a regular ring. If $\lambda =u^{p-1}$ is a generator of the $p+1$-order cyclic quotient $G_1 
:={\mathrm{Gal}} (\Q_p^{\mathrm{ur}} (\pi_1 )/\Q_p^{\mathrm{ur}}) \simeq \F_{p^2}^*
/\F_p^*$, then the Galois action is induced by $\lambda \colon x\mapsto x$, $y\mapsto \lambda y$ and $\pi_1 \mapsto 
\lambda \pi_1$, with
$$
\tilde{A}^{G_1} = \Z_p^{\mathrm{ur}} [[x,Y ]]/(x^{p+1} Y-p) . \hspace{3cm}   \Box
$$
\end{proof}

\subsection{$\overline{{\cal M}} ({\cal P}, \Gamma_{\mathrm{s}}^+ (p))$}

\begin{theo} 
\label{thS+} 
Let $p>3$ be a prime, let $[\Gamma_{\mathrm{s}}^+ (p)]$ be the moduli problem over $\Z [1/p]$ associated with 
$\Gamma_{\mathrm{s}}^+ (p)$, and let ${\cal P}$ be a representable moduli problem which is finite \'etale over 
$({\mathrm{Ell}})_{/\Z_p}$. Let $\overline{{\cal M}} ({\cal P}, \Gamma_{\mathrm{s}}^+ (p)) =\overline{{\cal M}} 
({\cal P}, \Gamma (p))/\Gamma_{\mathrm{s}}^+ (p)$ be the associated compactified fine moduli space. 

\medskip

   Then $\overline{{\cal M}} ({\cal P}, \Gamma_{\mathrm{s}}^+ (p))$ has a regular model over $\Z_p^{\mathrm{ur}}$ 
whose special fiber is made of two ``vertical'' Igusa parts, which are bound together, at each supersingular 
point, by Drinfeld components.

\medskip 
 
  The two vertical parts are copies of $\overline{{\cal M}} ({\cal P} )_{\overline{\F}_p} $. One of them has 
multiplicity $\frac{p-1}{2}$; the other one has multiplicity $1$.  

\medskip

  If $s_{\cal P}$ is the number of supersingular points of $\overline{{\cal M}} ({\cal P})(\overline{\F}_p )$, the 
$s_{\cal P}$ horizontal chains of (Drinfeld) components are all copies of a rational curve $\PPP^1$ with 
multiplicity $p+1$. Each of them also sees one projective line arising, with multiplicity $\frac{p+1}{2}$. 

\medskip

   At any intersection point in the special fiber between two irreducible components, of multiplicity say $a$ and $b (\in \{ 1, (p-1)/2, (p+1)/2, (p+1) \} )$, the completed local ring is 
$$
\Z_p^{\mathrm{ur}} [[X,Y]]/(X^{a} \cdot Y^{b} -p ).
$$ 
\end{theo}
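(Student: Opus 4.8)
The plan is to imitate the proof of Theorem~\ref{thS}, writing $\Gamma_{\mathrm{s}}^+ (p) = \Gamma_{\mathrm{s}} (p) \rtimes \langle w\rangle$ with $w = \left( {0\atop 1}\ {1\atop 0}\right)$, and to superimpose the supplementary quotient by $w$ on the split-Cartan picture, exactly as the passage from Theorem~\ref{thNS} to Theorem~\ref{thNS+} added the normalizer in the non-split case. I would start from the semistable model of $\overline{{\cal M}} ({\cal P}, \Gamma_{\mathrm{s}}^+ (p))$ over $\Z_p^{\mathrm{ur}} [\pi_0 = p^{2/(p^2 -1)}]$ given in~\cite{EdixP20}, and take the quotient by $G = {\mathrm{Gal}} (\Q_p^{\mathrm{ur}} (\pi_0 )/\Q_p^{\mathrm{ur}}) \simeq \F_{p^2}^* /\{\pm 1\}$, tracking throughout the extra involution coming from $w$.

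For the vertical parts I would argue as follows. The two external Igusa parts of Theorem~\ref{thS} are attached to the two $\Gamma_{\mathrm{s}} (p)$-fixed lines $L_1 = \langle (1,0)\rangle$ and $L_2 = \langle (0,1)\rangle$ of $\F_p^2$, which $w$ interchanges; the supplementary quotient therefore identifies them into a single external part, still a copy of $\overline{{\cal M}} ({\cal P})_{\overline{\F}_p}$ of multiplicity $1$. For the central part I would choose a representative line $P = \langle (1,1)\rangle$, which is fixed by $w$; then by Proposition~\ref{stabilisateur} the element $(u_0 , w)$ with $N(u_0 ) = u_0^{p+1} = \det (w) = -1$ (e.g. $u_0 = u^{(p-1)/2}$) stabilizes ${\mathrm{Ig}}_{i,P}$ and acts there by the diamond operator $\langle u_0^{p+1} \chi_P (w^{-1})\rangle_p = \langle -1\rangle_p$, hence trivially, exactly as in Theorem~\ref{thNS+}. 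This enlarges the kernel of the Galois action by a factor $2$ with respect to Theorem~\ref{thS}, so that the same diamond-and-descent bookkeeping produces a central Igusa part of multiplicity $(p-1)/2$ rather than $p-1$.

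The genuinely new input is the Drinfeld analysis. On the split-Cartan Drinfeld model $U^2 = V^{p+1} + A_{\mathrm{s}}$ of~(\ref{DrinfeldXs}), a short computation with Proposition~\ref{stabilisateur} gives that $(u_0 , w)$ acts by $(\alpha ,\beta )\mapsto u_0^{-1} (\beta ,\alpha )$, hence by $U\mapsto -U$ and $V\mapsto u_0^{-2} V$; consequently, on the quotient line of Theorem~\ref{thS} (coordinates $X = U$, $Y = V^{p+1}$, equation $X^2 = Y + A_{\mathrm{s}}$) the induced involution is simply $X\mapsto -X$, $Y\mapsto Y$. Its fixed points are $X = \infty$, which is already an intersection with the central Igusa part (Remark~\ref{al'infini}), and the single finite point $(X,Y) = (0, -A_{\mathrm{s}})$, which is new: contrary to Theorem~\ref{thS}, the Galois action is no longer free away from the Igusa crossings (the two points $V = 0$, where $U^2 = A_{\mathrm{s}}$, which carried the external crossings, are merely swapped, which accounts for the merging of the external parts). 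The main Drinfeld component keeps multiplicity $p+1$, since the involution moves the generic point of the line; and I would regularize the new point by the blow-up used at the end of the proof of Theorem~\ref{thNS+}: with $\pi_1 = p^{1/(p+1)}$ and a good lift $x$ of a local parameter, the order-$2$ inertia acts by $\pi_1 \mapsto -\pi_1$, $x\mapsto -x$, the invariant ring is that of $\Z_p^{\mathrm{ur}} [t,X,Y]/(t^{(p+1)/2} - p, X^2 - tY)$ with $t = \pi_1^2$, and a single blow-up at its maximal ideal is regular and makes an exceptional $\PPP^1$ of multiplicity $(p+1)/2$ appear, meeting the Drinfeld line in a point with completed local ring $\Z_p^{\mathrm{ur}} [[X,Y]]/(X^{p+1} Y^{(p+1)/2} - p)$.

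It then remains to record the intersection rings, which follow the dévissage by pseudo-reflections of Theorem~\ref{thS} (Serre's criterion, \cite{CEdSt03}, 2.3.9): the thickness-one double points where the Drinfeld line (multiplicity $p+1$) meets the central Igusa part, respectively the external one, descend to $\Z_p^{\mathrm{ur}} [[X,Y]]/(X^{p+1} Y^{(p-1)/2} - p)$ and $\Z_p^{\mathrm{ur}} [[X,Y]]/(X^{p+1} Y - p)$, so that every crossing has the uniform shape $\Z_p^{\mathrm{ur}} [[X,Y]]/(X^a Y^b - p)$ asserted. The main obstacle, as in Theorem~\ref{thNS+}, is this Drinfeld computation: one must check that the sign $u_0^{p+1} = -1$ is genuinely needed for $(u_0 , w)$ to preserve the Drinfeld relation $\alpha^p \beta - \alpha\beta^p = a$ (the naive swap $\alpha \leftrightarrow \beta$ sends it to its negative), and then verify that here a \emph{single} finite fixed point survives, producing exactly one exceptional line of multiplicity $(p+1)/2$ — unlike Theorem~\ref{thNS+}, where two finite fixed points gave both a multiplicity-$1$ and a multiplicity-$(p+1)/2$ component.
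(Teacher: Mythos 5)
Your proposal is correct, but it follows a genuinely different decomposition of the quotient than the paper's proof. The paper does not superimpose $w$ on Theorem~\ref{thS}: it starts from the semistable model of $\overline{{\cal M}} ({\cal P}, \Gamma_{\mathrm{s}}^+ (p))$ itself (Theorem~4.2 of~\cite{EdixP20}, where the normalizer quotient has already been taken), whose Drinfeld components are the curves $Y^2 =X(X^{\frac{p+1}{2}} +A_{\mathrm{s}})$ of~(\ref{DrinfeldXs+}), and performs one Galois descent to $\Z_p^{\mathrm{ur}}$ exactly as in Theorem~\ref{thNS+}; the key step there is recognizing the quotient of $(X,Y)=(0,0)$ as the ``missing'' Igusa crossing (a point at infinity of~(\ref{laDrinfeldOriginale})), so that only the orbit of the $(p+1)/2$ points with $Y=0$ needs a blow-up, giving the single multiplicity-$\frac{p+1}{2}$ line. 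Your route (Galois first, then $w$) computes the same quotient, and legitimately so: $\Gamma_{\mathrm{s}} (p)$ is normal of index $2$ in $\Gamma_{\mathrm{s}}^+ (p)$, and the model of Theorem~\ref{thS} is the plain Galois quotient (no blow-ups were needed there), so quotienting it by the residual involution is the same thing. Your fixed-point analysis matches the paper's picture exactly: your point $(0,-A_{\mathrm{s}})$ is the image of the paper's inertia orbit, and your swapped pair $(\pm a/2,0)$ is the paper's missing Igusa crossing. What your route buys is uniformity (no case distinction mod $4$ for the Igusa parts, no hyperelliptic points-at-infinity bookkeeping) and a very transparent involution $X\mapsto -X$ on a line; what it costs is precisely the delicate point you flag: on the model over $\Z_p^{\mathrm{ur}}$ the involution looks $\Z_p^{\mathrm{ur}}$-linear, and one must lift it to an inertia representative (whose Galois part satisfies $u^{p-1}=-1$, hence negates $p^{1/(p+1)}$) to see that it acts by $-1$ on the transverse coordinate cutting the Drinfeld line as well — this is what excludes a pseudo-reflection and forces the blow-up; your appeal to Lemma~2.3.4 of~\cite{CEdSt03} and to the computation~(\ref{laclassiquederegularisation}) used in Theorem~\ref{thNS+} is exactly the right way to settle that sign.
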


For a picture of the curve's special fiber (actually in the coarse case), see Figure~\ref{figureS+grossier}.
 
\begin{proof}
Use Theorem~4.2 of~\cite{EdixP20}.  As in Theorem~\ref{thNS+} above, we see that the graph of the special fiber of 
the quotient scheme by Galois remains the same over $\Z_p^{\mathrm{ur}} [\sqrt{p}]$ as it is upstairs (on the semistable 
model over $\Z_p^{\mathrm{ur}} [p^{2/(p^2 -1)} ]$). Then on $\Z_p^{\mathrm{ur}}$, the two central Igusa components (... if 
there are two) are switched, whence the graph of the special fiber of our curve over $\Z_p^{\mathrm{ur}}$. Also remark 
that the copy of the Igusa component $\overline{\cal M}({\cal P} )$ in the special fiber of the semistable model 
over $\Z_p^{\mathrm{ur}} [p^{2/(p^2 -1)} ]$ descends to the same object over $\Z_p^{\mathrm{ur}}$.

   Then first assume $p\equiv -1$ mod $4$. As in the beginning of the proof of Theorem~\ref{thNS+} above, one sees that the 
Igusa component $\overline{\cal M}({\cal P} ,{\mathrm{Ig}} (p)/\{\pm 1\} )$ over $\Z_p^{\mathrm{ur}} [p^{2/(p^2 -1)} 
]$ descends to $\overline{\cal M}({\cal P} )$ over $\Z_p^{\mathrm{ur}}$, with multiplicity $(p-1)/2$. 
 If $p\equiv 1$ mod $4$, again the same arguments as in the proof of Theorem~\ref{thNS+} show the two Igusa 
components $\overline{\cal M}({\cal P} ,{\mathrm{Ig}} (p)/C_4 )$ over $\Z_p^{\mathrm{ur}} [p^{2/(p^2 -1)} ]$ 
descend to (a single) $\overline{\cal M} ({\cal P} )$ over $\Z_p^{\mathrm{ur}}$, always with multiplicity $(p-1)/2$.

\medskip

  As for the Drinfeld components over $\Z_p^{\mathrm{ur}} [p^{2/(p^2 -1)} ]$, Theorem~4.2 of~\cite{EdixP20} claims 
they have models 
\begin{eqnarray}
\label{DrinfeldXs+} 
Y^2 =X(X^{\frac{p+1}{2}} +A_{\mathrm{s}} )
\end{eqnarray}
(for some $A_{\mathrm{s}} =a^2 /4 \ne 0$) with $X=V^2 =(\alpha \beta )^2$ and $Y=UV=(\alpha^p \beta -a/2) (\alpha 
\beta )$ (notation as in  the proof of Theorem~4.2 of~\cite{EdixP20}, or~(\ref{DrinfeldXs}) above). As 
in~(\ref{DrinfeldXs}) and after we check that if $u$ is a generator of $\F_{p^2}^*$, Galois is induced by
$$
u\colon X\mapsto u^{2(p-1)} X, \ Y\mapsto u^{p- 1} Y .
$$  
So parameters for the Galois quotient are $R:=X^{(p+1)/2}$, $S:=Y^2 /X$, linked by an equation $S=R+A_{\mathrm{s}}$, 
and we are formally in the same situation as for the Drinfeld component~(\ref{DrinfeldNS+1}) in the proof of 
Theorem~\ref{thNS+} above. However, in the present split Cartan case,
the Drinfeld components are known to have one more intersection point with Igusa parts than in the non-split 
situation of~Theorem~\ref{thNS+}. More precisely,
if $p\equiv 1$ mod $4$, then (\ref{DrinfeldXs+}) defines a hyperelliptic equation of type ``$Y^2 =X^{2g+2} +
\cdots $'', which has two points at infinity, and they are 
intersection points with two Igusa parts on the semistable model over $\Z_p^{\mathrm{ur}} [p^{2/(p^2 -1)}]$. On 
the other hand, if $p\equiv -1$ mod $4$, then (\ref{DrinfeldXs+}) 
defines an equation of type ``$Y^2 =X^{2g+1} +\cdots $'' having one point at infinity, which is the intersection 
point of only one Igusa part with our Drinfeld component. So 
in both cases, there is one missing intersection point with Igusa parts. But one checks that $(X,Y)=(0,0)$ on 
(\ref{DrinfeldXs+}) indeed comes from a point at infinity on the Drinfeld component~(\ref{laDrinfeldOriginale}): so 
here is our missing intersection point with Igusa parts. Then there are $(p+1)/2$ remaining points 
(on~(\ref{DrinfeldXs+})) where Galois has some inertia, of order $2$, and the proof of Theorem~\ref{thNS+} above 
(cf.~(\ref{DrinfeldNS+1}) and the subsequent arguments) shows that taking the Galois quotient and blowing up still makes 
another projective line rise up with multiplicity $(p+1)/2$ in the Drinfeld projective line downstairs, over 
$\Z_p^{\mathrm{ur}}$. $\Box$

\end{proof}

\bigskip

\subsection{$X_{\mathrm{s}} (p)$}

\begin{theo} 
\label{thSgrossier} 
For $p\geq 13$ a prime, let $X_{\mathrm{s}} (p)$ be the coarse modular curve over $\Q$ associated with $\Gamma_{\mathrm{s}} 
(p)$.

   Then the {\em minimal regular model with normal crossings} of $X_{\mathrm{s}} (p)$ over $\Z_p^{\mathrm{ur}}$ has a
special fiber made up of three vertical Igusa parts, which are linked by horizontal chains of Drinfeld components above each 
supersingular point. All irreducible components are projective lines.

\medskip 
 
  The vertical parts are $j$-lines. The central one has multiplicity $p-1$. The two external ones have multiplicity 
$1$.   

\medskip

  If ${\cal S}$ is the number of supersingular $j$-invariants in $\F_{p^2}$, the ${\cal S}$ horizontal chains 
of (Drinfeld) components are almost all (i.e., for $j\not\equiv 0$ or $1728\mod p$) copies of a 
rational curve $\PPP^1$ with multiplicity $p+1$.  
%
According to the residue class of $p$ {\rm mod} $12$, the vertical Igusa components moreover have the following 
extra components: 
\begin{itemize}
\item {\bf If $p\equiv 1$ mod $12$}, the central Igusa component intersects transversally an extra component at 
each of the two (ordinary) $j$-invariants $1728$ and $0$, which are a projective line with multiplicity $(p-1)/2$ 
and a projective line with multiplicity $(p-1)/3$, respectively. 
\item  {\bf If $p\equiv 11$ mod $12$}, the Drinfeld components above the supersingular $j\equiv 1728$ and $j\equiv 
0\ {\mathrm{mod}}\ p$ are both projective lines, with multiplicity $(p+1)/2$ and $(p+1)/3$, respectively. 
\item  {\bf If $p\equiv 5$ mod $12$}, we have the relevant mix between the two above situations: $j\equiv 1728$ is 
ordinary ${\mathrm{mod}}\ p$, so see the case $p\equiv 1\ {\mathrm{mod}}\ 12$, whereas $j\equiv 0$ is supersingular 
${\mathrm{mod}}\ p$, so see the case $p\equiv 11\ {\mathrm{mod}}\ 12$.
\item  {\bf If $p\equiv 7\ {\mathrm{mod}}\ 12$}, symmetric to the previous case: now $j\equiv 1728$ is supersingular
${\mathrm{mod}}\ p$, whereas $j\equiv 0$ is ordinary ${\mathrm{mod}}\ p$.
\end{itemize}
\medskip

   Any intersection point in the special fiber between two irreducible components, of multiplicity say $a$ and $b$ (both in 
$\{ 1, (p-1)/3, (p+1)/3, (p-1)/2, (p+1)/2, (p-1), (p+1) \} )$, has completed local ring
$$
\Z_p^{\mathrm{ur}} [[X,Y]]/(X^{a} \cdot Y^{b} -p ).
$$ 
See Figure~\ref{figureSgrossier}.
%
%
\end{theo}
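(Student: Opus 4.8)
The plan is to run the argument of the coarse non-split case (Theorem~\ref{thNSgrossier}) in the split setting: start from the semistable model of $X_{\mathrm{s}}(p)$ over $\Z_p^{\mathrm{ur}}[p^{2/(p^2-1)}]$ constructed in Section~4 of~\cite{EdixP20}, and take the quotient by $G=\gal(\Q_p^{\mathrm{ur}}(p^{2/(p^2-1)})/\Q_p^{\mathrm{ur}})\simeq\F_{p^2}^*/\{\pm1\}$, isolating the points with extra automorphisms. For every supersingular $j$-invariant different from $0$ and $1728\bmod p$, and for the three vertical components, the situation is literally that of the fine moduli space of Theorem~\ref{thS}: the two external Igusa $j$-lines are fixed by Galois and descend with multiplicity $1$; the central one acquires multiplicity $p-1$ via the generically free action of $\F_{p^2}^{*,2}/\mu_{p+1}(\F_{p^2})$ followed by the final order-$2$ identification of two copies; and the generic Drinfeld curves become projective lines of multiplicity $p+1$, linking the Igusa parts, with completed local rings of the announced shape $\Z_p^{\mathrm{ur}}[[X,Y]]/(X^aY^b-p)$. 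So the only genuinely new input is at $j\equiv0,1728$.

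At $j\equiv1728$ the geometric automorphism group is $\Z/4\Z$ and the curve is ordinary exactly when $p\equiv1\bmod4$, while at $j\equiv0$ it is $\Z/6\Z$, ordinary exactly when $p\equiv1\bmod3$. In each case I would separate the ordinary from the supersingular behaviour exactly as in the proof of Theorem~\ref{thNSgrossier}. When the point is ordinary, the extra automorphism acts on the Igusa $j$-line through a diamond operator (cf.~(\ref{diamond}), (\ref{lesordres6}), (\ref{lactionomega3})), so the inertia of Galois there is cyclic of order $2$ (resp.\ $3$); descending and blowing up once, as in the computation leading to $A^{\langle\varepsilon\rangle}$ in Theorem~\ref{thNSgrossier}, produces a single extra vertical $\PPP^1$ on the \emph{central} component, of multiplicity $(p-1)/2$ at $1728$ and $(p-1)/3$ at $0$, meeting the $j$-line in a singularity $\Z_p^{\mathrm{ur}}[[a,b]]/(a^{p-1}b^{(p-1)/2}-p)$ (resp.\ with exponent $(p-1)/3$). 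When the point is supersingular, the automorphism instead enters the Galois action on the corresponding Drinfeld component, and by~(\ref{actionDeGalois}) its multiplicity drops to $(p+1)/2$ at $1728$ and $(p+1)/3$ at $0$.

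The one place where the split case genuinely diverges from the non-split one, and which I would stress, is that here \emph{no} extra multiplicity-$1$ lines sprout from the exceptional supersingular Drinfeld components. The reason is precisely the mechanism already isolated in the proof of Theorem~\ref{thS}: on the split Drinfeld model~(\ref{DrinfeldXs}) the points where Galois fails to act freely (the locus $V=0$ together with the points at infinity) all come from points at infinity of the genuine $X(p)$ Drinfeld model~(\ref{laDrinfeldOriginale}), hence coincide with the intersection points with Igusa parts. A d\'evissage of the stabilizer into its $\frac{p\pm1}{2}$-parts makes Galois act there by pseudo-reflections, so Serre's criterion (Theorem~2.3.9 of~\cite{CEdSt03}) already yields regularity with local rings $\Z_p^{\mathrm{ur}}[[X,Y]]/(X^aY^b-p)$ and no blow-up is needed. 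Consequently the reduced-multiplicity exceptional Drinfeld lines attach directly to the Igusa components, unlike in Theorem~\ref{thNSgrossier}.

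Finally I would assemble the four congruence classes by bookkeeping which of $j=0,1728$ is ordinary: $p\equiv1\bmod12$ gives both ordinary (two extra vertical lines on the central Igusa), $p\equiv11\bmod12$ gives both supersingular (reduced Drinfeld multiplicities $(p+1)/2$ and $(p+1)/3$), and $p\equiv5,7\bmod12$ give the two mixed cases recorded in the statement. I expect the main obstacle to be purely the accounting: correctly matching the inertia orders, the Galois action on the uniformizer $\pi=p^{2/(p^2-1)}$, and the exceptional-automorphism action on the Igusa (deformation) coordinate, so as to read off the exponents $a,b$ in each completed local ring — the same delicate tracking as in Theorems~\ref{thS} and~\ref{thNSgrossier}, now carried out over the three-Igusa-part geometry of the split case.
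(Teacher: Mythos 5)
Your proposal is correct and takes essentially the same route as the paper's own (very terse) proof, which likewise starts from Theorem~4.3 of~\cite{EdixP20}, handles the generic supersingular points and the three Igusa parts exactly as in Theorem~\ref{thS}, and treats the points with extra automorphisms by the computations of Theorem~\ref{thNSgrossier}. Your extra observation --- that in the split case no multiplicity-$1$ lines sprout from the exceptional Drinfeld components, because the points with Galois inertia all lie at intersections with Igusa parts where the d\'evissage/pseudo-reflection argument of Theorem~\ref{thS} already gives regularity --- is correct and usefully makes explicit a point the paper leaves implicit.
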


\begin{rema}
\label{petitsminimauxsplit}
{\rm{Again Theorem~\ref{thSgrossier} also applies to give regular models for all $p\geq 5$, but we have excluded primes 
$p\leq 11$ 
from its statement because of the non-minimality of our regular model with normal crossings for those primes. But 
$X_{\mathrm{s}} (p)$ is just a projective line if $p\leq 5$ (including $p=2, 3$); and for $p=7$ and $p=11$, the minimal 
regular model with normal crossings is obtained by contracting the central Igusa component (with multiplicity $p-1$). 
(See Remarks~\ref{petitsminimauxNsplit} and~\ref{petitsminimauxNsplit+}).}} 
\end{rema}

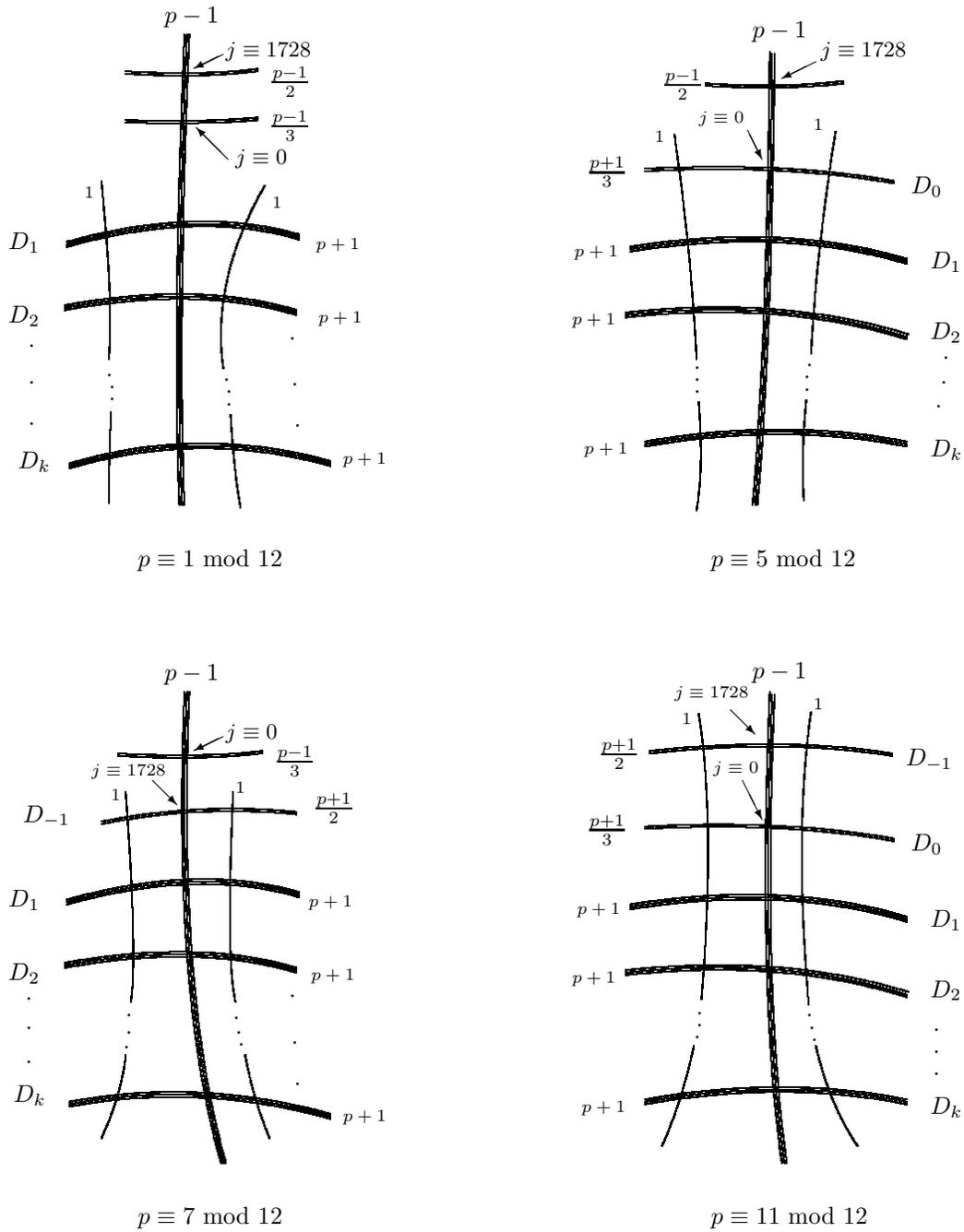
\begin{figure}

\begin{center}


\begin{picture}(320,410)(-30,-90)

\qbezier(-50,280)(5,290)(45,278)
\qbezier(-50,281)(5,291)(45,279)
\qbezier(-50,282)(5,292)(45,280)

\put(-73,275){$D_2$}
\put(54,275){\scriptsize$p+1$}

\qbezier(-35,333)(-30,283)(-32,260)
\put(-42,326){\scriptsize$1$}
\put(-33,251){$\cdot$}
\put(-32,246){$\cdot$}
\put(-32,241){$\cdot$}
\qbezier(-31,238)(-32,218)(-32,201)

\qbezier(32,331)(10,293)(15,256)
\put(35,322){\scriptsize$1$}
\put(15,249){$\cdot$}
\put(16,244){$\cdot$}
\put(16,239){$\cdot$}
\qbezier(18,237)(19,217)(22,199)

\qbezier(-49,306)(5,321)(46,309)
\qbezier(-49,307)(5,322)(46,310)
\qbezier(-49,308)(5,323)(46,311)

\put(-73,305){$D_1$}
\put(53,305){\scriptsize$p+1$}

\qbezier(-48,217)(5,233)(59,218)
\qbezier(-48,216)(5,232)(59,217)
\qbezier(-48,215)(5,231)(59,216)

\put(-69,215){$D_k$}
\put(64,217){\scriptsize$p+1$}


\qbezier(0,393)(-5,270)(-2,200) 
\qbezier(1,393)(-4,270)(-1,200)
\qbezier(-1,393)(-6,270)(-3,200)

\put(-9,398){$p-1$}


\put(16,384){\small $j\equiv 1728$}
\put(15,385){\vector(-2,-1){12}}
\qbezier(-25,378)(5,376)(29,379)
\qbezier(-25,377)(5,375)(29,378)
\put(34,371){$\frac{p-1}{2}$}


\put(20,340){\small $j\equiv 0$}
\put(17,342){\vector(-1,1){13}}
\qbezier(-25,358)(5,357)(29,359)
\qbezier(-25,357)(5,356)(29,358)
\put(34,353){$\frac{p-1}{3}$}


\put(-65,+263){$\cdot$}
\put(-65,248){$\cdot$}
\put(-65,231){$\cdot$}

\put(42,266){$\cdot$}
\put(43,247){$\cdot$}
\put(44,230){$\cdot$}


\put(-20,175){$p\equiv 1$ mod $12$}



 

\put(260,384){\small $j\equiv 1728$}
\put(258,382){\vector(-2,-1){14}}
\qbezier(213,373)(245,371)(269,374)
\qbezier(213,372)(245,370)(269,373)
\put(195,370){$\frac{p-1}{2}$}


\qbezier(188,338)(245,341)(290,333)
\qbezier(188,337)(245,340)(290,332)
\put(297,328){$D_0$}
\put(164,334){$\frac{p+1}{3}$}

\qbezier(200,352)(207,303)(209,260)
\put(192,349){\scriptsize$1$}
\put(208,254){$\cdot$}
\put(208,249){$\cdot$}
\put(208,244){$\cdot$}
\qbezier(210,242)(212,213)(209,198)

\qbezier(266,353)(258,302)(256,262)
\put(257,354){\scriptsize$1$}
\put(254,255){$\cdot$}
\put(253,250){$\cdot$}
\put(253,245){$\cdot$}
\qbezier(254,243)(252,222)(253,202)

\put(210,357){\scriptsize{$j\equiv 0$}}
\put(230,354){\vector(1,-2){6}}


\qbezier(182,306)(245,316)(295,301)
\qbezier(182,305)(245,315)(295,300)
\qbezier(182,304)(245,314)(295,299)
\put(305,297){$D_1$}
\put(159,302){\scriptsize$p+1$}

\qbezier(180,279)(245,286)(296,270)
\qbezier(180,278)(245,285)(295,269)
\qbezier(180,277)(245,284)(295,268)
\put(305,268){$D_2$}
\put(158,274){\scriptsize$p+1$}

\qbezier(188,226)(245,236)(295,226)
\qbezier(188,225)(245,235)(295,225)
\qbezier(188,224)(245,234)(295,224)
\put(305,220){$D_k$}
\put(163,221){\scriptsize$p+1$}

\put(310,258){$\cdot$}
\put(309,248){$\cdot$}
\put(307,238){$\cdot$}


\put(232,392){$p-1$}

\qbezier(241,385)(240,270)(234,200)
\qbezier(240,385)(239,270)(233,200)
\qbezier(239,386)(238,270)(232,200)

%


\put(215,175){$p\equiv 5$ mod $12$}






\qbezier(-50,10)(5,20)(45,8)
\qbezier(-50,11)(5,21)(45,9)
\qbezier(-50,12)(5,22)(45,10)

\put(-73,5){$D_2$}
\put(51,5){\scriptsize$p+1$}

\qbezier(-49,36)(5,51)(46,39)
\qbezier(-49,37)(5,52)(46,40)
\qbezier(-49,38)(5,53)(46,41)

\put(-73,35){$D_1$}
\put(50,35){\scriptsize$p+1$}

\qbezier(-48,-47)(5,-37)(59,-52)
\qbezier(-48,-46)(5,-36)(59,-51)
\qbezier(-48,-45)(5,-35)(59,-50)

\put(-71,-45){$D_k$}
\put(64,-53){\scriptsize$p+1$}

\qbezier(-25,-26)(-27,-43)(-35,-60)
\qbezier(24,-26)(27,-43)(34,-60)

\put(-25,-12){$\cdot$}
\put(-25,-19){$\cdot$}
\put(-26,-25){$\cdot$}

\put(19,-13){$\cdot$}
\put(20,-19){$\cdot$}
\put(21,-24){$\cdot$}


\qbezier(0,123)(-5,0)(15,-70) 
\qbezier(1,123)(-4,0)(16,-70)
\qbezier(-1,123)(-6,0)(14,-70)

\put(-9,128){$p-1$}


\put(16,104){\small $j\equiv 0$}
\put(15,105){\vector(-2,-1){12}}
\qbezier(-28,98)(5,96)(31,99)
\qbezier(-28,97)(5,95)(31,98)
\put(36,92){$\frac{p-1}{3}$}


\qbezier(-35,70)(5,78)(45,74)
\qbezier(-35,69)(5,77)(45,73)
\put(52,73){$\frac{p+1}{2}$}

\put(-67,70){$D_{-1}$}

\qbezier(-25,82)(-20,12)(-23,-4)
\put(-31,79){\scriptsize$1$}

\qbezier(19,82)(16,12)(20,-4)
\put(20,82){\scriptsize$1$}

\put(-39,89){\scriptsize{$j\equiv 1728$}}
\put(-14,87){\vector(1,-1){10}}


\put(-66,-5){$\cdot$}
\put(-66,-17){$\cdot$}
\put(-66,-31){$\cdot$}

\put(42,-4){$\cdot$}
\put(43,-23){$\cdot$}
\put(44,-40){$\cdot$}


\put(-20,-95){$p\equiv 7$ mod $12$}




\qbezier(190,99)(245,105)(289,98)
\qbezier(190,98)(245,104)(289,97)
\put(295,93){$D_{-1}$}
\put(169,95){$\frac{p+1}{2}$}

\qbezier(210,114)(217,68)(211,-3)
\put(203,110){\scriptsize$1$}

\qbezier(256,115)(249,67)(255,-5)
\put(257,116){\scriptsize$1$}

\put(200,121){\scriptsize{$j\equiv 1728$}}
\put(223,116){\vector(1,-1){10}}

\put(216,90){\scriptsize{$j\equiv 0$}}
\put(228,87){\vector(1,-2){8}}


\qbezier(195,-63)(203,-45)(208,-22)
\put(209,-11){$\cdot$}
\put(209,-16){$\cdot$}
\put(208,-21){$\cdot$}

\qbezier(275,-63)(263,-45)(258,-22)
\put(254,-12){$\cdot$}
\put(255,-17){$\cdot$}
\put(256,-22){$\cdot$}


\qbezier(188,68)(245,71)(290,63)
\qbezier(188,67)(245,70)(290,62)
\put(297,58){$D_0$}
\put(164,64){$\frac{p+1}{3}$}


\qbezier(182,36)(245,46)(295,31)
\qbezier(182,35)(245,45)(295,30)
\qbezier(182,34)(245,44)(295,29)
\put(305,27){$D_1$}
\put(160,32){\scriptsize$p+1$}

\qbezier(180,9)(245,16)(296,0)
\qbezier(180,8)(245,15)(295,-1)
\qbezier(180,7)(245,14)(295,-2)
\put(305,-2){$D_2$}
\put(158,4){\scriptsize$p+1$}

\qbezier(188,-44)(245,-34)(295,-44)
\qbezier(188,-45)(245,-35)(295,-45)
\qbezier(188,-46)(245,-36)(295,-46)
\put(305,-50){$D_k$}
\put(162,-49){\scriptsize$p+1$}

\put(306,-18){$\cdot$}
\put(306,-27){$\cdot$}
\put(306,-36){$\cdot$}



\put(232,129){$p-1$}

\qbezier(240,122)(235,0)(245,-70)
\qbezier(241,122)(236,0)(246,-70)
\qbezier(239,123)(234,0)(244,-70)

%


\put(215,-95){$p\equiv 11$ mod $12$}


\end{picture}
\end{center}
\caption{Special fiber over $\overline{\F}_p$ of the minimal regular model with normal crossings of $X_{\mathrm{s}} (p)$}  
\label{figureSgrossier}
\end{figure}

\begin{proof}
We now start from Theorem~4.3 of~\cite{EdixP20}. The only things to adapt with respect to the fine, rigidified case 
(Theorem~\ref{thS} above) is the situation at special points with extra automorphisms. Then the computations are 
essentially the same as those carried out in the proof of Theorem~\ref{thNSgrossier} above. (Compare 
Figure~\ref{figureSgrossier} with Figure~\ref{figureNSgrossier}.) $\Box$

\end{proof}

\begin{coro}
\label{minimalXs}
 For any prime $p$, the {\em minimal regular model} of $X_{\mathrm{s}} (p)$ over $\Z_p^{\mathrm{ur}}$ has a reduced fiber,  
which is made of two components (the external Igusa ones), intersecting (many times) at one common highly singular point 
(unless the curve has genus $0$, that is, $p= 2, 3$ or $5$). 
\end{coro}

See Figure~\ref{figureSminimal5} for the case $p=12k+5$.

\begin{proof}
See Corollary~\ref{minimalXns} and its proof: by Castelnuovo's criterion, one just needs to contract the components which are 
isomorphic to projective lines and have self-intersection $-1$. Namely, in the case $p=12k+5$, for instance, we first 
contract the Drinfeld components with multiplicity $p+1$. Then the central Igusa component (with multiplicity $p-1$) has 
self-intersection $-1$, so we contract it. Next we can contract the multiplicity $(p-1)/2$ component; finally we contract 
the Drinfeld component with multiplicity $(p+1)/3$. What remains is the two external Igusa components, which have 
multiplicity $1$. $\Box$

\end{proof}

\begin{rema}
\label{dejavureduit}
{\rm{Note that, after contracting all components $D_i$ for $i>0$ in the minimal regular model with normal crossings of 
$X_{\mathrm{s}} (p)$ over $\Z_p^{\mathrm{ur}}$ (that is, at the first step of the minimal resolution process), one recovers, 
by a quite different path, the results of \cite{Edix89}, paragraph~1.5, because of the well-known $\Q$-isomorphism 
$X_0 (p^2 )\simeq X_{\mathrm{s}} (p)$.

   As a numerical sanity check, doing the math as above for $p=7$ for example shows that $X_{\mathrm{s}} (7) 
\simeq X_0 (49)$ and ${\mathrm{Jac}} (X_{\mathrm{ns}} (7))$ (which have same special fiber) are elliptic curves of Kodaira 
type III, a fact known e.g. from~\cite{BSW75}.}} 
\end{rema}

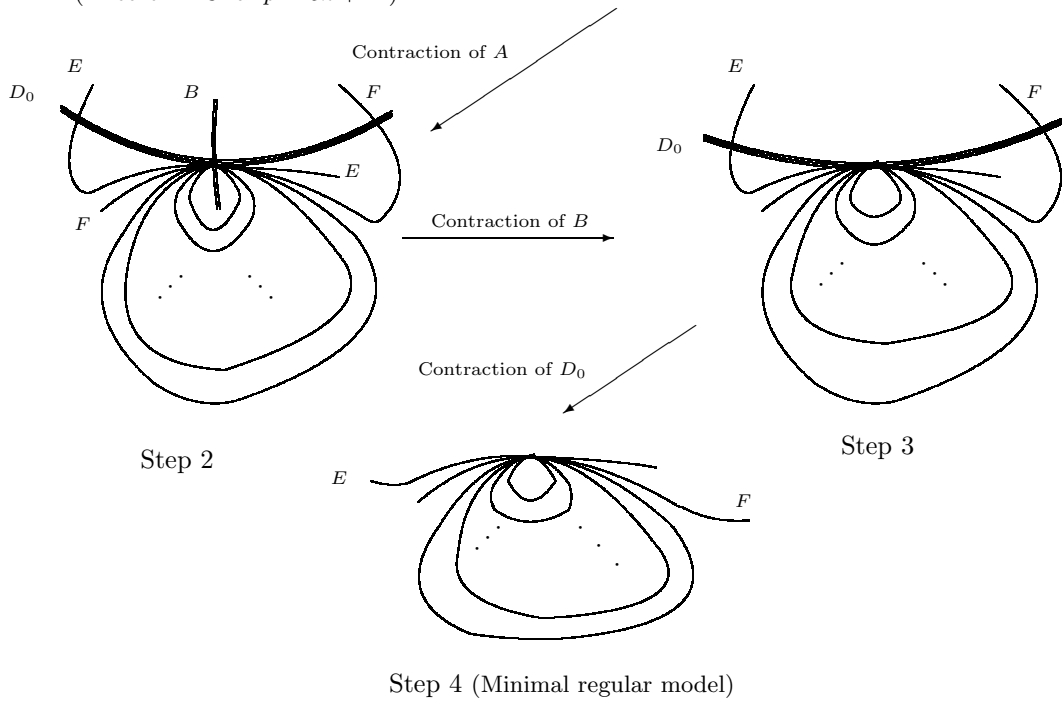
\begin{figure}
\begin{center}

\begin{picture}(320,410)(-30,-90)

\qbezier(-50,280)(5,290)(45,278)
\qbezier(-50,281)(5,291)(45,279)
\qbezier(-50,282)(5,292)(45,280)

\put(-67,275){\scriptsize$D_2$}
\put(54,275){\scriptsize$p+1$}


\put(-21,357){\scriptsize$E$}
\qbezier(-25,362)(-15,313)(-20,260)
\put(-32,360){\scriptsize$1$}
\put(-21,254){$\cdot$}
\put(-21,249){$\cdot$}
\put(-21,244){$\cdot$}
\qbezier(-20,243)(-22,222)(-26,202)

\put(23.5,357){\scriptsize$F$}
\qbezier(35,363)(15,302)(18,262)
\put(38,360){\scriptsize$1$}
\put(17,255){$\cdot$}
\put(18,250){$\cdot$}
\put(18,245){$\cdot$}
\qbezier(20,242)(22,213)(29,198)


\qbezier(-51,306)(5,321)(46,309)
\qbezier(-51,307)(5,322)(46,310)
\qbezier(-51,308)(5,323)(46,311)

\put(-67,310){\scriptsize$D_1$}
\put(53,305){\scriptsize$p+1$}

\qbezier(-48,217)(5,233)(59,218)
\qbezier(-48,216)(5,232)(59,217)
\qbezier(-48,215)(5,231)(59,216)

\put(-66,215){\scriptsize$D_k$}
\put(64,217){\scriptsize$p+1$}


\qbezier(0,393)(-5,270)(-2,200) 
\qbezier(1,393)(-4,270)(-1,200)
\qbezier(-1,393)(-6,270)(-3,200)

\put(-6,398){\scriptsize$p-1$}
\put(-11,390){\scriptsize$A$}


\put(-34,375){\scriptsize$B$}
\qbezier(-25,378)(5,376)(29,379)
\qbezier(-25,377)(5,375)(29,378)
\put(34,374){\scriptsize$\frac{p-1}{2}$}


\qbezier(-45,351)(5,350)(49,352)
\qbezier(-45,350)(5,349)(49,351)
\put(57,346){$\frac{p+1}{3}$}
\put(-59,348){\scriptsize$D_0$}


\put(-57,+265){$\cdot$}
\put(-57,248){$\cdot$}
\put(-57,229){$\cdot$}

\put(42,266){$\cdot$}
\put(43,247){$\cdot$}
\put(44,230){$\cdot$}



\put(-75,175){\small{Minimal regular model with normal crossings}}
\put(-45,162){\small{(Theorem~\ref{thSgrossier} for $p=5k+12$)}}


 
\put(83,301){\vector(1,0){76}}

\put(85,290){\scriptsize{Contraction of the $D_i$}}
\put(102,280){\scriptsize{for $i>0$}}


\qbezier(213,373)(245,371)(269,374)
\qbezier(213,372)(245,370)(269,373)
\put(195,370){\scriptsize$B$}


\qbezier(188,338)(245,341)(290,333)
\qbezier(188,337)(245,340)(290,332)
\put(174,334){\scriptsize$D_0$}

\put(211,346){\scriptsize$E$}

\put(268,345){\scriptsize$F$}



\put(232,392){\scriptsize$A$}

\qbezier(241,385)(243,270)(240,200)
\qbezier(240,385)(242,270)(239,200)
\qbezier(239,385)(241,270)(238,200)

%


\qbezier(200,345)(275,320)(240,300) 
\qbezier(240,300)(225,288)(225,280)

\qbezier(280,345)(205,320)(240,300)
\qbezier(240,300)(255,288)(255,280)

\qbezier(240,240)(230,245)(225,253)
\qbezier(200,195)(275,220)(240,240)

\qbezier(240,240)(250,245)(255,253)
\qbezier(280,195)(205,220)(240,240)

\put(255,259){$\cdot$}
\put(255,264){$\cdot$}
\put(255,269){$\cdot$}

\put(222,259){$\cdot$}
\put(222,264){$\cdot$}
\put(222,269){$\cdot$}



\put(225,175){Step 1}




 
\put(160,159){\vector(-3,-2){70}}

\put(60,140){\scriptsize{Contraction of $A$}}



\qbezier(-50,120)(10,80)(75,118)
\qbezier(-50,121)(10,81)(75,119)
\qbezier(-50,122)(10,82)(75,120)

\put(-70,125){\scriptsize$D_0$}

  

\qbezier(-37,90)(10,113)(64,80)

\qbezier(-35,82)(10,120)(61,73)

\qbezier(61,73)(90,35)(15,10)
\qbezier(15,10)(0,7)(-15,17)
\qbezier(-15,17)(-47,45)(-27,73)

\qbezier(10,99)(39,98)(55,95)


\qbezier(-27,73)(07,131)(57,63)

\qbezier(57,63)(71,41)(12,22)
\qbezier(12,22)(-34,24)(-24,65)

\qbezier(-24,65)(-17,94)(9,101)


\qbezier(-6,89)(07,109)(22,89)

\qbezier(-6,79)(07,55)(22,79)

\qbezier(-6,89)(-8,84)(-6,79)
\qbezier(22,79)(24,84)(22,89)

\qbezier(-1,90)(07,108)(17,90)

\qbezier(-1,85)(07,65)(17,85)

\qbezier(-1,85)(-2,87)(-1,90)
\qbezier(17,85)(18,87)(17,90)


\put(-6,55){$\cdot$}
\put(-10,51){$\cdot$} 
\put(-14,47){$\cdot$}  


\put(20,55){$\cdot$}
\put(24,51){$\cdot$} 
\put(28,47){$\cdot$}  


\put(-48,135){\scriptsize$E$}
\qbezier(-38,130)(-53,100)(-43,90)
\put(57,95){\scriptsize$E$}
\qbezier(-43,90)(-40,88)(-37,90)

\put(65,125){\scriptsize$F$}
\qbezier(55,130)(90,100)(72,80)
\put(-45,75){\scriptsize$F$}
\qbezier(72,80)(69,76)(64,80)


\put(-4,125){\scriptsize$B$}
\qbezier(8,124)(6,107)(09,83)
\qbezier(9,124)(7,107)(10,83)



\put(-20,-15){Step 2}


\put(79,72){\vector(1,0){80}}
\put(90,76){\scriptsize{Contraction of $B$}}


\qbezier(193,109)(263,85)(328,115)
\qbezier(193,110)(263,86)(328,116)
\qbezier(193,111)(263,87)(328,117)

\put(175,105){\scriptsize$D_0$}

\qbezier(213,90)(260,113)(314,80)

\qbezier(215,82)(260,120)(311,73)
 
\qbezier(311,73)(340,35)(265,10)
\qbezier(265,10)(250,7)(235,17)
\qbezier(235,17)(203,45)(223,73)

\qbezier(260,99)(289,98)(305,95)


\qbezier(223,73)(257,131)(307,63)
 
\qbezier(307,63)(321,41)(262,32)
\qbezier(262,32)(226,34)(226,55)

\qbezier(226,55)(233,94)(259,101)


\qbezier(244,89)(257,109)(272,89)

\qbezier(244,79)(257,60)(272,79)

\qbezier(244,89)(242,84)(244,79)
\qbezier(272,79)(274,84)(272,89)

\qbezier(249,90)(257,108)(267,90)

\qbezier(249,85)(257,75)(267,85)

\qbezier(249,85)(248,87)(249,90)
\qbezier(267,85)(268,87)(267,90)


\put(244,60){$\cdot$}
\put(240,56){$\cdot$} 
\put(236,52){$\cdot$}  


\put(275,60){$\cdot$}
\put(279,56){$\cdot$} 
\put(283,52){$\cdot$}  


\put(202,135){\scriptsize$E$}
\qbezier(212,130)(197,100)(207,90)

\qbezier(207,90)(210,88)(213,90)

\put(315,125){\scriptsize$F$}
\qbezier(305,130)(340,100)(322,80)
 
\qbezier(322,80)(319,76)(314,80)
 

\put(245,-10){Step 3}


 
\put(190,39){\vector(-3,-2){50}}

\put(85,20){\scriptsize{Contraction of $D_0$}}


\qbezier(83,-20)(130,3)(184,-30)

\qbezier(85,-28)(130,10)(181,-37)
 
\qbezier(181,-37)(210,-75)(135,-80)
\qbezier(135,-80)(120,-80)(105,-78)
\qbezier(105,-78)(73,-65)(93,-37)

\qbezier(130,-11)(159,-12)(175,-15)


\qbezier(93,-37)(128,19)(177,-47)
 
\qbezier(177,-47)(191,-69)(132,-72)
\qbezier(132,-72)(96,-66)(100,-48)

\qbezier(100,-48)(103,-16)(129,-10)


\qbezier(114,-21)(127,-1)(142,-21)

\qbezier(114,-31)(127,-40)(142,-31)

\qbezier(114,-31)(111,-26)(114,-21)

\qbezier(114,-31)(112,-26)(114,-31)
\qbezier(142,-31)(144,-26)(142,-21)

\qbezier(119,-20)(127,-2)(137,-20)

\qbezier(119,-20)(127,-35)(137,-20)


\put(114,-40){$\cdot$}
\put(110,-44){$\cdot$} 
\put(106,-48){$\cdot$}  


\put(145,-40){$\cdot$}
\put(152,-47){$\cdot$} 
\put(159,-54){$\cdot$}


\put(52,-21){\scriptsize$E$}

\qbezier(67,-20)(77,-23)(83,-20)

\put(205,-30){\scriptsize$F$}
\qbezier(185,-30)(196,-36)(210,-35)

\put(74,-100){Step 4 \small{(Minimal regular model)}}

 
\end{picture}
\end{center}
\caption{Minimal resolution of $X_{\mathrm{s}} (p)$ in the case $p=12k+5$} 
\label{figureSminimal5}
\end{figure}

\subsubsection{Component groups for ${J}_{\mathrm{s}} (p)$}

\begin{propo}
For $p\geq 5$ a prime, denote by ${\mathcal J}_{\mathrm{s}} (p)$ the N\'eron model over $\Z_p^{\mathrm{ur}}$ of the Jacobian of 
$X_{\mathrm{s}} (p)$. The component group ${\mathcal J}_{\mathrm{s}} (p)/{\mathcal J}^{0}_{\mathrm{s}} (p)$ of the special fiber 
of ${\mathcal J}_{\mathrm{s}} (p)$ is 
$$
\Z /\frac{p^2 -1}{24}\Z .$$
\end{propo}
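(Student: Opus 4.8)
The plan is to argue exactly as in the proof of Proposition~\ref{componentsXns}: by \cite{BLR}, Chapter~9.6, Corollary~3, the group $\mathcal{J}_{\mathrm{s}}(p)/\mathcal{J}^0_{\mathrm{s}}(p)$ is the torsion subgroup of the cokernel of the intersection matrix $M=(\Gamma_i\cdot\Gamma_j)$ of the special fiber of any of our regular models, for which I would take the one of Theorem~\ref{thSgrossier}. Since the two external Igusa components have multiplicity $1$, the greatest common divisor of the multiplicities is $1$, so this torsion subgroup is the component group with no correction factor; and because we work over $\Z_p^{\mathrm{ur}}$, whose residue field is algebraically closed, every normal crossing point with local ring $\Z_p^{\mathrm{ur}}[[X,Y]]/(X^aY^b-p)$ contributes a single transversal intersection (length $1$), so all intersection numbers of distinct components are $0$ or $1$ and there are no degree factors to track. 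Concretely $\mathrm{coker}(M)\cong\Z\oplus\Phi$ with $\Phi$ the sought finite group.

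First I would record the combinatorics. Label the central Igusa component $A$ (multiplicity $p-1$), the two external Igusa components $B_1,B_2$ (multiplicity $1$), and the generic Drinfeld components $D_1,\dots,D_k$ (multiplicity $p+1$), where $k$ is the number of supersingular $j$-invariants distinct from $0,1728$; each $D_j$ meets $A$, $B_1$ and $B_2$ once. The remaining components sit over $j\equiv0$ and $j\equiv1728$ and depend on $p\bmod12$: an ordinary $j$ contributes a line of multiplicity $(p-1)/2$ or $(p-1)/3$ meeting only $A$, while a supersingular $j$ contributes an exceptional Drinfeld line of multiplicity $(p+1)/2$ or $(p+1)/3$ meeting $A,B_1,B_2$. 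The self-intersections are then forced by the relation $M\cdot d=0$ ($d$ the multiplicity vector), giving for instance $D_j^2=-1$, $B_i^2$ a negative multiple of $p+1$, and an $A^2$ that comes out an integer precisely because of the $j\equiv0,1728$ contributions.

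Next I would present $\mathrm{coker}(M)$ by one generator per component and the rows of $M$ as relations, and eliminate. The Drinfeld rows give $D_j\sim a+b_1+b_2$ and, for the exceptional multiplicities, $2d\sim a+b_1+b_2$ or $3d\sim a+b_1+b_2$; the ordinary extra lines give $a\sim2\beta$ or $a\sim3\gamma$. Substituting all of these into the rows for $A,B_1,B_2$ collapses everything to relations among $a,b_1,b_2$, and subtracting the $B_1$ and $B_2$ rows isolates a single torsion relation of the form $N\,(b_1-b_2)=0$ together with a rank-one free part. In the cleanest case $p\equiv1\bmod12$ this relation is exactly $\tfrac{p^2-1}{24}(b_1-b_2)=0$ (with $\tfrac{p^2-1}{24}=k(6k+1)$ for $p=12k+1$), whence $\Phi\cong\Z/\tfrac{p^2-1}{24}\Z$; the structural point is that the whole group is generated by the class $b_1-b_2$ of the difference of the two external Igusa components, morally the two cusps $0,\infty$ of $X_0(p^2)\cong X_{\mathrm{s}}(p)$ (cf.\ Remark~\ref{dejavureduit}).

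The main obstacle is this last collapse carried out uniformly in all four residue classes $p\equiv1,5,7,11\bmod12$: the $j\equiv0$ and $j\equiv1728$ fibers genuinely differ (ordinary lines of multiplicity $(p-1)/2,(p-1)/3$ versus supersingular Drinfeld lines of multiplicity $(p+1)/2,(p+1)/3$), and one must check that in every case the bookkeeping conspires to produce the same cyclic order $N=\tfrac{p^2-1}{24}$ rather than, say, twice or thrice it. The arithmetic that makes $\tfrac{p^2-1}{24}$ integral for every prime $p\ge5$ is the same that forces the uniform answer, and the genus-zero cases $p=5,7$ are automatically consistent since there $\tfrac{p^2-1}{24}=1$ and the group is trivial.
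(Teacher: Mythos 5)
Your proposal is correct, and it rests on the same basic method as the paper's — \cite{BLR}, Chapter~9.6, Corollary~3, applied to the cokernel of the intersection matrix of a regular model — but it is run on a different model and organized differently, so the comparison is worth recording. The paper first contracts the multiplicity-$(p+1)$ Drinfeld components to reach the minimal regular model with normal crossings (Corollary~\ref{minimalXs}), so that the intersection matrix has fixed size $5\times 5$, writes it out explicitly for $p=12k+5$ in the basis $\{E,F,A,B,D_0\}$ of Figure~\ref{figureSminimal5}, and computes a Smith normal form, exactly as in Proposition~\ref{componentsXns}. You instead keep the uncontracted model of Theorem~\ref{thSgrossier}, whose matrix has unbounded size, and eliminate generators using its structure; I checked that this collapse works uniformly in all four congruence classes: the torsion relation comes out as $k(6k+1)(b_1-b_2)=0$ for $p=12k+1$, $(3k+1)(2k+1)(b_1-b_2)=0$ for $p=12k+5$, $(2k+1)(3k+2)(b_1-b_2)=0$ for $p=12k+7$, and $(k+1)(6k+5)(b_1-b_2)=0$ for $p=12k+11$, in every case exactly $\frac{p^2-1}{24}(b_1-b_2)=0$. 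What the paper's choice buys is a small explicit matrix that can be machine-checked; what yours buys is that no contraction is needed, the case analysis becomes structurally uniform, and one gets the extra information that the component group is cyclic, generated by the class of the difference of the two multiplicity-one Igusa components (consistent with the cuspidal/Eisenstein description recovered from \cite{EdEis}). Two harmless imprecisions in your sketch: first, the exceptional relations $2d\sim a+b_1+b_2$ and $3d\sim a+b_1+b_2$ cannot by themselves eliminate $d$ over $\Z$, so the collapse must first invoke the $A$-row, which expresses the exceptional $d$'s integrally in terms of $a,b_1,b_2$ (for instance $d_{-1}+d_0=a-k(b_1+b_2)$ when $p\equiv 11\bmod 12$), after which the $B_i$-rows do reduce to the single relation above; second, $B_i^2$ is a negative multiple of $p+1$ only when $p\equiv 1\bmod 12$, since in the other classes an exceptional Drinfeld component of multiplicity $(p+1)/2$ or $(p+1)/3$ also meets $B_i$ — but this aside plays no role in the argument.
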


(Here we recover Proposition~2 of~\cite{EdEis}.)

\begin{proof}
We do the same computations as for Proposition~\ref{componentsXns} -- but perhaps easier (here, at step $1$ of the minimal 
resolution process, that is, after contracting all Drinfeld components with multiplicity $p+1$, the type of the intersection 
matrices is fixed (always $5\times 5$)). For instance, for $p=12k+5$, one checks that the intersection matrix of the regular 
model at step $1$ is, in the basis $\{ E, F, A, B, D_0 \}$ of Figure~\ref{figureSminimal5},
$$
\left(
\begin{array}{ccccc}
(-12k^2 -9k -2) & k & k & 0 & 1\\
k & (-12k^2 -9k -2) & k & 0 & 1\\
k & k & -1 & 1  & 1\\
0 & 0 & 1 & -2 & 0\\
1 & 1 & 1 & 0 & -3
\end{array}
\right)
$$
which is easily put under Smith normal form. $\Box$
\end{proof}

\subsection{$X_{\mathrm{s}}^+ (p)$}

\begin{theo} 
\label{thS+grossier} 
For $p>11$ a prime, let $X_{\mathrm{s}}^+ (p)$ be the coarse modular curve over $\Q$ associated with 
$\Gamma_{\mathrm{s}}^+ (p)$.

\medskip

   The {\em minimal regular model with normal crossings} of $X_{\mathrm{s}}^+ (p)$ over $\Z_p^{\mathrm{ur}}$ has a
special fiber made of two vertical 
Igusa parts, which are linked by horizontal chains of Drinfeld components above each supersingular point. All 
irreducible components are (smooth) projective lines.

\medskip 
 
  The vertical parts are $j$-lines. One has multiplicity $(p-1)/2$  (in what follows, we call it the ``central 
one''). The other one has multiplicity $1$.  

\medskip

  If ${\cal S}$ is the number of supersingular $j$-invariants in $\F_{p^2}$, the ${\cal S}$ horizontal chains of 
(Drinfeld) components are almost all copies of branches, at least for $j\not\equiv 0$ or $1728\mod p$, which are 
rational curves $\PPP^1$, with multiplicity $p+1$, from which grows
another exceptional projective line with multiplicity $(p+1)/2$.   
According to the residue class of $p\mod 12$, the central Igusa component moreover has the following supplementary features: 
\begin{itemize}
\item {\bf If $p\equiv 1$ mod $12$}, then the central Igusa component intersects transversally an extra component above the 
(ordinary) invariant $j\equiv 0\mod p$, which is made of a projective line, with multiplicity $(p-1)/3$, followed by another 
$\PPP^1$, with multiplicity $(p-1)/6$.
\item  {\bf If $p\equiv 5$ mod $12$}, the central Igusa component has no extra component. However, there is one exceptional 
Drinfeld component, at the (supersingular) invariant $j\equiv 0$, which is a projective line, with multiplicity $(p+1)/3$, 
intersecting itself one extra projective line, with multiplicity $(p+1)/6$.
\item  {\bf If $p\equiv 7$ mod $12$}, the central Igusa component intersects transversally an extra component above the 
ordinary invariant $j\equiv 0\mod p$, which is as in the case $p\equiv 1$ {\rm mod} $12$: a projective line, with 
multiplicity $(p-1)/3$, followed by another $\PPP^1$, with multiplicity $(p-1)/6$. 
At the supersingular point $j\equiv 1728$, the central Igusa component directly intersects (transversally) the other Igusa 
line (which has multiplicity $1$).
\item  {\bf If $p\equiv 11$ mod $12$}, there are two exceptional supersingular $j$-invariants ($j\equiv 0$ and $j\equiv 
1728$), where the situation is as described in the two respective cases above.   
\end{itemize}
\medskip

   Any intersection point in the special fiber between two irreducible components, of multiplicity say $a$ and $b$ (both in 
$\{ 1, (p-1)/6, (p+1)/6, (p-1)/3, (p+1)/3, (p-1)/2, (p+1)/2, (p+1) \} )$, has completed local ring
$$
\Z_p^{\mathrm{ur}} [[X,Y]]/(X^{a} \cdot Y^{b} -p ).
$$ 
See Figure~\ref{figureS+grossier}. 
\end{theo}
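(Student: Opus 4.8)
The plan is to run the same machine as in the coarse non-split normalizer case (Theorem~\ref{thNS+grossier}), importing the ``generic'' Drinfeld analysis from the fine split normalizer case (Theorem~\ref{thS+}) and re-examining only the two exceptional $j$-invariants $0$ and $1728$. First I would take the semistable model of $X_{\mathrm{s}}^+(p)$ over $\Z_p^{\mathrm{ur}}[p^{2/(p^2-1)}]$ supplied by~\cite{EdixP20}, set $G={\mathrm{Gal}}(\Q_p^{\mathrm{ur}}(p^{2/(p^2-1)})/\Q_p^{\mathrm{ur}})\simeq\F_{p^2}^*/\{\pm 1\}$, and take the Galois quotient, reading off stabilizers from Proposition~\ref{stabilisateur}. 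Away from the points with extra automorphisms the computation is verbatim that of Theorem~\ref{thS+}: the two Igusa parts descend to $j$-lines of multiplicities $(p-1)/2$ and $1$, while each supersingular Drinfeld chain becomes a $\PPP^1$ of multiplicity $p+1$ from which, after blowing up the order-$2$ fixed points of~(\ref{DrinfeldXs+}), a single exceptional $\PPP^1$ of multiplicity $(p+1)/2$ rises; every crossing ring has the shape $\Z_p^{\mathrm{ur}}[[X,Y]]/(X^aY^b-p)$ with the relevant multiplicities.

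The only genuinely new input sits at $j\equiv 0$ and $j\equiv 1728 \bmod p$. There the analysis of the Igusa part is \emph{identical} to the one already carried out for $X_{\mathrm{ns}}^+(p)$ in the proof of Theorem~\ref{thNS+grossier}, since it only involves the action on the universal deformation space of the underlying curve and is blind to split versus non-split: when $j\equiv 1728$ is ordinary ($p\equiv 1\bmod 4$) the quotient Igusa line is already regular and nothing is appended; when $j\equiv 0$ is ordinary ($p\equiv 1\bmod 3$) the inertia acts non-diagonally as in~(\ref{lactionzeta3}), so after passing to $\Z_p^{\mathrm{ur}}[\pi_6=p^{6/(p-1)}]$ and blowing up the local model $\Z_p^{\mathrm{ur}}[\pi_6][X,Y]/(Y^3-\pi_6 X)$ one obtains, upon descent to $\Z_p^{\mathrm{ur}}$, a transversal chain of a $(p-1)/3$-line followed by a $(p-1)/6$-line. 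For the supersingular exceptional invariants I would instead work on the Drinfeld side with the \emph{split} equation~(\ref{DrinfeldXs+}): the extra automorphism factor ($2$ at $j\equiv 1728$, $3$ at $j\equiv 0$) divides the multiplicities, giving a $(p+1)/2$-line at $j\equiv 1728$ and a $(p+1)/3$-line at $j\equiv 0$; and because the split normalizer Drinfeld chain carries only one generic exceptional line (against the two of the non-split Theorem~\ref{thNS+}, the split chain having one extra Igusa intersection), the $j\equiv 0$ supersingular case produces a single blown-up exceptional $\PPP^1$ of multiplicity $(p+1)/6$ rather than the two lines of Theorem~\ref{thNS+grossier}.

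Finally I would assemble these local pictures according to whether each of $0$ and $1728$ is ordinary or supersingular, the dichotomy for $1728$ being dictated by $p\bmod 4$ and that for $0$ by $p\bmod 3$; this yields exactly the four congruence classes of the statement, and the crossing rings $\Z_p^{\mathrm{ur}}[[X,Y]]/(X^aY^b-p)$ with $a,b$ among $\{1,(p\mp1)/6,(p\mp1)/3,(p-1)/2,(p+1)/2,p+1\}$ follow from the explicit invariant-ring computations as in Theorems~\ref{thNS}, \ref{thNS+} and~\ref{thNS+grossier}. The step I expect to be the main obstacle is the $j\equiv 0$ analysis: since the inertia on the deformation parameter is not a pseudo-reflection, Serre's criterion does not apply and one must perform the explicit blow-up of $(Y^3-\pi_6 X)$, verifying that its charts regularize the surface and deposit precisely the multiplicities $(p-1)/6$ or $(p+1)/6$; the remaining cases are bookkeeping adaptations of Theorems~\ref{thS+} and~\ref{thNS+grossier}, with the recurring care needed to place the factors of $2$ coming from the normalizer quotient on the correct components.
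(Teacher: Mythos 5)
Your proposal is correct and takes essentially the same route as the paper: the paper's own proof is a two-line remark that, starting from the semistable model of Theorem~4.4 of~\cite{EdixP20}, "the proofs and arguments of Theorem~\ref{thNS+grossier} and Theorem~\ref{thS+} can be transposed with obvious and small modifications," and your write-up is precisely that combination, carried out explicitly. Your key observation---that in the split case the point $V=0$ on the quotient Drinfeld model comes from a point at infinity of the original Drinfeld curve, hence is an (external) Igusa intersection, so that the multiplicity-one exceptional lines of the non-split Theorems~\ref{thNS+} and~\ref{thNS+grossier} disappear and the supersingular $j\equiv 1728$ (resp. $j\equiv 0$) component carries no extra line (resp. only the $(p+1)/6$-line)---is exactly the "small modification" the paper leaves implicit.
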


\begin{rema}
\label{petitsminimauxsplit+}
{\rm{Once more, Theorem~\ref{thS+grossier} also gives regular models for all $p\geq 5$, but we 
have excluded primes $p\leq 11$ from its statement because of the non-minimality of our regular model with normal crossings 
for those small primes (cf. Remark~\ref{petitsminimauxNsplit+} etc.). However, and as in loc. cit. again, the 
$X_{\mathrm{s}}^+ (p)$ for $p\leq 7$ are just projective lines. For $X_{\mathrm{s}}^+ (11)$, one sees that its
minimal regular model with normal crossings is found from the regular one given in Theorem~\ref{thS+grossier} by 
contracting the central Igusa component in the special fiber. 

    In fact, note that our models for $X_{\mathrm{ns}}^+ (p)$ and $X_{\mathrm{s}}^+ (p)$ have {\em the same} special fiber 
for $p\le 7$ and $p= 13$. When $p=13$, that illustrates Baran's isomorphism (\cite{Baran}). (The same isomorphism of special fibers holds for $X_{\mathrm{ns}} (p)$ and $X_{\mathrm{s}} (p)$, in the same range $p\leq 7$, $p=13$.)}}
\end{rema}

\begin{rema}
\label{etmaintenantlegenredeSplit+}
{\rm{Extending the comparison between the split and nonsplit cases, one can also use Theorem~\ref{thS+grossier} to check the 
genera of the 
$X_{\mathrm{s}}^+ (p)$ (cf. Remark~\ref{etmaintenantlegenredeNsplit+}). Again Theorem~\ref{thS+grossier} provides us with
SNC-models, in the terminology of~\cite{Dino90}, 
with genus-$0$ irreducible components in their special fiber. So writing $G$ for the dual graph of the special fiber and
$\beta$ the Betti number of $G$ (which here is nonzero), we use the formula $g(X_{\mathrm{s}}^+ (p)) =\beta +\frac{1}{2} 
\sum_{v} (r_v -1)(d_v -2)$ (where the above sum runs through the vertices $v$ of $G$, for which $d_v$ denotes the degree of 
$v$ and $r_v$ is the multiplicity of the irreducible component corresponding to it (\cite{Dino90}, top of p.~150)). Computing 
genera in each of the four congruence cases mod $12$ of Theorem~\ref{thS+grossier} gives back the formula:
$$
g(X_{\mathrm{s}}^+ (p))=\frac{p^2 -8\, p +11 -4\left( {-3 \atop p} \right)}{24}
$$
(\cite{Ma76}, p. 117). Actually, that strong parallelism between the split and nonsplit case (compare 
Figure~\ref{figureNSgrossier+} and Figure~\ref{figureS+grossier}) is reflected in the fact that our Lorenzini's formula 
directly gives
$$
g(X_{\mathrm{s}}^+ (p))= g(X_{\mathrm{ns}}^+ (p)) +\beta ,
$$  
a relation predicted by Chen-Edixhoven's isogeny (\cite{dSE}): 
$$
{\mathrm{Jac}} (X_{\mathrm{s}}^+ (p)) \sim {\mathrm{Jac}} (X_{\mathrm{ns}}^+ (p)) \times {\mathrm{Jac}} (X_0 (p) ),
$$
as ${\mathrm{Jac}} (X_0 (p))$ has purely toric reduction of same rank as that of ${\mathrm{Jac}} (X_{\mathrm{s}}^+ (p)) $. 
Cf. Figure~\ref{XS+vsXNS+}.}} 
\end{rema}

\begin{figure}
\begin{center}

\begin{picture}(320,410)(-30,-90)

\qbezier(-50,280)(5,290)(45,278)
\qbezier(-50,281)(5,291)(45,279)
\qbezier(-50,282)(5,292)(45,280)

\put(-73,275){\scriptsize$D_2$}
\put(52,275){\scriptsize$p+1$}

\qbezier(-32,293)(-29,283)(-31,270)
\qbezier(-31,293)(-28,283)(-30,270)
\put(-44,296){$\frac{p+1}{2}$}

\qbezier(20,327)(13,263)(23,202)
\put(25,326){\scriptsize$1$}

\qbezier(-49,306)(5,321)(46,309)
\qbezier(-49,307)(5,322)(46,310)
\qbezier(-49,308)(5,323)(46,311)

\put(-73,305){\scriptsize$D_1$}
\put(52,305){\scriptsize$p+1$}

\qbezier(-27,320)(-24,308)(-26,295)
\qbezier(-26,320)(-23,308)(-25,295)
\put(-29,323){$\frac{p+1}{2}$}

\qbezier(-48,217)(5,233)(59,218)
\qbezier(-48,216)(5,232)(59,217)
\qbezier(-48,215)(5,231)(59,216)

\put(-66,215){\scriptsize$D_k$}
\put(64,217){\scriptsize$p+1$}

\qbezier(-25,245)(-23,224)(-26,206)
\qbezier(-24,245)(-22,224)(-25,206)
\put(-27,248){$\frac{p+1}{2}$}


\qbezier(0,373)(-5,270)(-2,200) 
\qbezier(1,373)(-4,270)(-1,200)
\qbezier(-1,373)(-6,270)(-3,200)

\put(-9,384){$\frac{p-1}{2}$}


\put(19,373){\scriptsize{$j\equiv 0$}}
\put(16,373){\vector(-1,-1){13}}
\qbezier(-25,358)(5,357)(29,359)
\qbezier(-25,357)(5,356)(29,358)
\put(34,353){$\frac{p-1}{3}$}

\qbezier(-17,370)(-14,368)(-16,345)
\qbezier(-16,370)(-13,368)(-15,345)
\put(-32,373){$\frac{p-1}{6}$}


\put(-45,+270){$\cdot$}
\put(-45,253){$\cdot$}
\put(-45,234){$\cdot$}

\put(42,266){$\cdot$}
\put(43,247){$\cdot$}
\put(44,230){$\cdot$}


\put(-20,175){$p=12k+1$}





\qbezier(188,338)(245,341)(290,333)
\qbezier(188,337)(245,340)(290,332)

\put(164,334){$\frac{p+1}{3}$}

\qbezier(213,354)(215,345)(212,330)
\qbezier(214,354)(216,345)(213,330)
\put(209,357){$\frac{p+1}{6}$}

\qbezier(266,353)(254,282)(267,201)
\put(268,354){\scriptsize$1$}

\put(259,367){\scriptsize{$j\equiv 0$}}
\put(257,366){\vector(-1,-2){13}}


\qbezier(182,306)(245,316)(295,301)
\qbezier(182,305)(245,315)(295,300)
\qbezier(182,304)(245,314)(295,299)
\put(305,297){\scriptsize$D_1$}
\put(157,302){\scriptsize$p+1$}

\qbezier(180,279)(245,286)(296,270)
\qbezier(180,278)(245,285)(295,269)
\qbezier(180,277)(245,284)(295,268)
\put(305,268){\scriptsize$D_2$}
\put(157,274){\scriptsize$p+1$}

\qbezier(188,226)(245,236)(295,226)
\qbezier(188,225)(245,235)(295,225)
\qbezier(188,224)(245,234)(295,224)
\put(305,220){\scriptsize$D_k$}
\put(164,221){\scriptsize$p+1$}

\put(221,271){$\cdot$}
\put(222,253){$\cdot$}
\put(224,234){$\cdot$}


\qbezier(216,264)(216,273)(212,292)
\qbezier(217,264)(217,273)(213,292)
\put(207,296){$\frac{p+1}{2}$}

\qbezier(200,293)(201,305)(196,322)
\qbezier(199,293)(200,305)(195,322)
\put(192,324){$\frac{p+1}{2}$}

\qbezier(208,207)(211,225)(208,248)
\qbezier(207,207)(210,225)(207,248)
\put(202,253){$\frac{p+1}{2}$}


\put(232,380){$\frac{p-1}{2}$}

\qbezier(241,370)(240,270)(234,200)
\qbezier(240,370)(239,270)(233,200)
\qbezier(239,370)(238,270)(232,200)

%


\put(215,175){$p=12k+5$}



\qbezier(-50,10)(5,20)(45,8)
\qbezier(-50,11)(5,21)(45,9)
\qbezier(-50,12)(5,22)(45,10)

\put(-73,5){\scriptsize$D_2$}
\put(54,5){\scriptsize$p+1$}

\qbezier(-35,23)(-32,13)(-34,0)
\qbezier(-36,23)(-33,13)(-35,0)
\put(-48,25){$\frac{p+1}{2}$}

\qbezier(-49,36)(5,47)(46,35)
\qbezier(-49,37)(5,48)(46,36)
\qbezier(-49,38)(5,49)(46,37)

\put(-73,35){\scriptsize$D_1$}
\put(54,35){\scriptsize$p+1$}

\qbezier(-27,50)(-24,38)(-26,25)
\qbezier(-26,50)(-23,38)(-25,25)
\put(-29,53){$\frac{p+1}{2}$}

\qbezier(-48,-47)(5,-37)(59,-52)
\qbezier(-48,-46)(5,-36)(59,-51)
\qbezier(-48,-45)(5,-35)(59,-50)

\put(-68,-52){\scriptsize$D_k$}
\put(65,-53){\scriptsize$p+1$}

\qbezier(-24,-25)(-22,-46)(-25,-57)
\qbezier(-25,-25)(-23,-46)(-26,-57)
\put(-24,-22){$\frac{p+1}{2}$}


\qbezier(0,123)(11,0)(-4,-70) 
\qbezier(1,123)(12,0)(-3,-70)
\qbezier(2,123)(13,0)(-2,-70)

\put(-9,131){$\frac{p-1}{2}$}


\put(18,106){\scriptsize{$j\equiv 0$}}
\put(17,107){\vector(-2,-1){12}}
\qbezier(-28,101)(5,99)(31,102)
\qbezier(-28,100)(5,98)(31,101)
\put(36,93){$\frac{p-1}{3}$}

\qbezier(-15,114)(-12,102)(-14,89)
\qbezier(-14,114)(-11,102)(-13,89)
\put(-32,119){$\frac{p-1}{6}$}



\qbezier(-16,81)(50,36)(22,-64)
\put(-24,80){\scriptsize$1$}

\put(23,70){\scriptsize{$j\equiv 1728$}}
\put(20,69){\vector(-2,-1){12}}


\put(-45,-0){$\cdot$}
\put(-45,-17){$\cdot$}
\put(-45,-36){$\cdot$}

 \put(42,-4){$\cdot$}
\put(43,-23){$\cdot$}
\put(44,-40){$\cdot$}


\put(-20,-95){$p= 12k+7$}





\qbezier(221,115)(268,97)(267,-65)
\put(215,118){\scriptsize$1$}

\put(259,105){\scriptsize{$j\equiv 1728$}}
\put(256,105){\vector(-2,-1){12}}


\qbezier(188,68)(245,71)(290,63)
\qbezier(188,67)(245,70)(290,62)

\put(164,64){$\frac{p+1}{3}$}

\qbezier(210,82)(212,73)(209,58)
\qbezier(211,82)(213,73)(210,58)
\put(208,85){$\frac{p+1}{\mathrm{6}}$}

\qbezier(182,36)(245,46)(295,31)
\qbezier(182,35)(245,45)(295,30)
\qbezier(182,34)(245,44)(295,29)
\put(305,27){\scriptsize$D_1$}
\put(157,32){\scriptsize$p+1$}

\qbezier(180,9)(245,16)(296,0)
\qbezier(180,8)(245,15)(295,-1)
\qbezier(180,7)(245,14)(295,-2)
\put(305,-2){\scriptsize$D_2$}
\put(156,4){\scriptsize$p+1$}

\qbezier(188,-44)(245,-34)(295,-44)
\qbezier(188,-45)(245,-35)(295,-45)
\qbezier(188,-46)(245,-36)(295,-46)
\put(305,-50){\scriptsize$D_k$}
\put(164,-49){\scriptsize$p+1$}

\put(221,1){$\cdot$}
\put(222,-17){$\cdot$}
\put(224,-36){$\cdot$}


\qbezier(216,-6)(216,13)(212,22)
\qbezier(215,-6)(215,13)(211,22)
\put(207,26){$\frac{p+1}{2}$}

\put(210,47){\scriptsize{$j\equiv 0$}}
\put(231,52){\vector(1,2){6}}

\qbezier(202,23)(203,35)(198,52)
\qbezier(201,23)(202,35)(197,52)
\put(185,53){$\frac{p+1}{2}$}

\qbezier(208,-63)(211,-45)(208,-22)
\qbezier(209,-63)(212,-45)(209,-22)
\put(202,-17){$\frac{p+1}{2}$}



\put(232,132){$\frac{p-1}{2}$}

\qbezier(241,122)(244,0)(237,-70)
\qbezier(240,122)(243,0)(236,-70)
\qbezier(239,123)(242,0)(235,-70)

%


\put(215,-95){$p=12k+11$}

 
\end{picture}
\end{center}
\caption{Special fiber over $\overline{\F}_p$ of the minimal regular model with normal crossings of $X_{\mathrm{s}}^+ (p)$}  
\label{figureS+grossier}
\end{figure}

\begin{proof}

Starting from Theorem~4.4 of~\cite{EdixP20}, one checks that the proofs and arguments of Theorem~\ref{thNS+grossier} and 
Theorem~\ref{thS+} above can be transposed with obvious and small modifications. $\Box$

\end{proof}

\begin{figure}

\begin{center}

\begin{picture}(320,480)(-30,-160)

\qbezier(-50,280)(5,290)(45,278)
\qbezier(-50,281)(5,291)(45,279)
\qbezier(-50,282)(5,292)(45,280)

\put(-67,275){\scriptsize$D_2$}
\put(54,275){\scriptsize$p+1$}

\put(-26,291){\scriptsize$C_2$}
\qbezier(-28,294)(-25,280)(-26,268)
\qbezier(-29,294)(-26,280)(-27,268)
\put(-48,292){\scriptsize$\frac{p+1}{2}$}


\put(-21,361){\scriptsize$C_0$}
\qbezier(-25,362)(-22,353)(-23,335)
\qbezier(-26,362)(-23,353)(-24,335)
\put(-47,359){\scriptsize$\frac{p+1}{6}$}

\put(24.5,364){\scriptsize$B$}
\qbezier(35,363)(15,302)(18,262)
\put(38,360){\scriptsize$1$}
\put(17,255){$\cdot$}
\put(18,250){$\cdot$}
\put(18,245){$\cdot$}
\qbezier(20,242)(22,213)(29,198)


\qbezier(-51,306)(5,321)(46,309)
\qbezier(-51,307)(5,322)(46,310)
\qbezier(-51,308)(5,323)(46,311)

\put(-67,310){\scriptsize$D_1$}
\put(53,305){\scriptsize$p+1$}

\put(-21,326){\scriptsize$C_1$}
\qbezier(-25,327)(-22,313)(-23,301)
\qbezier(-26,327)(-23,313)(-24,301)
\put(-43,329){\scriptsize$\frac{p+1}{2}$}

\qbezier(-48,217)(5,233)(59,218)
\qbezier(-48,216)(5,232)(59,217)
\qbezier(-48,215)(5,231)(59,216)

\put(-66,215){\scriptsize$D_k$}
\put(64,217){\scriptsize$p+1$}

\put(-21,232){\scriptsize$C_k$}
\qbezier(-25,232)(-22,218)(-23,200)
\qbezier(-26,232)(-23,218)(-24,200)
\put(-43,233){\scriptsize$\frac{p+1}{2}$}


\qbezier(0,393)(-5,270)(-2,200) 
\qbezier(1,393)(-4,270)(-1,200)
\qbezier(-1,393)(-6,270)(-3,200)

\put(-15,389){\scriptsize$A$}
\put(4,379){$\frac{p-1}{2}$}

\qbezier(-45,351)(5,350)(49,352)
\qbezier(-45,350)(5,349)(49,351)
\put(57,346){$\frac{p+1}{3}$}
\put(-59,348){\scriptsize$D_0$}


\put(-57,+265){$\cdot$}
\put(-57,248){$\cdot$}
\put(-57,229){$\cdot$}

\put(42,266){$\cdot$}
\put(43,247){$\cdot$}
\put(44,230){$\cdot$}


\put(36,200){\scriptsize$B$}



\put(-75,175){\small{Minimal regular model with normal crossings}}
\put(-40,163){\small{(Theorem~\ref{thS+grossier} for $p=12k+5$).}}


 
\put(83,301){\vector(1,0){76}}

\put(85,290){\scriptsize{Contraction of the $D_i$}}
\put(102,280){\scriptsize{for $i>0$}}





\qbezier(188,341)(245,344)(290,336)
\qbezier(188,340)(245,343)(290,335)
 
\put(174,337){\scriptsize$D_0$}

\put(270,348){\scriptsize$B$}

\put(204,351){\scriptsize$C_0$}
\qbezier(200,352)(203,343)(202,325)
\qbezier(201,352)(204,343)(203,325)


\put(227,382){\scriptsize$A$}

\qbezier(241,385)(243,270)(240,200)
\qbezier(240,385)(242,270)(239,200)
\qbezier(239,385)(241,270)(238,200)

%



\qbezier(280,345)(205,320)(240,300)
\qbezier(240,300)(255,288)(255,280)
\qbezier(255,280)(254,278)(252,275)


\qbezier(225,253)(225,256)(227,259)
\qbezier(240,240)(230,245)(225,253)
\qbezier(200,195)(275,220)(240,240)

 
\qbezier(234,335)(242,327)(251,315)
\qbezier(234,336)(242,328)(251,316)
\put(218,333){\scriptsize$C_1$}
 

\qbezier(253,308)(241,300)(232,288)
\qbezier(253,309)(241,301)(232,289)
\put(255,303){\scriptsize$C_2$}



\qbezier(253,248)(241,240)(230,233)
\qbezier(253,249)(241,241)(230,234)
\put(255,243){\scriptsize$C_{k-1}$}


\qbezier(232,218)(242,210)(251,203)
\qbezier(232,219)(242,211)(251,204)
\put(218,217){\scriptsize$C_k$}


\put(255,261){$\cdot$}
\put(255,264){$\cdot$}
\put(255,267){$\cdot$}

\put(222,261){$\cdot$}
\put(222,264){$\cdot$}
\put(222,267){$\cdot$}


\put(193,201){\scriptsize$B$}


\put(215,165){Step 1}




 
\put(160,159){\vector(-3,-2){70}}

\put(60,148){\scriptsize{Contraction of the $C_i$,}} 
\put(90,140){\scriptsize{$i>0$}}



\qbezier(-30,110)(0,95)(45,108)
\qbezier(-30,111)(0,96)(45,109)

\put(-47,105){\scriptsize$D_0$}

\qbezier(-15,125)(-10,105)(-13,85)
\qbezier(-14,125)(-9,105)(-12,85)

\put(-30,126){\scriptsize$C_0$}



\put(35,115){\scriptsize$B$}

\qbezier(35,130)(30,110)(20,100)
\qbezier(20,100)(0,86)(17,75)
\qbezier(17,75)(25,70)(17,65) 
\qbezier(17,65)(1,55)(20,45)  
\qbezier(20,45)(27,40)(21,33)  


\put(19,28){$\cdot$}
\put(17,24){$\cdot$} 
\put(15,20){$\cdot$}  


\qbezier(14,18)(4,8)(22,-2) 


\put(-4,125){\scriptsize$A$}
\qbezier(8,124)(6,87)(09,-0)
\qbezier(9,124)(7,87)(10,-0)
\qbezier(10,124)(8,87)(11,-0)



\put(-20,-15){Step 2}




\put(79,72){\vector(1,0){80}}
\put(90,76){\scriptsize{Contraction of $A$}}


\qbezier(190,105)(250,97)(315,105)
\qbezier(190,106)(250,98)(315,106)

\put(175,103){\scriptsize$D_0$}


\qbezier(203,117)(206,107)(202,87)
\qbezier(204,117)(207,107)(203,87)

\put(192,119){\scriptsize$C_0$}


\put(301,113){\scriptsize$B$}

\qbezier(295,117)(308,99)(284,97)
\qbezier(284,97)(278,96)(272,98)
\qbezier(272,98)(238,108)(212,84)
\qbezier(212,84)(196,60)(219,33)
\qbezier(219,33)(249,7)(286,33)
\qbezier(286,33)(309,57)(281,88)
\qbezier(281,88)(248,113)(223,88)
\qbezier(223,88)(201,68)(226,41)
\qbezier(226,41)(257,19)(281,49)
\qbezier(281,49)(291,59)(281,79)
\qbezier(281,79)(246,122)(227,79)
\qbezier(227,79)(223,68)(227,59)
\qbezier(227,59)(252,27)(271,61)

\put(271,65){$\cdot$}
\put(272,71){$\cdot$} 
\put(270,76){$\cdot$}


\put(257,57){\scriptsize$B$}

\qbezier(249,102)(261,99)(268,85)

\qbezier(247,102)(226,110)(231,117)

\put(237,116){\scriptsize$B$}



\put(245,-10){Step 3}



 
\put(170,23){\vector(-3,-2){90}}

\put(060,-0){\scriptsize{Contraction of $D_0$}}
 


\qbezier(-37,-42)(6,-50)(56,-37) 
\qbezier(-37,-43)(6,-51)(56,-38)
\put(-49,-43){\scriptsize$C_0$}

 
\put(34,-25){\scriptsize$B$}
\qbezier(11,-44)(30,-34)(31,-22)

\qbezier(5,-46)(-12,-48)(-26,-63)
\qbezier(-26,-63)(-43,-87)(-20,-114)

\qbezier(-20,-114)(8,-138)(46,-114)
\qbezier(46,-114)(69,-90)(41,-59)
\qbezier(41,-59)(9,-33)(-17,-59)
\qbezier(-17,-59)(-39,-79)(-13,-106)
\qbezier(-13,-106)(17,-130)(41,-100)
\qbezier(41,-100)(51,-91)(41,-70)
\qbezier(41,-70)(4,-22)(-13,-70)

\put(-15,-77){$\cdot$}
\put(-17,-82){$\cdot$} 
\put(-16,-88){$\cdot$}

\qbezier(-13,-91)(12,-123)(31,-90)
\qbezier(31,-90)(38,-80)(34,-70)
\qbezier(34,-70)(20,-47)(8,-45)

\put(0,-98){\scriptsize$B$}

\qbezier(8,-45)(-17,-38)(-10,-30)

\put(-5,-30){\scriptsize$B$}



\put(-10,-140){Step 4}


\put(98,-67){\vector(1,0){100}}
\put(119,-87){\scriptsize{Contraction of $C_0$}}


\put(314,-20){\scriptsize$B$}
\qbezier(277,-46)(309,-38)(311,-18)

\qbezier(277,-46)(258,-47)(243,-65)
\qbezier(243,-65)(227,-87)(250,-114)

\qbezier(250,-114)(278,-138)(316,-114)
\qbezier(316,-114)(339,-90)(311,-59)
\qbezier(311,-59)(279,-33)(253,-59)
\qbezier(253,-59)(231,-79)(257,-106)
\qbezier(257,-106)(287,-130)(311,-100)
\qbezier(311,-100)(321,-91)(311,-70)
\qbezier(311,-70)(274,-22)(257,-70)

\put(255,-77){$\cdot$}
\put(253,-82){$\cdot$} 
\put(254,-88){$\cdot$}

\qbezier(257,-91)(282,-123)(301,-90)
\qbezier(301,-90)(308,-80)(301,-65)
\qbezier(301,-65)(295,-50)(274,-43)

\put(272,-102){\scriptsize$B$}

\qbezier(274,-43)(260,-39)(253,-35)

\put(241,-36){\scriptsize$B$}





\put(192,-140){Step 5 (Minimal regular model)}


\end{picture}
\end{center}
\caption{Minimal resolution of $X_{\mathrm{s}}^+ (p)$ in the case $p=12k+5$} 
\label{figureS+minimal5}
\end{figure}
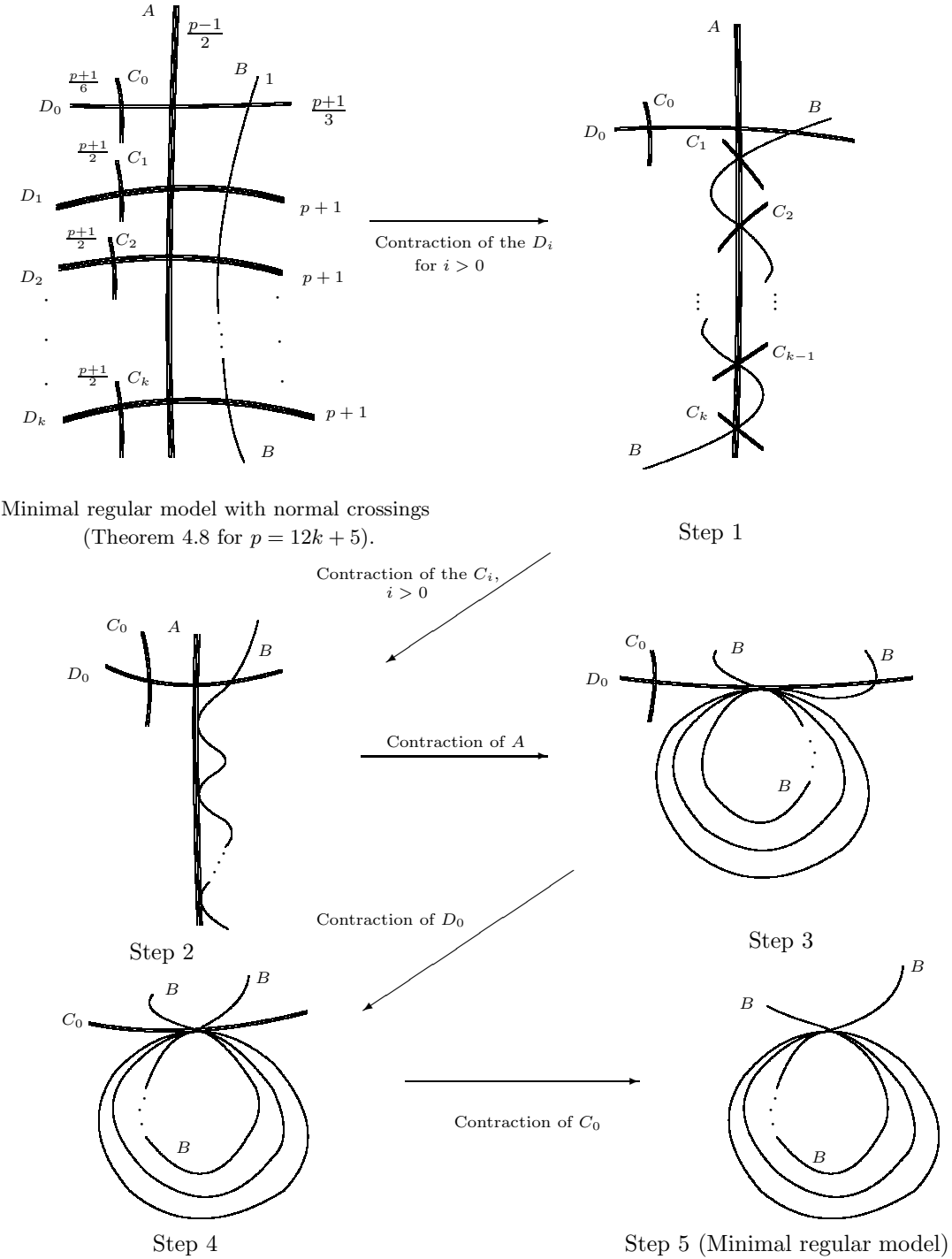

\begin{coro}
\label{minimalXs+}
Let $p$ be a prime number. The special fiber of the {\em minimal regular model} of $X_{\mathrm{s}}^+ (p)$ over 
$\Z_p^{\mathrm{ur}}$ is reduced, made of one single component, which is a projective line with multiplicity $1$ 
intersecting itself (many times) at one singular point (unless $p\leq 7$, where the curve has genus $0$). 
\end{coro}

\begin{proof}

We do the same computations as for Corollaries~\ref{minimalXns^+} or \ref{minimalXs}. To be explicit in the case $p=12k+5$ 
for instance, we contract the Drinfeld components with multiplicity $p+1$; then we contract the components with multiplicity 
$(p+1)/2$, then the central Igusa component (multiplicity $(p-1)/2$), the component with multiplicity $(p+1)/3$, and finally 
the one with multiplicity $(p+1)/6$. See Figure~\ref{figureS+minimal5}. $\Box$
\end{proof}

\begin{figure}
\begin{center}
\begin{picture}(50,7)(80,-60)

\put(5,1){\circle{16}}
\put(1,0){\scriptsize{$A$}}

\put(-3,-2){\line(-3,-1){60}}
\put(-68,-28){\circle{16}}
\put(-74,-29){\scriptsize{$D_{0}$}}

\put(-60,-28){\line(1,0){8}}
\put(-43,-28){\circle{16}}
\put(-49,-29){\scriptsize{$E_0$}}

\put(-68,-35){\line(0,-1){9}}
\put(-68,-53){\circle{16}}
\put(-74,-54){\scriptsize{$F_0$}}

\put(0,-7){\line(-1,-1){14}}
\put(-18,-28){\circle{16}}
\put(-24,-29){\scriptsize{$D_{1}$}}

\put(-10,-28){\line(1,0){8}}
\put(7,-28){\circle{16}}
\put(1,-29){\scriptsize{$E_1$}}

\put(-18,-35){\line(0,-1){9}}
\put(-18,-53){\circle{16}}
\put(-22,-54){\scriptsize{$F_1$}}

\put(5,-58){\small{$(X_{\mathrm{ns}}^+ (p))$}}

\put(31,-30){$\cdots$}

\put(12,-3){\line(3,-1){50}}
\put(62,-28){\circle{16}}
\put(56,-29){\scriptsize{$D_k$}}

\put(70,-28){\line(1,0){8}}
\put(87,-28){\circle{16}}
\put(81,-29){\scriptsize{$E_k$}}

\put(62,-35){\line(0,-1){9}}
\put(62,-53){\circle{16}}
\put(56,-54){\scriptsize{$F_k$}}


\put(205,1){\circle{16}}
\put(201,0){\scriptsize{$A$}}

\put(197,-2){\line(-3,-1){60}}
\put(132,-28){\circle{16}}
\put(126,-29){\scriptsize{$D_{0}$}}

\put(140,-28){\line(1,0){8}}
\put(157,-28){\circle{16}}
\put(151,-29){\scriptsize{$C_0$}}


\put(200,-6){\line(-1,-1){15}}
\put(182,-28){\circle{16}}
\put(176,-29){\scriptsize{$D_{1}$}}

\put(190,-28){\line(1,0){8}}
\put(207,-28){\circle{16}}
\put(201,-29){\scriptsize{$C_1$}}

\put(202,-63){\circle{16}}
\put(198,-65){\scriptsize{$B$}}

\put(194,-65){\line(-2,1){60}}
\put(196,-57){\line(-1,2){11}}
\put(209,-61){\line(3,2){45}}

\put(235,-58){\small{$(X_{\mathrm{s}}^+ (p))$}}

\put(231,-30){$\cdots$}

\put(212,-3){\line(3,-1){50}}
\put(262,-28){\circle{16}}
\put(256,-29){\scriptsize{$D_k$}}

\put(270,-28){\line(1,0){8}}
\put(287,-28){\circle{16}}
\put(281,-29){\scriptsize{$C_k$}}

\end{picture}
 
\end{center}
\caption{Dual graphs of the special fibers of $X_{\mathrm{ns}}^+ (p)$ vs. $X_{\mathrm{s}}^+ (p)$ (case $p=12k+5$).
(The components are labelled as in Figures~\ref{figureNS+minimal5} and~\ref{figureS+minimal5} respectively.)} 
\label{XS+vsXNS+}
\end{figure}
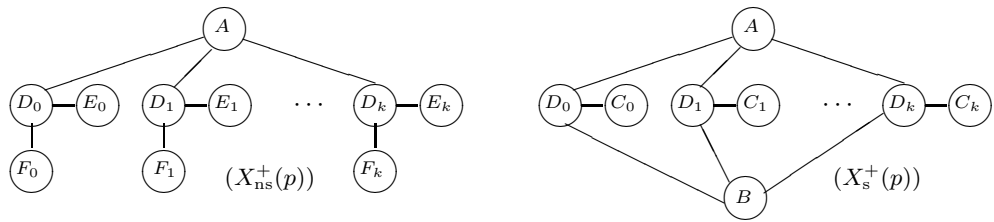

\subsubsection{${J}_{\mathrm{s}}^+ (p)$ has connected fibers}
 
\begin{propo}
Denote by ${\mathcal J}_{\mathrm{s}}^+ (p)$, for any prime $p$, the N\'eron model over $\Z_p^{\mathrm{ur}}$ of the Jacobian 
of $X_{\mathrm{s}}^+ (p)$. The component group ${\mathcal J}_{\mathrm{s}}^+ (p)/{\mathcal J}^{+,0}_{\mathrm{s}} (p)$ of the 
special fiber of ${\mathcal J}_{\mathrm{s}}^+ (p)$ is trivial. 
\end{propo}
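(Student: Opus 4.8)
The plan is to follow the recipe already used for Proposition~\ref{componentsXns}: by \cite{BLR}, Chapter~9.6, Corollary~3, the component group is computed from the intersection matrix of any regular model over $\Z_p^{\mathrm{ur}}$, and since the residue field is algebraically closed we obtain the full geometric component group directly, with no Galois action on components to worry about. I would work with the regular model of Theorem~\ref{thS+grossier}, in which every pair of meeting components crosses transversally at a single point (the local rings being $\Z_p^{\mathrm{ur}}[[X,Y]]/(X^a Y^b-p)$). Thus all off-diagonal entries of the intersection matrix $M$ lie in $\{0,1\}$, and the self-intersections are forced by $M\mathbf{d}=0$, where $\mathbf{d}$ is the vector of multiplicities. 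In particular each generic Drinfeld line has $D_i^2=-1$ and its attached exceptional line $C_i$ of multiplicity $(p+1)/2$ has $C_i^2=-2$.

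The key structural input is that the special fiber always contains the external Igusa component, call it $B$, of multiplicity $1$, meeting every Drinfeld component $D_i$. Since $\gcd$ of the multiplicities is therefore $1$, the multiplicity relation $M\mathbf{d}=0$ lets me discard $B$: writing $M''$ for the matrix obtained from $M$ by deleting the row and column of $B$, one checks $\Phi\cong\mathrm{coker}(M'')$, so triviality amounts to $\det M''=\pm1$. In $\mathrm{coker}(M'')$ the column relations of $C_i$ and $D_i$ read $e_{D_i}=2e_{C_i}$ and $e_{D_i}=e_A+e_{C_i}$ — the class $e_B$ having been deleted — whence $e_{C_i}=e_A$ and $e_{D_i}=2e_A$. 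Every generic chain therefore collapses onto the class $e_A$ of the central Igusa component, and contributes no torsion.

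It then remains, in each residue class of $p$ modulo $12$, to feed in the $A$-row relation together with the short chain(s) attached at $j\equiv 0$ and/or $j\equiv 1728$ prescribed by Theorem~\ref{thS+grossier}. A uniform phenomenon occurs: after the generic collapse the $A$-relation takes the form $c\,e_A + (\text{classes of the special end-components})=0$ with $c=A^2+2k\in\{-1,-2\}$ (the multiplicities balancing exactly, $k$ being the number of generic supersingular points in all four cases), while each short special chain forces $e_A$ to be a small integer multiple (typically $3$, via $e_A=3e_{C_0}$ or $e_A=3e_H$) of its end-component class. Substituting degenerates the relation to $-e_{(\text{end})}=0$, which kills the end classes, hence $e_A$, hence every generator, giving $\mathrm{coker}(M'')=0$. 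The main obstacle is thus not conceptual but bookkeeping: one must organize the four cases $p\equiv1,5,7,11\bmod12$, install the exceptional components at $j\equiv0$ and $j\equiv1728$ correctly (cf.\ Corollary~\ref{minimalXs+}), and verify that the final relation degenerates to kill $e_A$ in each case. The triviality is ultimately forced by the multiplicity-$1$ component $B$ joining all the chains, exactly as for $X_1(p)$ in \cite{CEdSt03}; this connecting reduced component is precisely what $X_{\mathrm{ns}}^+(p)$ lacks, explaining why its component group is nontrivial.
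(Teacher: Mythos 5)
Your proposal is correct, but it takes a genuinely different route from the paper's. The paper disposes of the proposition in one line: by Corollary~\ref{minimalXs+}, the minimal regular model of $X_{\mathrm{s}}^+(p)$ over $\Z_p^{\mathrm{ur}}$ has a reduced special fiber consisting of a \emph{single} irreducible component, and for a regular model whose special fiber is irreducible and reduced the component group is automatically trivial; the intersection-matrix computation is only mentioned there parenthetically, as a check on the minimal regular model with normal crossings. You instead run the Smith-normal-form recipe of Proposition~\ref{componentsXns} directly on the uncontracted model of Theorem~\ref{thS+grossier}: the reduction $\Phi\cong\mathrm{coker}(M'')$ obtained by deleting the row and column of the multiplicity-one component $B$ is the standard and correct manoeuvre (projection identifies $\ker(\deg)$ with $\Z^{I\setminus\{B\}}$, and the $B$-column lies in the span of the others by $M\mathbf{d}=0$), your collapse relations $e_{C_i}=e_A$, $e_{D_i}=2e_A$ are right, and the case-by-case bookkeeping does close up exactly as you predict: one finds $A^2=-(2k+1)$ for $p\equiv 1,5 \pmod{12}$ and $A^2=-2(k+1)$ for $p\equiv 7,11\pmod{12}$, so the $A$-relation degenerates to $e_A=e_G$ (resp.\ $e_A=e_{D_0}$), while the chain at $j\equiv 0$ gives $e_A=3e_H$, $e_G=2e_H$ (resp.\ $e_A=3e_{C_0}$, $e_{D_0}=2e_{C_0}$), killing every generator in all four congruence classes. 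What the paper's argument buys is brevity, at the price of resting on the Castelnuovo contraction analysis of Corollary~\ref{minimalXs+}; what yours buys is independence from that corollary, uniformity with the other component-group computations in the paper, and a transparent identification of the mechanism --- the reduced component $B$ meeting every chain --- which, as you observe, is precisely what $X_{\mathrm{ns}}^+(p)$ lacks and why its component group (Proposition~\ref{componentsXns+}) is nontrivial.
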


\begin{proof} This trivially follows from the fact that the minimal regular model has only one component (and is readily
checked by writing the intersection matrix for the minimal regular model with normal crossings). $\Box$

\end{proof}

{\footnotesize

}

\bigskip

\noindent
Bas Edixhoven\\
Mathematisch Instituut, Universiteit Leiden, Niels Bohrweg 1, 2333 CA Leiden, Nederland\\

\bigskip

\noindent 
Pierre Parent\\
Univ. Bordeaux, CNRS, Bordeaux INP, IMB, UMR 5251,  F-33400, Talence, France\\
Pierre.Parent@math.u-bordeaux.fr

\end{document}